\documentclass{amsart}

\usepackage{amssymb, amsmath,mathtools,mathabx}
\usepackage{mathrsfs}%stix}
\usepackage{amscd}
\usepackage{verbatim}
\usepackage{stmaryrd}
\usepackage[dvipsnames]{xcolor}
\usepackage{a4wide}
\usepackage{tikz}
\usetikzlibrary{shapes.geometric, arrows}
\usepackage{stmaryrd}

\usepackage{enumerate}

\mathtoolsset{showonlyrefs}
\numberwithin{equation}{section}
\usepackage{nicefrac}

\usepackage[colorlinks,linkcolor={blue},citecolor={blue},urlcolor={purple},]{hyperref}

\usepackage[colorinlistoftodos,prependcaption,textsize=tiny]{todonotes}

\usepackage{forest}
\usepackage{tikz}
\tikzset{>=latex}

\usepackage{tabularx}
\newcolumntype{L}{>{\arraybackslash}X}
\usepackage{multirow}

\theoremstyle{plain}
\newtheorem{theorem}{Theorem}[section]
\theoremstyle{remark}
\newtheorem{remark}[theorem]{Remark}

\theoremstyle{plain}
\newtheorem{corollary}[theorem]{Corollary}
\newtheorem{lemma}[theorem]{Lemma}
\newtheorem{proposition}[theorem]{Proposition}
\newtheorem{definition}[theorem]{Definition}

\newtheorem{assumption}[theorem]{Assumption}

\numberwithin{equation}{section}

\newcommand{\N}{\mathbb{N}}

\newcommand{\R}{\mathbb{R}}

\newcommand{\ceqq}{\coloneqq}
\newcommand{\eqqc}{\eqqcolon}
\newcommand{\col}{\colon}  
\newcommand{\pf}{p_{\kern-.5pt \alpha}}

\newcommand{\wk}{w_{\kappa}}
\newcommand{\wn}{w_{\nu}}
\newcommand{\wni}{w_{\nu_i}}

\newcommand{\calL}{\mathcal{L}}

\makeatletter
\newcommand{\plus}{%
  \DOTSB\mathop{\mathpalette\mattos@bigplus\relax}\slimits@
}
\newcommand\mattos@bigplus[2]{%
  \vcenter{\hbox{%
    \sbox\z@{$#1\sum$}%
    \resizebox{!}{0.9\dimexpr\ht\z@+\dp\z@}{\raisebox{\depth}{$\m@th#1+$}}%
  }}%
  \vphantom{\sum}%
}
\makeatother

\newcommand{\x}{{L_+}}

\newcommand{\Xtr}{X_{1-\frac{1+\kappa}{p},p}}
\newcommand{\Xtrzero}{X_{1-\frac{1}{p},p}}

\newcommand{\dd}{\,\mathrm{d}}

\newcommand{\wt}{\widetilde}

\renewcommand{\MR}{\mathrm{MR}}

\newcommand{\one}{{{\bf 1}}}

\newcommand{\lb}{\langle}
\newcommand{\rb}{\rangle}

\newcommand{\norm}[1]{{\left\vert\kern-0.25ex\left\vert\kern-0.25ex\left\vert #1
    \right\vert\kern-0.25ex\right\vert\kern-0.25ex\right\vert}}

\newcommand{\into}{\hookrightarrow}

\def\XXint#1#2#3{{\setbox0=\hbox{$#1{#2#3}{\int}$ }
\vcenter{\hbox{$#2#3$ }}\kern-.6\wd0}}

\newcommand{\zz}{z}

\allowdisplaybreaks

\begin{document}

\title[Maximal regularity with critical singular perturbations]{Maximal regularity for evolution equations with critical singular perturbations}

%%=============================================================%%
%% GivenName	-> \fnm{Joergen W.}
%% Particle	-> \spfx{van der} -> surname prefix
%% FamilyName	-> \sur{Ploeg}
%% Suffix	-> \sfx{IV}
%% \author*[1,2]{\fnm{Joergen W.} \spfx{van der} \sur{Ploeg} 
%%  \sfx{IV}}\email{iauthor@gmail.com}
%%=============================================================%%

\author{Esm\'ee Theewis}
\address[Esm\'ee Theewis]{Delft Institute of Applied Mathematics\\
Delft University of Technology \\ P.O. Box 5031\\ 2600 GA Delft\\The
Netherlands}
\email{e.s.theewis@tudelft.nl}
\author{Mark Veraar}
\address[Mark Veraar]{Delft Institute of Applied Mathematics\\
Delft University of Technology \\ P.O. Box 5031\\ 2600 GA Delft\\The
Netherlands.} \email{m.c.veraar@tudelft.nl} 

\thanks{The authors are supported by the VICI subsidy VI.C.212.027 of the Netherlands Organisation for Scientific Research (NWO)}

\begin{abstract}
Assuming $A$ has maximal $L^p$-regularity, this paper investigates perturbations of $A$ by time-dependent operators $B$ that are unbounded and satisfy a critical $L^q$-integrability condition in time. We establish two main results. The first proves maximal $L^p$-regularity for the critical endpoint case, generalizing previous work by Pr\"uss and Schnaubelt (2001).  The second develops a weighted maximal regularity theory for mixed-scale perturbations, motivated by the linearized skeleton equations appearing in large deviations theory for stochastic PDEs. 
\end{abstract}

\keywords{Maximal $L^p$-regularity, critical perturbations, skeleton equations, temporal weights, non-autonomous parabolic evolution equations, large deviations}

\subjclass[2020]{Primary: 35K90; Secondary: 34G10, 35B65, 46E35, 47A55, 47D06, 60F10}

\maketitle

\section{Introduction}

The theory of maximal $L^p$-regularity has for many years played an important role in the study of nonlinear evolution equations. For details, the reader is referred to  \cite{Am, DHP, HNVWvolume3, pruss2016moving} and the  surveys \cite{Denkintro, Monn24, Wilke23}. 
Recently, there have also been significant developments in the setting of stochastic evolution equations. An overview can be found in  \cite{AV25survey}. 

While this paper focuses exclusively on deterministic evolution equations, our primary motivation stems from large deviations theory for stochastic evolution equations. 
Establishing a large deviation principle requires the analysis of a nonstandard perturbed equation known as the skeleton equation (discussed below). A main objective of this paper is to prove maximal $L^p$-regularity for its linearized variant -- a problem that has previously been understood only in special cases.  

For $p=2$, within a variational setting, 
this was considered in our recent paper \cite[Cor.\ 3.5]{TV24} (see also the references therein). A similar problem was studied in \cite[\S4]{BGV24} to allow for certain singular coefficients in a variational setting. 
The linear part of the analysis in these papers treats the case where $A\col[0,T]\to \calL(X_1, X_0)$ is a {\em coercive} time-dependent operator in a variational framework. Here, we are given a Gelfand triple $(X_1, X_{\nicefrac12}, X_0)$ of Hilbert spaces, where $X_1\hookrightarrow X_0$ densely and continuously and $X_{\nicefrac12} = [X_0, X_1]_{\nicefrac12}$ is the complex interpolation space. From Lions' theory, it is well-known that $A$ has maximal $L^2$-regularity.

The linearized {\em skeleton equation} appearing in large deviations theory takes the form
\begin{align}\label{eq:intropert}
  u' + Au + Bu = f+ g,
\end{align} with $u(0) = u_0\in X_{\nicefrac12}$. Here, $B\in L^2(0,T;\calL(X_1, X_{\nicefrac12}))$, $f\in L^2(0,T;X_0)$ and $g\in L^1(0,T;X_{\nicefrac12})$. For details on the precise form of $B$, we refer to Subsection \ref{ss:intro2}. 
The perturbation $B$ is singular in time, as $\|B(\cdot)\|_{\calL(X_1, X_{\nicefrac12})}$ is allowed to be unbounded. Moreover, the perturbation is critical: the space-time Sobolev indices of the leading term $Au$ and the perturbation $Bu$ coincide since $-\frac12 + 0 = -\frac12$ and $-\frac{1}{1} + \frac12 = - \frac12$, respectively. \footnote{In general the space-time Sobolev index of $v\in  L^p(0,T;X_{\theta})$ is $-\frac{1}{p} + \theta$.} 
Thus, $B$ is a borderline perturbation regarding the space-time Sobolev index. 
Consequently, it is not obvious that the perturbed operator $A+B$ preserves maximal $L^2$-regularity.

In \cite[\S3]{TV24},  we developed new techniques to establish maximal $L^2$-regularity of $A+B$, relying on the coercivity of $A$. In the current paper, we also cover the {\em non-coercive} case in Section \ref{sec:borderline}, thus enabling  applications to   large deviations in $L^2$-settings where the leading operator is not necessarily coercive.  

To develop an $L^p$-theory of large deviations with $p\in (2, \infty)$, we require an $L^p$-variant of the aforementioned results. In this setting, the endpoint $L^1$ case is no longer required, allowing us to use a different approach compared to previous work. 
Indeed,  replacing $L^2$ by $L^p$ with $p\in(2,\infty)$ in the above setting, the natural condition on $B$ turns out to be $B\in L^2(0,T;\calL(X_1,X_{\nicefrac12}))$, and thus $Bu\in L^r(0,T;X_{\nicefrac12})$ with $\frac{1}{p}+\frac12 = \frac1r$ and $r\in (1, 2)$. Here, the space-time Sobolev indices of $Au$ and $Bu$ still coincide (both are $-\frac1p$), showing that $B$ is in this sense a critical perturbation of $A$.

While the literature on perturbation results is vast, most works are restricted to the autonomous setting. For time-dependent and singular perturbations, results on maximal $L^p$-regularity are scarce. The works most related to our setting include \cite{ABDH, prussschnaubelt01}.

In this paper, we obtain two perturbation theorems for maximal $L^p$-regularity in an abstract evolution equation setting: 
\begin{enumerate}[\rm (1)]
\item A generalization of the perturbation result on maximal $L^p$-regularity obtained in \cite[Th.\ 3.1]{prussschnaubelt01}. 
\item A maximal $L^p$-regularity theory for linear skeleton equations as they appear in large deviations theory, and generalizations thereof.  
\end{enumerate}
    The starting point will always be a time-dependent operator family $A$ which has maximal $L^p$-regularity on a Banach space $X_0$. The operator $B$ will be a lower order  
    perturbation which is singular in time, and which satisfies a borderline integrability condition. 
While our two perturbation results are related, their strongest versions are obtained by treating the settings separately. This separation also benefits the exposition, as the first result involves fewer technical complexities.  

In the next two subsections, we state special cases of the main results. 
\subsection{Setting with full maximal $L^p$-regularity}
Suppose that the following assumption holds:
\begin{assumption}\label{assump:XA}
Let $T\in (0,\infty)$. 
Let $X_0$ and $X_1$ be Banach spaces such that $X_1\hookrightarrow X_0$ densely and continuously. 
Suppose that $A\col[0,T]\to \calL(X_1, X_0)$ is strongly measurable in the strong operator topology. 
\end{assumption}
Note that we allow   $\|A(\cdot)\|_{\calL(X_1, X_0)}$ to be unbounded in time. 
Given that $A$ has maximal $L^p$-regularity for a given $p\in (1, \infty)$ (see Definition \ref{def:MR}), we want to find conditions under which $A+B$ has maximal $L^p$-regularity as well, where $B$ is a singular lower order perturbation. 

\begin{theorem}\label{thm:MRI}
Suppose that Assumption \ref{assump:XA} holds. Let $p\in (1, \infty)$ and suppose that there exists a constant $C_A$ such that 
\[
\|Av\|_{L^p(0,T;X_0)}\leq C_A ( \|v\|_{L^p(0,T;X_1)}+\|v\|_{W^{1,p}(0,T;X_0)}), \quad v\in L^p(0,T;X_1)\cap W^{1,p}(0,T;X_0). 
\]
Suppose that $A$ has maximal $L^p$-regularity. Let  $q\in (p, \infty)$ and let $B\col[0,T]\to \calL(X_{1-\nicefrac{1}{q},1},X_0)$ be strongly measurable in the strong operator topology. Suppose that \[\|B(t)\|_{\calL(X_{1-\nicefrac{1}{q},1},X_0)}\leq b(t), \qquad t\in [0,T],\]
where $b\in L^q(0,T)$. Then $A+B$ has maximal $L^p$-regularity.
\end{theorem}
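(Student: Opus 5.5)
We set up the problem as a fixed point. By Definition \ref{def:MR}, maximal $L^p$-regularity of $A$ means the following. For every $f\in L^p(0,T;X_0)$ and $u_0$ in the trace space $X_{1-\nicefrac1p,p}$, the problem $u'+Au=f$, $u(0)=u_0$, has a unique solution $u\in \MR(0,T)\ceqq W^{1,p}(0,T;X_0)\cap L^p(0,T;X_1)$. Moreover, $\|u\|_{\MR(0,T)}\le K(\|f\|_{L^p(X_0)}+\|u_0\|_{\Xtrzero})$.

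Solving $u'+(A+B)u=f$ amounts to finding a fixed point of $u\mapsto \mathcal{S}(f-Bu,u_0)$, where $\mathcal{S}$ is the solution operator of $A$. It therefore suffices to show that $v\mapsto \mathcal{S}(-Bv,0)$ is a strict contraction on suitable short intervals, and then to patch the intervals together. The assumed bound $\|Av\|_{L^p(X_0)}\le C_A\|v\|_{\MR}$ guarantees that $A+B$ maps $\MR$ into $L^p(X_0)$ once we know $Bv\in L^p(X_0)$. This makes the notion of a solution meaningful, and it is also needed to show that maximal regularity of $A$ on $[0,T]$ passes to subintervals $[s,T]$ with arbitrary initial data. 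To get that restriction property, we extend the data and subtract a suitable lift of the initial value. Constants stay uniform in $s$, since we may extend to $[0,T]$ and use the bound for $A$.

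The key estimate is an embedding. For $v\in \MR(0,T)$ with $v(0)=0$ (extended by zero), the mixed-derivative/trace embedding gives
\[
\MR(0,\tau) \into L^{r}(0,\tau;X_{1-\nicefrac1q,1}),
\]
for some $r$ with $\frac1r=\frac1p-\frac1q$. The point is that $\MR\into H^{\theta,p}(X_{1-\theta,1})$ holds, up to the real-interpolation index $1$, which one handles via $W^{\theta,p}$ with a slightly perturbed parameter. Sobolev embedding in time with $\theta=\nicefrac1q$ then yields $L^r$ where $\frac1r=\frac1p-\frac1q$. The difficulty is that at the exact endpoint the real index $1$ and the critical Sobolev exponent together are borderline. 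I would prove $\|v\|_{L^r(X_{1-\nicefrac1q,1})}\le C\|v\|_{\MR}$ with constant independent of $\tau$ for zero initial value. To do this, I would write $X_{1-\nicefrac1q,1}$ via the $K$-method or a trace-type representation: $\|x\|_{X_{1-\nicefrac1q,1}}\lesssim \int_0^\infty t^{-\nicefrac1q}\ldots$. Alternatively, I would use the semigroup-free characterization $\|v(t)\|_{X_{1-\nicefrac1q,1}}\lesssim \int_0^t s^{-\nicefrac1q}\,(\ldots)\,ds$ and apply a Hardy/Hardy–Littlewood–Sobolev inequality in time, which is exactly critical with $\frac1r=\frac1p-\frac1q$. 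This step I expect to be the main obstacle. It must produce a $\tau$-independent constant, and it needs the strict inequality $q>p$ to make the fractional integration of order $\nicefrac1q$ map $L^p\to L^r$ with $r<\infty$.

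Granting this, Hölder's inequality gives $\|Bv\|_{L^p(s,s+\tau;X_0)}\le \|b\|_{L^q(s,s+\tau)}\|v\|_{L^r(X_{1-\nicefrac1q,1})}\le C\|b\|_{L^q(s,s+\tau)}\|v\|_{\MR}$. By absolute continuity of the integral, we can partition $[0,T]$ into finitely many intervals $I_j$ such that $CK\|b\|_{L^q(I_j)}\le\frac12$. Here we use the fact that $\|b\|_{L^q(I)}$ is small whenever $|I|$ is small; this holds because $q<\infty$. On each $I_j$ we solve by a contraction argument, applied to the difference from the solution with the lifted initial value, so that the embedding is only used for functions vanishing at the left endpoint. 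The inhomogeneous initial part $w=\mathcal{S}(0,u_j)$ also satisfies $\|Bw\|_{L^p}<\infty$. For this we need $\MR\into L^r(X_{1-\nicefrac1q,1})$ with a constant that may depend on the interval; we obtain it from the same embedding after extension. Concatenating the solutions, using continuity of $u$ in the trace space at the endpoints, gives existence. Uniqueness follows in the same way, interval by interval. The a priori estimate follows by summing the finitely many local bounds. Hence $A+B$ has maximal $L^p$-regularity.
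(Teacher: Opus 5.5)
Your overall architecture is the same as the paper's (Steps 1--3 of the proof of Theorem~\ref{thm:perturbLp}). You partition $[0,T]$ into finitely many intervals on which $\|b\|_{L^q}$ is small. You subtract the solution of the homogeneous problem carrying the trace $u(s_j)\in\Xtrzero$ (Lemma~\ref{lem:initial}), run a contraction for the remainder, which vanishes at the left endpoint, glue using continuity in the trace space, and get uniqueness interval by interval. Bounding $Bw$ for the homogeneous part with an interval-dependent constant is harmless here, because Theorem~\ref{thm:MRI} does not claim that the constant depends on $b$ only through $\|b\|_{L^q(0,T)}$. The paper instead concatenates $u|_{[0,\tau_n]}$ with the homogeneous solution into a function vanishing at $0$. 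That gives uniform constants and is what makes the weighted case and the $\|b\|_{L^q}$-dependence work.

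The genuine gap is the key estimate $\|v\|_{L^{r}(0,\tau;X_{1-\nicefrac1q,1})}\le C\|v\|_{\MR^p(0,\tau)}$ for $v(0)=0$. This is the only place where the critical condition $b\in L^q$ is used, and you leave it open: you call it the main obstacle and list candidate routes without carrying one out. The mixed-derivative route into $H^{\theta,p}$ relies on UMD-type structure (cf.\ \eqref{eq:Hdp interpol}), which is not assumed. Even then it yields complex interpolation spaces, which are larger than $X_{1-\theta,1}$. The Hardy--Littlewood--Sobolev route is not specified.

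The missing idea is that neither the index $1$ nor the endpoint is a real obstruction. For any interpolation couple $\|x\|_{(Y_0,Y_1)_{\theta,1}}\le C_\theta\|x\|_{Y_0}^{1-\theta}\|x\|_{Y_1}^{\theta}$, and by reiteration $X_{1-\nicefrac1q,1}=(\Xtrzero,X_1)_{1-\nicefrac pq,1}$. Take $r=\frac{pq}{q-p}$, so that $r(1-\frac pq)=p$. Then
\[
\|v\|_{L^r(0,\tau;X_{1-\nicefrac1q,1})}\lesssim \sup_{t\in[0,\tau]}\|v(t)\|_{\Xtrzero}^{\nicefrac pq}\,\|v\|_{L^p(0,\tau;X_1)}^{1-\nicefrac pq}\lesssim\|v\|_{\MR^p(0,\tau)},
\]
where the last step is the trace embedding \eqref{eq:traceembedding0T}, whose constant is independent of $\tau$ for functions vanishing at $0$. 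So the time integrability comes from putting the trace factor in $L^\infty$ and the $X_1$ factor in $L^p$; no fractional Sobolev embedding or Hardy inequality is needed. The condition $q>p$ enters through $1-\frac pq\in(0,1)$; for $q=p$ one must take $\Xtrzero$ itself, as in Theorem~\ref{thm:perturbGronwall}. With this estimate in place, the rest of your argument goes through.
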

The above theorem is a special case of Theorem \ref{thm:perturbLp}. 
The main difficulty in the proof of the above is that $B$ is a singular perturbation which is also critical in the sense that the $L^q$-integrability condition on $b$ is optimal. 
Indeed, by trace theory, the maximal regularity space $L^p(0,T;X_1)\cap W^{1,p}(0,T;X_0)$ embeds into $L^{s}(0,T;X_{1-\nicefrac{1}{q},1})$ with $\frac1p = \frac{1}{q} + \frac{1}{s}$ (see \cite[Th.\ L.4.4]{HNVWvolume3}).  
Combined with H\"older's inequality, we see that  
\[\|B u \|_{L^p(0,T;X_0)}\leq \|b\|_{L^q(0,T)} \|u\|_{L^{s}(0,T;X_{1-\nicefrac{1}{q},1})}\lesssim \|b\|_{L^q(0,T)} \|u\|_{L^p(0,T;X_1)\cap W^{1,p}(0,T;X_0)},\]
whenever $u$ belongs to the maximal regularity space. This shows that the integrability condition on $b$ is natural, but also critical. Note that $X_{1-\nicefrac{1}{q},1}$ can be replaced by the complex interpolation space $[X_0, X_1]_{1-\nicefrac1q}$ or any intermediate space $X_{1-\nicefrac1q}$ which is of class $J_{1-\nicefrac{1}{q}}$ between $X_0$ and $X_1$  (see \cite[Chap.\ 1]{Lun}). Indeed, this follows from $X_{1-\nicefrac{1}{q},1}\hookrightarrow X_{1-\nicefrac1q}$. 

The limiting case $q=p$ can also be included if the space  $X_{1-\nicefrac{1}{q},1}$ is replaced by the trace space $X_{1-\frac{1}{p},p}$. The latter case is easier to prove and is  presented in Theorem \ref{thm:perturbGronwall}.

Our findings generalize the perturbation result of \cite[Th.\ 3.1]{prussschnaubelt01}. Therein, maximal $L^p$-regularity of $A+B$ is also established under the condition  $\|B(\cdot)\|_{\calL(X_{1-\nicefrac{1}{q},1},X_0)}\leq b(\cdot)$, but it is required that $b \in L^r(0,T)$ with $r > q$. We address the critical endpoint $r=q$.  
Other differences between our result and \cite[Th.\ 3.1]{prussschnaubelt01} are 
that we do not assume continuity of $A$ and we include temporal weights to cover rough initial data.

\subsection{Setting beyond maximal $L^p$-regularity}\label{ss:intro2}

In this section, we state a special case of our second main result. This setting is designed specifically to cover the linear maximal regularity theory required for our follow-up work on nonlinear skeleton equations and their applications to large deviations.  
In the introduction, for simplicity, we only consider the case where $A$ is time-independent, and refer for the case of time-dependent $A$ to Theorem \ref{th: well-posed linear}. 

In the weak convergence approach \cite{BudhiDup} to large deviations for SPDE (see \cite{HLL,TV24} and references therein), a multiplicative stochastic term $\mathcal{B}(t) u(t) \mathrm{d} W(t)$, where $\mathcal{B}(t)\col X_1\to X_{\nicefrac12}$ is replaced by $\mathcal{B}(t)u(t) \phi(t) \mathrm{d} t$ with $\mathcal{B}\in L^\infty(0,T;\calL(X_1, X_{\nicefrac12}))$ and $\phi\in L^2(0,T)$. Here $W$ could be a standard Brownian motion, or (with minor modifications to $\mathcal{B}$ and $\phi$),  a cylindrical Brownian motion. It turns out that we need to understand maximal regularity for the problem
\begin{align}\label{eq:intropertLDP}
u'(t) + A u(t)+ B(t) u(t) = f(t)+g(t),
\end{align}
with $u(0) = u_0$, and where $B(t) \ceqq \mathcal{B}(t)\phi(t)$. Nonlinear terms can be included afterwards by linearization techniques. Thus, we first focus on the leading order linear terms. 
Typically, one has $B\in L^2(0,T;\calL(X_1, X_{\nicefrac12}))$, where $X_{\nicefrac12}$ could be the real interpolation space $(X_0, X_1)_{\nicefrac12,q}$ (with $q\in [1, \infty]$) or the complex interpolation space $[X_0, X_1]_{\nicefrac12}$. As mentioned before in \eqref{eq:intropert}, such a $B$ is a critical perturbation. 

Now if $A$ has maximal $L^p$-regularity, then we need to prove that for $f\in L^p(0,T;X_0)$ and $g\in L^{r}(0,T;X_{\gamma})$ with $\frac{1}{r} = \frac1p+\frac12$, the problem above has a solution $u\in L^p(0,T;X_1)$. For stochastic problems, we need $p\in [2,\infty)$, so that $r\in [1, 2)$. The case $p=2$, $r=1$ is an exceptional endpoint, which was already  mentioned in \eqref{eq:intropert}, but we return to it in Section \ref{sec:borderline}. Below, we focus on $p\in (2, \infty)$ for convenience. 

There is a connection to Theorem \ref{thm:MRI} in the case $q=2$. However, a difference is that the solution to \eqref{eq:intropertLDP} cannot be in $W^{1,p}(0,T;X_0)$. Indeed, $B u$ and $g$ are not assumed to be in $L^p(0,T;X_0)$, but only in $L^{r}(0,T;X_{\gamma})$. Thus, the time regularity has to be described in a different way. The best strategy seems to be the use of a sum of maximal regularity spaces for $u$: 
\begin{equation}\label{eq:MRspaceintro}
  \Big(L^p(0,T;X_1)\cap W^{1,p}(0,T;X_0)\Big)+\Big(L^r(0,T;X_{\nicefrac32})\cap W^{1,r}(0,T;X_{\nicefrac12})\Big). 
\end{equation}
Indeed, since $Bu$ and $g$ are in $L^r(0,T;X_{\nicefrac12})$, the above appears to be the optimal space for $u$ if we apply both  maximal $L^p$-regularity and maximal $L^r$-regularity to the respective  parts of the equation. Note that the space-time Sobolev indices of the components in the sum above coincide: $1-\frac1p = \frac{3}{2} - \frac1r$. We prove that this sum maximal regularity space still embeds into
\[L^p(0,T;X_1)\cap C([0,T];X_{1-\nicefrac1p,p}),
\]
which is a typical regularity class for stochastic evolution equations.  

We highlight here the following special case of  Theorem \ref{th: well-posed linear} and Corollary \ref{cor:time-indep}:
\begin{theorem}\label{thm:introhalfB}
Suppose that Assumption \ref{assump:XA} holds, and that $A$ is time-independent. Let $p\in (2, \infty)$ and suppose that $A$ has maximal $L^p$-regularity. Let $\frac{1}{r} = \frac1p+\frac12$ and suppose that $B\col[0,T]\to \calL(X_{1},X_{\nicefrac12})$ is strongly measurable in the strong operator topology, and suppose that \[\|B(t)\|_{\calL(X_{1},X_{\nicefrac12})}\leq b(t), \qquad t\in [0,T],\]
where $b\in L^2(0,T)$. Then for all $u_0\in X_{1-\nicefrac1p,p}$, $f\in L^p(0,T;X_0)$ and $g\in L^r(0,T;X_{\nicefrac12})$, there exists a unique strong solution $u$ to \eqref{eq:intropertLDP} in the space \eqref{eq:MRspaceintro}. Moreover, $u\in C([0,T];X_{1-\nicefrac1p,p})$.  
\end{theorem}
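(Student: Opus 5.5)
Since $A$ is time-independent and has maximal $L^p$-regularity on $X_0$, I would first transfer this to the shifted scale $X_{\nicefrac12}\subset X_0$ with domain $X_{\nicefrac32}$, where $A$ is the part (or extrapolation) of $A$ in $X_{\nicefrac12}$. For time-independent $A$, maximal $L^p$-regularity is independent of $p$ and of the interpolation level (Dore's theorem), so $A$ also has maximal $L^r$-regularity on $X_{\nicefrac12}$. This gives the solution operator
\[
\mathcal{R}\col (u_0,f,g)\mapsto \mathcal{R}_p(u_0,f)+\mathcal{R}_r(g),
\]
where $\mathcal{R}_p$ solves $v'+Av=f$, $v(0)=u_0$, in $\MR_p\ceqq L^p(0,T;X_1)\cap W^{1,p}(0,T;X_0)$, and $\mathcal{R}_r$ solves $w'+Aw=g$, $w(0)=0$, in $\MR_r\ceqq L^r(0,T;X_{\nicefrac32})\cap W^{1,r}(0,T;X_{\nicefrac12})$. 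The solution space is then $E_T\ceqq\MR_p+\MR_r$ with the sum norm.

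Next I need embeddings. Since $\MR_p\into L^p(0,T;X_1)\cap C([0,T];X_{1-\nicefrac1p,p})$ is standard, the task is to show the same for ${}_0\MR_r$ (zero initial value), with constants uniform in $T$. For continuity I would use the trace embedding $\MR_r\into C([0,T];(X_{\nicefrac12},X_{\nicefrac32})_{1-\nicefrac1r,r})$. By reiteration the latter space is $(X_0,X_1)_{\nicefrac32-\nicefrac1r,r}=(X_0,X_1)_{1-\nicefrac1p,r}\into X_{1-\nicefrac1p,p}$, since $r<p$. For the $L^p(X_1)$ bound I would use the mixed-derivative embedding $\MR_r\into H^{\nicefrac12,r}(0,T;X_1)$ followed by the Sobolev embedding $H^{\nicefrac12,r}\into L^p$, which is valid because $\frac12-\frac1r=-\frac1p$. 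If mixed derivatives are delicate without bounded imaginary powers, I would instead use real interpolation, $\MR_r\into B^{\nicefrac12}_{r,r}(0,T;(X_{\nicefrac12},X_{\nicefrac32})_{\nicefrac12,r})$, together with $B^{\nicefrac12}_{r,r}\into L^p$ and $X_{1,r}\into X_1$. This requires $r\le p$, which holds.

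With these embeddings in hand, the key estimate is
\[
\|Bu\|_{L^r(0,T;X_{\nicefrac12})}\le \|b\|_{L^2(0,T)}\|u\|_{L^p(0,T;X_1)},
\]
which follows from Hölder. The difficulty is that $\|b\|_{L^2}$ need not be small, and the perturbation is critical, so it cannot be absorbed by shrinking $T$ in terms of a power of $T$. I would argue as follows. First partition $[0,T]$ into finitely many intervals $I_k=[t_{k-1},t_k]$ with $\|b\|_{L^2(I_k)}\le\varepsilon$; this is possible because $b^2$ is integrable and hence its integral is absolutely continuous. On each interval, the map $u\mapsto \mathcal{R}(u(t_{k-1}),f,g-Bu)$ is a contraction on $E_{I_k}$. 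Its contraction constant is $\varepsilon$ times the embedding constant times the norm of $\mathcal{R}_r$, and these constants must be uniform over subintervals. I would secure this uniformity by working with zero initial data and extension operators, or by using the constants on $(0,T)$ for functions extended by zero. The initial value $u(t_{k-1})$ lies in $X_{1-\nicefrac1p,p}$ by the continuity embedding, so the restart is admissible. Gluing the pieces gives a solution in $E_T$, because sums of maximal-regularity functions on adjacent intervals glue when the traces match at the nodes. Uniqueness follows in the same way, interval by interval, from the contraction.

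The main obstacle I anticipate is exactly this uniformity of the embedding constants of ${}_0\MR_r$ and $\MR_p$ under restriction to short intervals, together with the gluing in the sum space. The sum decomposition is not unique, so at the nodes one must choose decompositions and show that the concatenated function belongs to $E_T$. To handle this, I would place the restart data entirely in the $\MR_p$ part, since traces of $\MR_p$ are exactly $X_{1-\nicefrac1p,p}$. The $\MR_r$ parts always start from $0$, and they concatenate by extension by zero followed by the shifted semigroup solution, which itself lies in $\MR_p$.
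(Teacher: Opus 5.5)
Your overall architecture is sound, and in this unweighted autonomous setting it is close in spirit to what the paper does via Corollary~\ref{cor:time-indep} and Theorem~\ref{th: well-posed linear}. That architecture is: cut $[0,T]$ into pieces with $\|b\|_{L^2(I_k)}$ small, contract using zero-initial-data constants that are uniform in the interval length, glue by continuing the $\MR_r$-part with the semigroup (legitimate since $(X_0,X_1)_{1-\nicefrac1p,r}\into X_{1-\nicefrac1p,p}$), and get uniqueness from $w'=-Aw\in L^p(0,T;X_0)$. The paper contracts in $L^p(0,\tau;X_1)$ and glues via uniqueness in $\MR_{\rm tr}$. The genuine gap is in the step you treat as routine, namely
\[
L^r(0,T;X_{\nicefrac32})\cap W^{1,r}(0,T;X_{\nicefrac12})\into L^p(0,T;X_1).
\]
This is exactly where criticality bites, and the contraction (via $\|Bu\|_{L^r(X_{\nicefrac12})}\le \|b\|_{L^2}\|u\|_{L^p(X_1)}$) and your uniqueness argument both rest on it.

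Your first route identifies $[L^r(X_{\nicefrac32}),W^{1,r}(X_{\nicefrac12})]_{\nicefrac12}$ with $H^{\nicefrac12,r}(X_1)$. That needs UMD-type structure, which Theorem~\ref{thm:introhalfB} does not assume: $X_0$ is an arbitrary Banach space and $X_{\nicefrac12}$ may be a real interpolation space. The paper notes this shortcut only under UMD, in the final remark of Section~\ref{sec:linear adm}.

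Your fallback fails twice.
\begin{itemize}
\item $(X_{\nicefrac12},X_{\nicefrac32})_{\nicefrac12,r}\into X_1$ is false for $r>1$ in general; only the $(\cdot,\cdot)_{\nicefrac12,1}$-space embeds. For example, with $X_0=L^q$, $X_1=W^{2,q}$ and the complex scale, this space is $B^2_{q,r}$, which does not embed into $W^{2,q}$ when $q<r$.
\item At integrability $r$ you only have $W^{1,r}\into B^1_{r,\infty}$ and $L^r\into B^0_{r,\infty}$. The Besov mixed-derivative theorem then gives $B^{\nicefrac12}_{r,\infty}(0,T;X_{1,1})$, not $B^{\nicefrac12}_{r,r}$. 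In fact $B^{\nicefrac12}_{r,r}$ fails for $r<2$: in the example above, $\sum_{m\le N}2^{-3m}e^{i(2^{2m}t+2^m x)}\phi(t)\psi(x)$ has $\MR_r$-norm $\sim N^{1/2}$ but $B^{\nicefrac12}_{r,r}(B^2_{q,r})$-norm $\sim N^{1/r}$.
\item At the critical exponent $B^{\nicefrac12}_{r,\infty}\not\into L^p$. There is no slack in the exponents, so the fine (microscopic) indices must be recovered some other way.
\end{itemize}

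The missing idea is the microscopic improvement of Sobolev embeddings in the $F$-scale when integrability strictly increases (Jawerth--Franke type, \cite[Th.\ 1.2]{MV12sharp}). It is used twice, through an intermediate $\hat r\in(r,p)$, as in Proposition~\ref{prop: emb Lr}. Set $s=\frac1{\hat r}-\frac1r<0$.
\begin{itemize}
\item First, $W^{1,r}(\R;X_{\nicefrac12})\into F^1_{r,\infty}\into B^{1+s}_{\hat r,\hat r}(\R;X_{\nicefrac12})$ and $L^r(\R;X_{\nicefrac32})\into F^0_{r,\infty}\into B^{s}_{\hat r,\hat r}(\R;X_{\nicefrac32})$.
\item Next, the Besov mixed-derivative theorem \cite[Th.\ 3.1]{MV14traces} gives $B^{s+\nicefrac12}_{\hat r,\hat r}(\R;X_{1,1})$; it lands in the $(\cdot,\cdot)_{\theta,1}$-space, which is what removes the interpolation-parameter problem.
\item Finally, $B^{s+\nicefrac12}_{\hat r,\hat r}=F^{s+\nicefrac12}_{\hat r,\hat r}\into F^0_{p,1}\into L^p$, since $s+\frac12-\frac1{\hat r}=-\frac1p$, and $X_{1,1}\into X_1$.
\end{itemize}
Apply this after an extension operator, with constants uniform for functions vanishing at the left endpoint. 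With that embedding in place, the rest of your argument goes through. Note that the real difficulty is this embedding, not the uniformity and gluing issues you flagged as the main obstacle.
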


In the main text we also cover the more general case $B(t)\in \calL(X_{1},X_{\gamma})$, where again $X_{\gamma}$ could, for instance, be any complex or real interpolation space. 

Finally,  the limiting case $r=1$ is considered in Section \ref{sec:borderline}. 
In particular, we present a version of Theorem \ref{thm:introhalfB} with $p=2$, where $X_{\nicefrac12}$ is the real interpolation space $(X_0, X_1)_{\nicefrac12,2}$. 

Table \ref{tab:perturbation_lp} summarizes our perturbation results in a special case, without temporal weights. In the table, we denote $I=(0,T)$.   
\begin{table}[ht]
    \centering
    \caption{Maximal $L^p$-regularity of $A+B$} 

    \label{tab:perturbation_lp}
    \renewcommand{\arraystretch}{1.5} % Increases row height for readability
    \begin{tabular}{|l|l|l|l|l|}
        \hline
         & Perturbation $B$ 
         & $\|B(\cdot)\| \in L^q$ & Inhomogeneity $f$ & Solution $u$ \\ [2.5pt] 
        \hline
        Th.\ \ref{thm:perturbLp} & $X_{\frac{1}{q'}, 1} \to X_0$ & $q \in (p, \infty)$ & $L^p(I;X_0)$ & $L^p(I;X_1)\cap W^{1,p}(I;X_0)$ \\ [2.5pt] 
        \hline
        Th.\ \ref{thm:perturbGronwall} & $X_{\frac{1}{p'}, p} \to X_0$ & $q = p$ & $L^p(I;X_0)$ & $L^p(I;X_1)\cap W^{1,p}(I;X_0)$ \\ [2.5pt] 
        \hline
        Th.\ \ref{th: well-posed linear} & $X_1 \to X_{\frac{1}{q}, \infty}$ & $\frac{1}{q} = \frac1r-\frac1p$ & $L^p(I;X_0) + L^r(I;X_{\frac1q,\infty})$ & $L^p(I;X_1)\cap C(I;X_{\frac1{p'},p})$ \\ [2.5pt] 
        \hline
        Th.\ \ref{thm:mainHilbert} & $X_1 \to X_{\frac{1}{p'},p}$ & $\frac{1}{q} = 1-\frac1p$ & $L^p(I;X_0) + L^1(I;X_{\frac1{p'},p})$ & $L^p(I;X_1)\cap C(I;X_{\frac1{p'},p})$ \\ [2.5pt] 
        \hline
    \end{tabular}
\end{table}

\subsubsection*{Acknowledgements}
The authors thank Emiel Lorist for pointing out the simple proof of Lemma \ref{lem:L2real}.

\section{Perturbations of maximal $L^p$-regularity}\label{sec:PS extension}

In this section, we establish a new perturbation result for maximal $L^p$-regularity, which implies Theorem \ref{thm:MRI}. We begin by outlining the necessary preliminaries regarding maximal $L^p$-regularity for operator families $A$ that are singular in time.

\subsection{Maximal $L^p$-regularity}\label{sub: prelims max Lp reg}

Let $X_0$ and $X_1$ be Banach spaces such that $X_1\hookrightarrow X_0$ densely and continuously. 
Suppose that $A\col[0,T]\to \calL(X_1, X_0)$ is strongly measurable in the strong operator topology. 
In contrast to standard literature, we allow the map $t\mapsto \|A(t)\|_{\calL(X_1, X_0)}$ to be unbounded, a level of generality required for Theorem \ref{thm:MRI}.

For $f\in L^1(0,T;X_0)$, we consider the problem
\begin{equation}
\label{eq:parabolic_problem_prel}
\left\{
\begin{aligned}
    u' + A u &= f, \\
    u(0) &= u_0.
\end{aligned}
\right.
\end{equation}
A function $u\in L^1(0,T;X_1)\cap W^{1,1}(0,T;X_0)$  is called a {\em strong solution} to \eqref{eq:parabolic_problem_prel} if $A u\in L^1(0,T;X_0)$ and for all $t\in [0,T]$, 
\[u(t) = u_0-\int_0^t A(s) u(s) ds + \int_0^t f(s) ds.\]
Note that in this case $u\in C([0,T];X_0)$ and this is the version of $u$ that we will use from now on. Clearly, this definition can be extended to other intervals $(a,b)$. 

To relax the regularity conditions on the initial data, we employ power-weighted $L^p$-spaces. We adopt the following notation. For $p\in (1, \infty)$ and $\kappa\in [0,p-1)$, let $\wk(t)\ceqq |t|^\kappa$ and let $L^p(a,b,\wk;X_0)$ denote the space of all strongly measurable $f\col(a,b)\to X_{0}$ such that 
\begin{align*}
\|f\|_{L^p(a,b,\wk;X_0)}\ceqq \Big(\int_a^b t^\kappa \|f(t)\|_{X_0}^p\dd t\Big)^{1/p}<\infty
\end{align*} 
One can check that $L^p(a,b,\wk;X_0)\hookrightarrow L^1(a,b;X_0)$. The space $W^{1,p}(a,b,w_{\kappa};X_0)$ is defined in a similar way.

In the sequel, for $0\leq a<b\leq \infty$, we will use the shorthand notation 
\begin{equation}\label{eq:def MRp}
\MR^p(a,b,w_{\kappa}) \ceqq  L^p(a,b,w_{\kappa};X_1)\cap W^{1,p}(a,b,w_{\kappa};X_0).
\end{equation}
The above space will be equipped with the norm 
\[\|u\|_{\MR^p(a,b,w_{\kappa})} \ceqq  \max\{\|u\|_{L^p(a,b,w_{\kappa};X_1)},\|u\|_{W^{1,p}(a,b,w_{\kappa};X_0)}\},\]
where we emphasize that we do not shift the weight. When $\kappa=0$, we will write $\MR^p(a,b)$. Note that for $a>0$, one has $\MR^p(a,b,w_{\kappa}) = \MR^p(a,b)$ isomorphically since the weight is bounded from above and below. 

We only consider $\kappa\in [0, p-1)$. 
The lower bound on $\kappa$ is motivated by the goal of enlarging the space of initial values by including the power weight.  
The upper bound on $\kappa$ ensures that the functions  we consider are integrable on $[0,T]$.

Recall from \cite[Cor.\ L.4.6]{HNVWvolume3} that the following trace embedding results hold:
\begin{align}
\label{eq:traceembedding0T} \MR^p(0,T,w_{\kappa})& \hookrightarrow C([0,T];X_{1-\frac{1+\kappa}{p},p}),
\\
\label{eq:traceembeddingaT} \MR^p(0,T,w_{\kappa})& \hookrightarrow C_{\nicefrac{\kappa}{p}}((0,T];X_{1-\frac{1}{p},p}),
\end{align}
where the embedding constants can be taken independent of $T$ if one restricts to functions that vanish at $t=0$. Here,  $C_{\nicefrac{\kappa}{p}}((0,T];Y)$ is the space of continuous functions   $u\col (0,T]\to Y$ for which $\sup_{t\in (0,T]}t^{\nicefrac{\kappa}{p}}\|u(t)\|_{Y}<\infty$.

Although $A$ is allowed to be unbounded in time, we will often suppose that there is a constant $C_A$ such that 
\begin{align}\label{eq:bddcondLpA}
\|Av\|_{L^p(0,T,w_{\kappa};X_0)}\leq C_A \|v\|_{\MR^p(0,T,w_{\kappa})}, \qquad v\in \MR^p(0,T,w_{\kappa}). 
\end{align}
Due to this condition, it is clear that for $v\in \MR^p(0,T,w_{\kappa})$ one has $v',Av\in L^p(0,T,w_{\kappa};X_0)$ and 
\begin{align*}
\|v'+Av\|_{L^p(0,T,w_{\kappa};X_0)} &\leq \|v'\|_{L^p(0,T,w_{\kappa};X_0)} + \|Av\|_{L^p(0,T,w_{\kappa};X_0)}
\leq (1+C_A) \|v\|_{\MR^p(0,T,w_{\kappa})}.
\end{align*}
Maximal regularity theory addresses the validity of the converse estimate: 
  \begin{definition}\label{def:MR}
Let $T\in (0,\infty)$. Let $X_0$ and $X_1$ be Banach spaces such that $X_1\hookrightarrow X_0$ continuously. 
Suppose that $A\col[0,T]\to \calL(X_1, X_0)$ is strongly measurable in the strong operator topology.  
Let $p\in (1, \infty)$, $\kappa\in [0,p-1)$. 

The operator family $A$ is said to have \emph{maximal $L^p(0,T,w_{\kappa};X_0)$-regularity} if 
for every $f\in L^p(0,T,w_{\kappa};X_0)$ there exists a unique strong solution $u\in \MR^p(0,T,w_{\kappa})$ to \eqref{eq:parabolic_problem_prel} with $u_0=0$, and there is a constant $C$ independent of $f$ such that for any $\tau\in (0,T]$ and any strong solution $v\in \MR^p(0,\tau,w_{\kappa})$ to \eqref{eq:parabolic_problem_prel} with $u_0=0$  one has
\begin{equation}\label{eq:MRest}
\|v\|_{\MR^p(0,\tau,w_{\kappa})} \leq C\|f\|_{L^p(0,\tau,w_{\kappa};X_0)}.
\end{equation}
The infimum of all admissible constants $C$ will be denoted by $M_{p,\kappa,A}(0,T)$ and is called the \emph{maximal $L^p(0,T,w_{\kappa};X_0)$-regularity constant}. 

If the underlying space $X_0$ is clear from the context, we will write `maximal $L^p(0,T,w_{\kappa})$-regularity'.
\end{definition}

The definition above extends to subintervals $(a,b)\subseteq (0,T)$ by  replacing $(0,T)$ by $(a,b)$, $u(0)$ by $u(a)$ and taking $\tau\in(a,b]$.

Note that the existence and uniqueness for all $f\in L^p(0,T,w_{\kappa};X_0)$  require $X_1$ to be chosen appropriately.  
Also, one can conclude   $A u\in L^p(0,T,\wk;X_0)$ not only from \eqref{eq:bddcondLpA}, but  also from the identity $A u = f-u'$ that holds a.e.\ \cite[Lem.\ 2.5.8]{HNVWvolume1}.

Since we incorporated some flexibility into the above definition by allowing for arbitrary $\tau$ in \eqref{eq:MRest}, we obtain the following result on restriction to subintervals. Without the flexibility with $\tau$ in \eqref{eq:MRest}, it is unclear whether the uniqueness of solutions extends to subintervals $(0,\tau)$. In the autonomous setting, these problems can be solved in a different way (see \cite[\S17.2]{HNVWvolume3}).  Some discussion on the nonautonomous setting can also be found in \cite{ACFP}.

We consider the following problem on $(a,b)\subset (0,T)$: 
\begin{equation}
\label{eq:parabolic_problem_prel subinterval}
\left\{
\begin{aligned}
    u' + A u &= f, \\
    u(a) &= u_a.
\end{aligned}
\right.
\end{equation}

\begin{lemma}[Change of interval]\label{lem:restriction}
Let Assumption \ref{assump:XA} be satisfied. Let $p\in (1, \infty)$ and $\kappa\in [0,p-1)$. Suppose that 
$A$ has maximal $L^p(0,T,w_{\kappa})$-regularity.  
For any $0\leq a< b \leq T$ and $f\in L^p(a,b,w_{\kappa};X_0)$, there exists a unique strong solution $u\in \MR^p(a,b,w_{\kappa})$ to \eqref{eq:parabolic_problem_prel subinterval} with $u_a=0$, and   
\begin{equation*}
\|u\|_{\MR^p(a,b,w_{\kappa})}\leq M_{p,\kappa,A}(0,T) \|f\|_{L^p(a,b,w_{\kappa};X_0)}.
\end{equation*}
In particular,  $A$ has maximal $L^p(a,b,w_{\kappa})$-regularity and 
$M_{p,\kappa,A}(a,b)\leq M_{p,\kappa,A}(0,T)$.
 \end{lemma}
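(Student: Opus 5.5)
Existence will come from extending $f$ by zero, and uniqueness plus the estimate will come from the $\tau$-flexibility built into \eqref{eq:MRest}, after a restriction and concatenation argument.

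\emph{Existence.} Given $f\in L^p(a,b,w_\kappa;X_0)$, I extend it by zero to $\tilde f$ on $(0,T)$. Then $\tilde f\in L^p(0,T,w_\kappa;X_0)$ with the same norm. Let $\tilde u\in \MR^p(0,T,w_\kappa)$ be the unique strong solution with $\tilde u(0)=0$. Its restriction $\tilde u|_{(0,a)}$ is a strong solution on $(0,a)$ with zero right-hand side, so applying \eqref{eq:MRest} with $\tau=a$ gives $\|\tilde u\|_{\MR^p(0,a,w_\kappa)}\le C\cdot 0$. Hence $\tilde u=0$ on $[0,a]$ and in particular $\tilde u(a)=0$ by continuity. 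Set $u\ceqq\tilde u|_{(a,b)}$. It lies in $\MR^p(a,b,w_\kappa)$ and satisfies $u(t)=-\int_a^t Au\,ds+\int_a^t f\,ds$, since the integral over $(0,a)$ vanishes. For the estimate I apply \eqref{eq:MRest} with $\tau=b$:
\[
\|u\|_{\MR^p(a,b,w_\kappa)}\le\|\tilde u\|_{\MR^p(0,b,w_\kappa)}\le C\|\tilde f\|_{L^p(0,b,w_\kappa;X_0)}=C\|f\|_{L^p(a,b,w_\kappa;X_0)}.
\]
The weight is not shifted in either norm, so the two sides match exactly. Taking the infimum over admissible $C$ gives the constant $M_{p,\kappa,A}(0,T)$.

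\emph{Uniqueness and the estimate on subintervals.} Let $v\in\MR^p(a,\tau,w_\kappa)$, with $\tau\in(a,b]$, be any strong solution on $(a,\tau)$ with $v(a)=0$ and right-hand side $f$. I extend $v$ by zero to $\tilde v$ on $(0,\tau)$ and $f$ by zero likewise. The main point is to check that $\tilde v\in\MR^p(0,\tau,w_\kappa)$ and that it is a strong solution on $(0,\tau)$ with $\tilde v(0)=0$.
\begin{itemize}
\item The $L^p(X_1)$ part of the norm is clear.
\item For $W^{1,p}$ we need no jump at $a$. This holds because $v\in C([a,\tau];X_0)$ with $v(a)=0$, so $\tilde v$ is absolutely continuous with derivative $\one_{(a,\tau)}v'$.
\item The integral identity holds on $[0,a]$ trivially and on $[a,\tau]$ by that of $v$.
\end{itemize}
Then \eqref{eq:MRest} on $(0,\tau)$ yields
\[
\|v\|_{\MR^p(a,\tau,w_\kappa)}\le C\|f\|_{L^p(a,\tau,w_\kappa;X_0)}.
\]
With $f=0$ this gives uniqueness. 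Together with existence, it shows that $A$ has maximal $L^p(a,b,w_\kappa)$-regularity with $M_{p,\kappa,A}(a,b)\le M_{p,\kappa,A}(0,T)$.

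The only delicate step is the gluing check: verifying that the zero extension of a function vanishing at $a$ stays in $W^{1,p}$ and is still a strong solution.
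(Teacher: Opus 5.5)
Your proof is correct and follows the same route as the paper: uniqueness comes from extending a solution by zero on $(0,a)$ and applying \eqref{eq:MRest}, and existence comes from extending $f$ by zero and restricting the global solution. You are slightly more explicit than the paper in two places: you justify $\tilde u(a)=0$ via \eqref{eq:MRest} with $\tau=a$, and you derive the estimate for every $\tau\in(a,b]$ and every strong solution on $(a,\tau)$, which is exactly what the ``in particular'' clause requires.
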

\begin{proof}
We first prove uniqueness. Let $u\in \MR^p(a,b,w_{\kappa})$ be a strong solution to 
\eqref{eq:parabolic_problem_prel subinterval} with $u_a=0$.  
Extending $u$ by zero on $(0,a)$, one has $u\in \MR^p(0,b,w_{\kappa})$ and $u$ is a strong solution to \eqref{eq:parabolic_problem_prel} on $(0,b)$ with $u_0=0$. Thus \eqref{eq:MRest} implies that $u=0$. 

For existence let $f\in L^p(a,b,w_{\kappa};X_0)$. Extend $f$ by zero outside $(a,b)$. Let $u\in \MR^p(0,T,w_{\kappa})$ be the corresponding strong solution to \eqref{eq:parabolic_problem_prel} with $u_0=0$. Then $u\in \MR^p(a,b,w_{\kappa})$, $u$ solves   \eqref{eq:parabolic_problem_prel subinterval} with $u_a=0$, and for every constant $C$  admissible for \eqref{eq:MRest},  we have 
\begin{equation*}
\|u\|_{\MR^p(a,b,w_{\kappa})}\leq \|u\|_{\MR^p(0,b,w_{\kappa})} \leq C\|f\|_{L^p(0,b,w_{\kappa};X_0)} = C\|f\|_{L^p(a,b,w_{\kappa};X_0)}.
\end{equation*}
\end{proof}

The following lemma is standard (see \cite[Prop.\ 3.10]{AV22nonlinear1} and \cite[Lem.\ 2.2]{ACFP}) and holds in our generalized setting with only minor modifications.
As a consequence of the lemma, we can always assume $u_0=0$ when analyzing \eqref{eq:parabolic_problem_prel}.  
 
\begin{lemma}\label{lem:initial}
Let Assumption \ref{assump:XA}  be satisfied.
Let $p\in (1, \infty)$ and $\kappa\in [0,p-1)$.  Suppose that \eqref{eq:bddcondLpA} holds  and $A$ has maximal $L^p(0,T,w_{\kappa})$-regularity with constant $M\ceqq M_{p,\kappa,A}(0,T)$. Then the following assertions hold:
\begin{enumerate}[\em (1)]
\item Let $0<b\leq T$. For every $u_0\in X_{1-\frac{1+\kappa}{p},p}$  and $f\in L^p(0,b,w_{\kappa};X_0)$ there exists a unique strong solution $u\in \MR^p(0,b,w_{\kappa})$ to \eqref{eq:parabolic_problem_prel}, and  
\begin{align*}
&\|u\|_{\MR^p(0,b,w_{\kappa})}\leq  \tfrac{12p}{1+\kappa}\big(M(1+ C_A)+1\big) \|u_0\|_{X_{1-\frac{1+\kappa}{p},p}}+M\|f\|_{L^p(0,b,w_{\kappa};X_0)}. 
\end{align*}
\item Let $0<a<b\leq T$. For every $u_a\in X_{1-\frac{1}{p},p}$ and $f\in L^p(a,b,w_{\kappa};X_0)$ there exists a unique strong solution $u\in \MR^p(a,b,w_{\kappa})$ to \eqref{eq:parabolic_problem_prel subinterval}, and 
\begin{equation*}
\|u\|_{\MR^p(a,b,w_{\kappa})}\leq 12pT^{\kappa/p}\big(M(1 + C_A)+1\big) \|u_a\|_{X_{1-\frac{1}{p},p}}+ M \|f\|_{L^p(a,b,w_{\kappa};X_0)}.
\end{equation*}
\end{enumerate}
\end{lemma}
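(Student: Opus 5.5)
The approach is the standard reduction to zero initial data: subtract an extension of the initial value and solve for the remainder with maximal regularity.

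For (1), fix $u_0\in X_{1-\frac{1+\kappa}{p},p}$. We first construct an extension $w\in \MR^p(0,\infty,w_{\kappa})$ with $w(0)=u_0$ and
\[
\|w\|_{\MR^p(0,\infty,w_{\kappa})}\leq \tfrac{12p}{1+\kappa}\|u_0\|_{X_{1-\frac{1+\kappa}{p},p}}.
\]
This is the right inverse of the trace map in the weighted setting. A concrete choice is $w(t)=\frac1t\int_0^t\cdots$, or more explicitly the representation underlying \cite[Cor.\ L.4.6]{HNVWvolume3}. One can take a near-optimal decomposition $u_0=x_0(t)+x_1(t)$ from the $K$-functional and set $w(t)=e^{-t}\frac{1}{t}\int_t^{2t}x_1(s)\,ds$, or a similar averaged expression. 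The constant $\frac{12p}{1+\kappa}$ comes from Hardy's inequality with weight $t^\kappa$, whose constant is of order $p/(1+\kappa)$. I would cite this extension result with constant tracking, as in \cite[Prop.\ 3.10]{AV22nonlinear1}, rather than reprove it.

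Next set $\tilde f\ceqq f-w'-Aw$ on $(0,b)$. By \eqref{eq:bddcondLpA}, restricted to $(0,b)$ (the bound on $(0,T)$ applies after extending $w|_{(0,b)}$ or using $w$ on all of $(0,T)$), we get $\|w'+Aw\|_{L^p(0,b,w_\kappa;X_0)}\le (1+C_A)\|w\|_{\MR^p}$. By Lemma \ref{lem:restriction} on $(0,b)$, there is a unique strong solution $v\in\MR^p(0,b,w_\kappa)$ of $v'+Av=\tilde f$, $v(0)=0$, with $\|v\|\le M\|\tilde f\|$. Then $u=v+w$ is a strong solution with $u(0)=u_0$, and the triangle inequality gives
\[
\|u\|\le \|w\|+M(1+C_A)\|w\|+M\|f\|,
\]
which is exactly the claimed bound. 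For uniqueness, the difference of two solutions is a strong solution with zero data and $f=0$, so it vanishes by Lemma \ref{lem:restriction}.

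For (2), the same argument works on $(a,b)$ with $u_a\in X_{1-\frac1p,p}$. Use the unweighted extension $w(t)=\tilde w(t-a)$, where $\tilde w$ is the $\kappa=0$ extension with constant $12p$. The weighted norm on $(a,b)$ is then bounded by $T^{\kappa/p}$ times the unweighted norm, since $t^\kappa\le T^\kappa$. The extra $T^{\kappa/p}$ factor appears in front of $\|u_a\|$ through both $\|w\|$ and $M(1+C_A)\|w\|$, while $f$ is handled directly by the weighted maximal regularity from Lemma \ref{lem:restriction}.

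One subtlety needs care: \eqref{eq:bddcondLpA} is only assumed on $(0,T)$. On $(a,b)$ it must be applied to $w$ extended suitably, for instance by a smooth cutoff or by zero extension. The cleanest option is to define $w$ on $(0,T)$ with a cutoff that vanishes near $0$, and then restrict to $(a,b)$.

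The main obstacle is the explicit constant in the extension operator. I would handle it by quoting the weighted trace/extension result with its constant, rather than recomputing the Hardy-type estimates.
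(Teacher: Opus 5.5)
This is essentially the paper's proof. You extend the initial value by the trace method (your $w$ is the paper's $z$), solve $v'+Av=f-w'-Aw$ with $v(0)=0$ via Lemma \ref{lem:restriction}, set $u=v+w$, and in (2) use a shifted unweighted extension together with $t^\kappa\le T^\kappa$. The paper avoids your explicit formulas by taking the infimum over all extensions and quoting \cite[Th.\ L.2.3]{HNVWvolume3} for the constant, which is legitimate because $u$ does not depend on the extension, by uniqueness.

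The subtlety you flag in (2), that \eqref{eq:bddcondLpA} is only available on $(0,T)$, is passed over silently in the paper. Zero extension fails because of the jump at $a$, and a cutoff placed inside $(0,a)$ makes the constant blow up as $a\downarrow 0$. The even reflection $t\mapsto \tilde w(|t-a|)$ works instead: it has $\MR^p(0,T,w_\kappa)$-norm at most $2^{1/p}T^{\kappa/p}\|\tilde w\|_{\MR^p(0,\infty)}$, giving the estimate uniformly in $a$, though with $C_A$ replaced by $2^{1/p}C_A$ rather than the exact stated constant.
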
 
\begin{proof}
(1): \ Uniqueness is clear. To prove the existence and estimate, we exploit the trace method for real interpolation (see \cite[{\S}L.2]{HNVWvolume3} and \cite[\S 1.8]{Tri78}). Let $z\in \MR^p(0,\infty,w_{\kappa})$ be such  that $z(0) = u_0$. 
Note that $z',Az\in L^p(0,b,\wk;X_0)$, using \eqref{eq:bddcondLpA}. 
Let $v\in \MR^p(0,b,w_{\kappa})$ be the unique solution to the problem $v' + A v= f- z'-Az$ with $v(0) = 0$. Then $u \ceqq v+z$ satisfies \eqref{eq:parabolic_problem_prel}.  
Moreover, 
\begin{align*}
\|u&\|_{\MR^p(0,b,w_{\kappa})} \leq \|v\|_{\MR^p(0,b,w_{\kappa})}+ \|z\|_{\MR^p(0,T,w_{\kappa})} 
\\ & \leq M \big(\|f\|_{L^p(0,b,w_{\kappa};X_0)}+   \|z'\|_{L^p(0,T,w_{\kappa};X_0)}+ \|A z\|_{L^p(0,T,w_{\kappa};X_0)}\big) + \|z\|_{\MR^p(0,T,w_{\kappa})}  
\\ & \leq M \|f\|_{L^p(0,b,w_{\kappa};X_0)} + \big(M(1+ C_A)+1\big) \|z\|_{\MR^p(0,\infty,w_{\kappa})},
\end{align*} 
applying \eqref{eq:bddcondLpA} in the last line.
Taking the infimum over all $z\in \MR^p(0,\infty,w_{\kappa})$, the stated estimate  follows from the trace method for real interpolation, see \cite[Th.\ L.2.3]{HNVWvolume3} for the constant. 

(2): \ Uniqueness is clear  from Lemma \ref{lem:restriction}. To prove existence, we use the same strategy as above. Let $z\in \MR^p(a,\infty)$ be such that  $z(a) = u_a$. We repeat the construction of $v$ of (1), but this time on $(a,b)$, and we again put $u\ceqq v+z$. By Lemma \ref{lem:restriction} and a computation similar to that in part (1), we have  
\begin{align*}
\|u\|_{\MR^p(a,b,w_{\kappa})}  &\leq M \|f\|_{L^p(a,b,w_{\kappa};X_0)} + \big(M(1+ C_A)+1\big) \|z\|_{\MR^p(a,T,\wk)}
\\ & \leq M \|f\|_{L^p(a,b,w_{\kappa};X_0)}+ T^{\kappa/p}\big(M(1+ C_A)+1\big) \|z\|_{\MR^p(a,\infty)}.
\end{align*} 
Taking the infimum over all $z\in \MR^p(a,\infty)$, the claim follows as before and by a simple shift. 
\end{proof}

A consequence of the theory above is that if $A$ satisfies \eqref{eq:bddcondLpA} and has maximal $L^p(0,T,w_{\kappa})$-regularity then the following mapping is an isomorphism: 
\[
\mathcal{J}\col X_{1-\frac{1+\kappa}{p},p}\times L^p(0,T,w_{\kappa};X_0)\to \MR^p(0,T,w_{\kappa}),  \ \ \mathcal{J} (u_0, f) = u, 
\] 
where $u$ is the strong solution to \eqref{eq:parabolic_problem_prel}. 
Indeed, \eqref{eq:traceembedding0T} and \eqref{eq:bddcondLpA} give respectively 
\begin{align*}
&\|u_0\|_{X_{1-\frac{1+\kappa}{p},p}}\leq C \|u\|_{\MR^p(0,T,w_{\kappa})}, \\
&\|f\|_{L^p(0,T,w_{\kappa};X_0)} \leq (1+C_A)\|u\|_{\MR^p(0,T,w_{\kappa})}.
\end{align*} 
Combining this with Lemma \ref{lem:initial}(1), we obtain the required two-sided estimates showing that $\mathcal{J}$ is an isomorphism. 
 
Many standard results of the autonomous setting fail for the setting of time-dependent $A$. Some of them we collect in the next remark. 
\begin{remark}\ 
\begin{enumerate}[(1)]
\item Extrapolation in $p$ fails in general. 
Counterexamples to extrapolation for singular families $A$ appear in \cite{prussschnaubelt01}. Counterexamples also exist in the uniformly bounded setting, for instance with differential operators in divergence form, as considered in \cite{BMV, Krylov1d}. Therein, the coefficients are bounded and uniformly elliptic, and measurable in $(t,x)$. 
\item Extrapolation in $p$ can be obtained if $A$ is (relatively) continuous in time (see \cite{ACFP,prussschnaubelt01}), in which case one can reduce to the autonomous setting. This is, for instance, considered in \cite[Th.\ 17.2.26]{HNVWvolume3}.  
\item Maximal $L^p$-regularity does not seem to imply that $A$ generates an evolution family on $X_0$. Indeed, in the case $A$ depends on time in a measurable way, maximal $L^p$-regularity is known to hold in various situations (cf.\ \cite{DongKim11,GV17_pot}), but it is unclear whether $A$ generates an evolution family on $X_0$. An evolution system can be constructed on certain real interpolation spaces via Lemma \ref{lem:initial}. 
        Moreover, for $\kappa=0$, and continuous $t\mapsto A(t)\in \calL(X_1, X_0)$, an evolution family does exist as was shown in \cite[Th.\ 2.5]{prussschnaubelt01}, but even that case is highly nontrivial.  
\end{enumerate}
\end{remark}

\subsection{Main result}
We now present the main result of this section. It contains Theorem \ref{thm:MRI} as a special case.   
\begin{theorem}\label{thm:perturbLp}
Let $p\in (1, \infty)$, $\kappa\in [0,p-1)$ and $q\in (p, \infty)$. Suppose that Assumption \ref{assump:XA} and \eqref{eq:bddcondLpA} hold and that $A$ has maximal $L^p(0,T,w_{\kappa})$-regularity. Let $B\col [0,T]\to \calL(X_{1-\nicefrac{1}{q},1},X_0)$ be strongly measurable in the strong operator topology and suppose that 
$\|B(\cdot)\|_{\calL(X_{1-\nicefrac{1}{q},1},X_0)}\leq b(\cdot)$, where $b\in L^q(0,T)$. Then $A+B$ has maximal $L^p(0,T,w_\kappa)$-regularity with $M_{p,\kappa,A+B}(0,T)\leq C$, where $C$ only depends on $p, q,\kappa,X_0, X_1, M_{p,\kappa,A}(0,T)$ and $\|b\|_{L^q(0,T)}$.
\end{theorem}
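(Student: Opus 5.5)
The plan is a perturbation argument on finitely many subintervals on which $b$ is small in $L^q$. The number of subintervals must be controlled by $\|b\|_{L^q(0,T)}$ alone; this is possible because $q<\infty$. Fix $\varepsilon>0$, to be chosen depending only on $p,q,\kappa,X_0,X_1$ and $M\ceqq M_{p,\kappa,A}(0,T)$. Choose $0=t_0<t_1<\dots<t_N=T$ with $\|b\|_{L^q(t_{j-1},t_j)}\le\varepsilon$. By absolute continuity of $t\mapsto\int_0^t b^q$ we can take $N\le 1+\|b\|_{L^q(0,T)}^q/\varepsilon^q$.

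The key analytic input is the trace embedding quoted after Theorem \ref{thm:MRI}, \cite[Th.\ L.4.4]{HNVWvolume3}:
\[
\MR^p(a,b)\hookrightarrow L^s(a,b;X_{1-\nicefrac1q,1}),\qquad \tfrac1p=\tfrac1q+\tfrac1s,
\]
with a constant $K$ independent of $a,b$ for functions vanishing at $t=a$. Likewise, for functions in $\MR^p(a,\infty)$ the constant does not depend on $a$, by a shift. I would also need the weighted version on $(0,t_1)$ with weight $w_\kappa$. There I would either use the weighted analogue of the same embedding, or work on $(0,t_1)$ with $\MR^p(0,t_1,w_\kappa)$ and $L^s(0,t_1,w_{\kappa s/p};X_{1-\nicefrac1q,1})$, since Hölder's inequality splits the weight as $t^{\kappa}=t^{\kappa}\cdot 1$ in the right exponents. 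Combined with Hölder's inequality this gives, for $v$ vanishing at $a$,
\[
\|Bv\|_{L^p(a,b,w_\kappa;X_0)}\le K\|b\|_{L^q(a,b)}\|v\|_{\MR^p(a,b,w_\kappa)}.
\]
Making sure these constants are genuinely uniform in the interval (and in the weight) is the step I expect to be the main obstacle. If \cite{HNVWvolume3} only gives it on fixed intervals, I would reprove it by extending a function vanishing at $a$ by zero to the left and applying the half-line result.

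On each $(t_{j-1},t_j)$ I would solve $u'+Au+Bu=f$ with $u(t_{j-1})=u_{j-1}\in X_{1-\frac1p,p}$; on $j=1$ the datum lies in $X_{1-\frac{1+\kappa}{p},p}$, and for maximal regularity it is $0$. To handle nonzero data without length-dependent constants, I split $u=\zeta+v$. Here $\zeta\in\MR^p(t_{j-1},\infty)$ is an extension with $\zeta(t_{j-1})=u_{j-1}$ and $\|\zeta\|\le C\|u_{j-1}\|_{X_{1-\frac1p,p}}$, from the trace method, as in the proof of Lemma \ref{lem:initial}. The embedding gives $\|B\zeta\|_{L^p}\le C\|b\|_{L^q}\|u_{j-1}\|$ uniformly. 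The remainder $v$ vanishes at $t_{j-1}$ and solves
\[
v'+Av=f-\zeta'-A\zeta-B\zeta-Bv,
\]
where $A\zeta$ is controlled by \eqref{eq:bddcondLpA}. By Lemma \ref{lem:restriction}, the map $\Phi(w)=$ the zero-initial solution of $v'+Av=F-Bw$ is a contraction on the zero-trace subspace of $\MR^p(t_{j-1},t_j,w_\kappa)$ once $MK\varepsilon<\tfrac12$. This yields existence and the bound
\[
\|u\|_{\MR^p(t_{j-1},t_j,w_\kappa)}\le C_1\bigl(\|u_{j-1}\|+\|f\|_{L^p(t_{j-1},t_j,w_\kappa;X_0)}\bigr),
\]
with $C_1$ depending only on the allowed quantities (a factor $T^{\kappa/p}$ enters as in Lemma \ref{lem:initial}(2)). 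The same estimate applied to any strong solution on $(t_{j-1},\tau)$ with $\tau\le t_j$ gives uniqueness and the $\tau$-flexible estimate \eqref{eq:MRest}.

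Finally I would glue. The trace embedding \eqref{eq:traceembeddingaT}, applied to $v$ (zero at $t_{j-1}$, uniform constant) and to $\zeta$, bounds $\|u(t_j)\|_{X_{1-\frac1p,p}}$ by $C_2(\|u_{j-1}\|+\|f\|)$. Iterating from $u_0=0$ gives a strong solution on $(0,T)$ in $\MR^p(0,T,w_\kappa)$; the pieces match continuously in $X_0$ at the $t_j$, so the integral identity holds globally. The resulting norm bound is at most $(C_1+C_2)^{N}\|f\|$. For an arbitrary $\tau\in(0,T]$ and a solution $v$ on $(0,\tau)$, I run the same induction up to the interval containing $\tau$; uniqueness on each piece gives uniqueness globally. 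Since $N$ is bounded in terms of $\|b\|_{L^q(0,T)}$ and $\varepsilon$, this yields $M_{p,\kappa,A+B}(0,T)\le C$ with the claimed dependence.
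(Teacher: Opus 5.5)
\textbf{There is a genuine gap in the gluing step when $\kappa>0$.}

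First, the step you expected to be the main obstacle is fine. Take $v\in\MR^p(a,b,w_\kappa)$ with $v(a)=0$ and extend it by zero to $(0,a)$; this keeps the unshifted weighted norm. Then \eqref{eq:traceembeddingaT}, the identity $X_{1-\nicefrac{1}{q},1}=(X_{1-\frac1p,p},X_1)_{1-\frac pq,1}$ and Hölder give $\|Bv\|_{L^p(a,b,w_\kappa;X_0)}\le C_0\|b\|_{L^q(a,b)}\|v\|_{\MR^p(a,b,w_\kappa)}$ with a uniform constant. This is \eqref{eq:KeyestB}.

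The failure is elsewhere. The only uniform control of the trace at $t_j$ is the weighted bound $t_j^{\kappa/p}\|v(t_j)\|_{X_{1-\frac1p,p}}\le K\|v\|_{\MR^p(t_{j-1},t_j,w_\kappa)}$. So your $C_2$, which bounds $\|u(t_j)\|_{X_{1-\frac1p,p}}$, secretly contains $t_j^{-\kappa/p}$. Conversely, your unweighted extension $\zeta\in\MR^p(t_j,\infty)$ costs a factor $t_{j+1}^{\kappa/p}$ in the $w_\kappa$-norm on the next interval. These ratio factors $(t_{j+1}/t_j)^{\kappa/p}$ multiply along the iteration and telescope to $(T/t_1)^{\kappa/p}$. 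That quantity is not controlled by $\|b\|_{L^q(0,T)}$.

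Concretely, suppose $\|b\|_{L^q(0,\delta)}>\varepsilon$. Then necessarily $t_1<\delta$, and your bound degenerates like $(T/\delta)^{\kappa/p}$ as $\delta\downarrow0$ while $\|b\|_{L^q(0,T)}$ stays fixed. Inserting extra points to keep the ratios bounded does not help, since it makes $N$ grow like $\log_2(T/\delta)$. Handling exactly this dependence on the shape of $b$ is the nontrivial content of the theorem. In addition, your treatment of $\zeta$ through \eqref{eq:bddcondLpA} and Lemma \ref{lem:initial}(2) makes the constant depend on $C_A$ and $T^{\kappa/p}$, which the statement excludes. For $\kappa=0$ your scheme does work, but still with this extra dependence on $C_A$.

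\textbf{The missing idea} is never to pass through $\|u(\tau_n)\|_{X_{1-\frac1p,p}}$. Instead, formulate every subproblem on $(0,\tau_{n+1})$ with zero data at $t=0$ and the unshifted weight.
\begin{enumerate}
\item Let $\tilde z$ solve $\tilde z'+A\tilde z=0$ on $(\tau_n,\tau_{n+1})$ with $\tilde z(\tau_n)=u(\tau_n)$. Only its existence is needed at this point.
\item Obtain the correction $z$ as the fixed point, in the subspace of $\MR^p(0,\tau_{n+1},w_\kappa)$ vanishing at $0$, of
\[
z'+Az=(f-Bz-B\tilde z)\one_{[\tau_n,\tau_{n+1}]} .
\]
\item Estimate $\tilde z$ and $B\tilde z$ by gluing: set $v\ceqq u$ on $[0,\tau_n]$ and $v\ceqq\tilde z$ on $[\tau_n,\tau_{n+1}]$. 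This $v$ vanishes at $0$ and solves $v'+Av=(f-Bv)\one_{[0,\tau_n]}$ on $(0,\tau_{n+1})$.
\item Maximal regularity on $(0,\tau_{n+1})$ together with \eqref{eq:KeyestB} then gives
\[
\|v\|_{\MR^p(0,\tau_{n+1},w_\kappa)}\le\tfrac n2\|u\|_{\MR^p(0,\tau_n,w_\kappa)}+M\|f\|_{L^p(0,\tau_n,w_\kappa;X_0)},
\]
with no trace norms, no $C_A$ and no $T$.
\item This yields the recursion
\[
\|u\|_{\MR^p(0,\tau_{n+1},w_\kappa)}\le(1+n)\|u\|_{\MR^p(0,\tau_n,w_\kappa)}+4M\|f\|_{L^p(0,\tau_{n+1},w_\kappa;X_0)} ,
\]
hence the bound $7\,N!\,M$ with $N\le1+(2C_0M)^q\|b\|_{L^q(0,T)}^q$. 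This is independent of where the $\tau_n$ lie.
\end{enumerate}
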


Recall that nonzero initial values can be added as a consequence of Lemma \ref{lem:initial}. 
From Step 0 of the proof below, it follows that $B$ satisfies 
\[\|B v\|_{L^p(0,t,w_{\kappa};X_0)} \leq C \|v\|_{\MR^{p}(0,t,w_{\kappa})}, \qquad v\in \MR^{p}(0,t,w_{\kappa}), t\in(0,T],
\]
see  \eqref{eq:KeyestB}. Consequently, $A+B$ also has the  mapping property \eqref{eq:bddcondLpA}. 

For relatively bounded perturbations $B$, maximal $L^p$-regularity of $A+B$ is typically established via the method of continuity \cite[Prop.\ 17.2.49]{HNVWvolume3}. This approach appears inapplicable in our setting. 
Consequently, the proof of Theorem \ref{thm:perturbLp} utilizes a gluing procedure with nonstandard features. In particular, due to the weight, it is nontrivial to show that the constant $C$ only depends on $\|b\|_{L^q(0,T)}$ and not on the shape of the function $b$ itself. The dependence on $\|b\|_{L^q(0,T)}$ is exponential, and this is sharp in general. In Theorem \ref{thm:perturbGronwall} we will treat the case $p=q$, which turns out to be much simpler.  

A repeated application of Theorem \ref{thm:perturbLp} further extends the result to $A+\sum_{i=1}^n B_i$, where $\|B_i(t)\|_{\calL(X_{1-\nicefrac{1}{q_i},1},X_0)}\leq b_i(t)$ and $b_i\in L^{q_i}(0,T)$ with $q_i\in (p, \infty)$.

\begin{proof}[Proof of Theorem \ref{thm:perturbLp}]
{\em Step 0:} As $T, p, \kappa,$ and $A$ are fixed throughout the proof, we simplify notation by writing $M$ for $M_{p,\kappa,A}(0,T)$. 
Let $\prescript{}{0}{\MR}^{p}(s,t,w_{\kappa})$ be the subspace  of $u\in \MR^{p}(s,t,w_{\kappa})$ such that $u(s) = 0$. 

Let $t\in (0,T]$. For $v\in \prescript{}{0}{\MR}^{p}(0,t,w_{\kappa})$, we have by H\"older's inequality: 
\[\|B v\|_{L^p(0,t,w_{\kappa};X_0)}\leq \big\| b \|v\|_{X_{1-\nicefrac{1}{q},1}} \big\|_{L^p(0,t,w_{\kappa})}\leq \|b\|_{L^q(0,t)} 
\|v w_{\nicefrac{\kappa}{p}}\|_{L^{\frac{pq}{q-p}}(0,t;X_{1-\nicefrac{1}{q},1})}. \]  
Note that $(X_{1-\frac{1}{p},p},X_1)_{1-\frac{p}{q},1} =X_{1-\frac{1}{q},1}$  by reiteration (see \cite[Th.\ L.3.1]{HNVWvolume3}), so 
\begin{align*}
\|v w_{\nicefrac{\kappa}{p}}\|_{X_{1-\frac{1}{q},1}}\lesssim \|v w_{\nicefrac{\kappa}{p}}\|_{X_{1-\frac1p,p}}^{\frac{p}{q}}  \|v w_{\nicefrac{\kappa}{p}}\|_{X_{1}}^{1-\frac{p}{q}}.
\end{align*}
Combined with \eqref{eq:traceembeddingaT},   
we obtain the following estimates with $t$-independent constants (since $v\in \prescript{}{0}{\MR}^{p}(0,t,w_{\kappa})$):  
\begin{align*}
\|v w_{\nicefrac{\kappa}{p}}\|_{L^{\frac{pq}{q-p}}(0,t;X_{1-\frac{1}{q},1})}& \lesssim  \sup_{s\in [0,t]}\|v(s) w_{\nicefrac{\kappa}{p}}(s)\|_{X_{1-\frac1p,p}}^{\frac{p}{q}}  \|v w_{\nicefrac{\kappa}{p}}\|_{L^p(0,t;X_{1})}^{1-\frac{p}{q}}
\\ & \lesssim \|v\|_{{\MR}^{p}(0,t,w_{\kappa})}^{\frac{p}{q}}  \|v\|_{{\MR}^{p}(0,t,w_{\kappa})}^{1-\frac{p}{q}} = \|v\|_{{\MR}^{p}(0,t,w_{\kappa})}.
\end{align*}
We conclude  that for a constant $C_0$ depending on $X_0, X_1,p,q$ and $\kappa$, 
\begin{align}\label{eq:KeyestB}
\|B v\|_{L^p(0,t,w_{\kappa};X_0)} \leq C_0 \|b\|_{L^q(0,t)}   \|v\|_{{\MR}^{p}(0,t,w_{\kappa})}.
\end{align}
The estimates above also hold for $v\in {\MR}^{p}(0,t,w_{\kappa})$, but in the latter case, the constant would depend on $t$. 

\medskip

The strategy for the remainder of the proof is to construct local solutions on small time intervals and glue them together. In Step 1, the initial condition is zero, but in Step 2, we address nonzero initial conditions as well. 

{\em Step 1:}
Let $\tau$ be such that $\|b\|_{L^q(0,\tau)} = \frac{1}{2 C_{0} M}$ if possible, and otherwise set $\tau = T$.  In the latter case, no further steps are needed. 

Let $\Phi\col\prescript{}{0}{\MR}^{p}(0,\tau,w_{\kappa})\to \prescript{}{0}{\MR}^{p}(0,\tau,w_{\kappa})$ be given by $\Phi(v) = u$, where $u$ solves 
\[u' + A u = - Bv+f.\]
By the maximal $L^p(0,T,w_\kappa)$-regularity of $A$ and Lemma \ref{lem:restriction}, the mapping $\Phi$ is well-defined and with \eqref{eq:KeyestB}, we have 
\begin{align*}
\|\Phi(v_1) - \Phi(v_2)\|_{{\MR}^{p}(0,\tau,w_{\kappa})}&\leq M\|B(v_1 - v_2)\|_{L^p(0,\tau,w_{\kappa};X_0)}
\\ & \leq M C_{0} \|b\|_{L^q(0,\tau)}   \|v_1-v_2\|_{{\MR}^{p}(0,\tau,w_{\kappa})}
\\ & \leq \frac12\|v_1-v_2\|_{{\MR}^{p}(0,\tau,w_{\kappa})}.
\end{align*}
By the contraction mapping theorem there exists a unique $u\in \prescript{}{0}{\MR}^{p}(0,\tau,w_{\kappa})$ such that $u= \Phi(u)$, which implies that there is a unique strong solution $u\in \prescript{}{0}{\MR}^{p}(0,\tau,w_{\kappa})$ to $u' + (A+B) u = f$. Moreover, 
\begin{align*}
\|u\|_{{\MR}^{p}(0,\tau,w_{\kappa})}   = \|\Phi(u)\|_{{\MR}^{p}(0,\tau,w_{\kappa})}
&\leq \|\Phi(u)- \Phi(0)\|_{{\MR}^{p}(0,\tau,w_{\kappa})} + \|\Phi(0)\|_{{\MR}^{p}(0,\tau,w_{\kappa})}
\\ & \leq \frac12 \|u\|_{{\MR}^{p}(0,\tau,w_{\kappa})} + M\|f\|_{L^p(0,\tau,w_{\kappa};X_0)}.
\end{align*}
Thus 
\begin{equation}\label{eq: n=1 estimate}
\|u\|_{{\MR}^{p}(0,\tau,w_{\kappa})} \leq 2M\|f\|_{L^p(0,\tau,w_{\kappa};X_0)}. 
\end{equation}

\medskip
{\em Step 2: } We proceed iteratively. 

Let $\tau_1 \ceqq \tau$. Let $n\geq 1$ be given and suppose that $0<\tau_{1}< \ldots< \tau_n<T$ are given and $u\in \prescript{}{0}{\MR}^{p}(0,\tau_n,w_{\kappa})$ is the unique strong solution within $\prescript{}{0}{\MR}^{p}(0,\tau_n,w_{\kappa})$ to $u'+A u +Bu = f$. We will extend the solution uniquely to $u\in \prescript{}{0}{\MR}^{p}(0,\tau_{n+1},w_{\kappa})$ defined on  $[0,\tau_{n+1}]$.  Additionally,  we will establish a recursive energy estimate that will be utilized in Step 3. 

Choose $\tau_{n+1}$ such that $\|b\|_{L^q(\tau_{n},\tau_{n+1})} = \frac{1}{2 C_{0} M}$ if possible, and otherwise set $\tau_{n+1} \ceqq T$ and turn to Step 3. 
From \eqref{eq:traceembeddingaT} we know that $u(\tau_n)\in X_{1-\frac1p,p}$. By Lemma \ref{lem:initial} there exists a unique strong solution $\wt{\zz}\in \MR^p(\tau_n, \tau_{n+1},w_{\kappa})$ to 
\begin{equation*} 
\left\{
\begin{aligned}
\wt{\zz}\kern 0.09em' + A\wt{\zz} &= 0,
\\ \wt{\zz}(\tau_n) & = u(\tau_n). 
\end{aligned}
\right.
\end{equation*}
Let $\Phi_n\col\prescript{}{0}{\MR}^p(0, \tau_{n+1},w_{\kappa})\to \prescript{}{0}{\MR}^p(0, \tau_{n+1},w_{\kappa})$ be given by $\Phi_n(v) = \zz$, where $\zz$ solves
\begin{equation*} 
\left\{
\begin{aligned}
\zz' + A\zz &= -B(v)\one_{[\tau_n, \tau_{n+1}]} - B(\wt{\zz})\one_{[\tau_n, \tau_{n+1}]} + f \one_{[\tau_n, \tau_{n+1}]},
\\ \zz(0) & = 0. 
\end{aligned}
\right.
\end{equation*}
By maximal $L^p(0,T,w_\kappa)$-regularity, $\Phi_n$ is well-defined, and by \eqref{eq:KeyestB}, 
\begin{align*}
\|\Phi_n(v_1) - \Phi_n(v_2)\|_{\MR^p(0,\tau_{n+1},w_{\kappa})} & \leq M \|B(v_1 - v_2)\|_{L^p(\tau_n,\tau_{n+1},w_{\kappa};X_0)}
\\ & \leq M C_{0} \|b\|_{L^q(\tau_n, \tau_{n+1})}   \|v_1-v_2\|_{\MR^p(0,\tau_{n+1},w_{\kappa})}
\\ & \leq \frac12\|v_1-v_2\|_{\MR^p(0,\tau_{n+1},w_{\kappa})}.
\end{align*}
Therefore, there exists a unique $\zz\in \prescript{}{0}{\MR}^p(0, \tau_{n+1},w_{\kappa})$ such that $\zz = \Phi_n(\zz)$. 
Moreover, 
\begin{align*}
\|\zz\|_{\MR^p(0, \tau_{n+1},w_{\kappa})}
 & \leq \|\Phi_n(\zz) - \Phi_n(0)\|_{\MR^p(0, \tau_{n+1},w_{\kappa})}+\|\Phi_n(0)\|_{\MR^p(0, \tau_{n+1},w_{\kappa})}
\\ & \leq \frac12\|\zz\|_{\MR^p(0, \tau_{n+1},w_{\kappa})}+M\|B \wt{\zz}\|_{L^p(\tau_n,\tau_{n+1},w_{\kappa};X_0)} +M\|f\|_{L^p(0,\tau_{n+1},w_{\kappa};X_0)}.
\end{align*}
Thus 
\[\|\zz\|_{\MR^p(0, \tau_{n+1},w_{\kappa})}\leq 2M\|B \wt{\zz}\|_{L^p(\tau_n,\tau_{n+1},w_{\kappa};X_0)}+2M\|f\|_{L^p(0,\tau_{n+1},w_{\kappa};X_0)}.\]
Now extend $u$ by defining $$u = \zz|_{[\tau_n, \tau_{n+1}]}+\wt{\zz}\quad\text{ on $[\tau_n,\tau_{n+1}]$}.$$
Then $u\in \MR^p(\tau_n, \tau_{n+1},w_{\kappa})$  is a strong solution to 
\begin{equation*} 
\left\{
\begin{aligned}
y' + Ay + By &=   f,  
\\ y(\tau_n) & = u(\tau_n). 
\end{aligned}
\right.
\end{equation*}
Moreover, the   estimates above yield 
\begin{align}\label{eq: est u}
\|u&\|_{\MR^p(\tau_n, \tau_{n+1},w_{\kappa})} \leq \|\wt{\zz}\|_{\MR^p(\tau_n, \tau_{n+1},w_{\kappa})} + \|\zz\|_{\MR^p(\tau_n, \tau_{n+1},w_{\kappa})}
\\ 
& \leq \|\wt{\zz}\|_{\MR^p(\tau_n, \tau_{n+1},w_{\kappa})} + 2M\|B \wt{\zz}\|_{L^p(\tau_n,\tau_{n+1},w_{\kappa};X_0)} + 2M\|f\|_{L^p(0,\tau_{n+1},w_{\kappa};X_0)}. \notag
\end{align}
It remains to bound the norms of $\wt{\zz}$ and $B\wt{\zz}$. To this end, consider $v\in \MR^p(0,\tau_{n+1},w_{\kappa})$ defined by $v=u$ on $[0,\tau_n]$  and $v = \wt{\zz}$ on $[\tau_n, \tau_{n+1}]$. Then $v$ is a strong solution to 
\begin{equation*}  
\left\{
\begin{aligned}
v'+A v   &= - B v \one_{[0,\tau_n]}+f \one_{[0,\tau_n]},
\\ v(0) & = 0,  
\end{aligned}
\right.
\end{equation*}
and by maximal $L^p(0,T,w_\kappa)$-regularity and \eqref{eq:KeyestB}, we have 
\begin{align*} \|v\|_{\MR^p(0, \tau_{n+1},w_{\kappa})}
 & \leq M\|Bv\|_{L^p(0,\tau_n,w_{\kappa};X_0)} + M \|f\|_{L^p(0,\tau_n,w_{\kappa};X_0)}
\\ & \leq M C_{0} \|b\|_{L^q(0,\tau_{n})}   \|v\|_{\MR^p(0,\tau_n,w_{\kappa})} + M \|f\|_{L^p(0,\tau_n,w_{\kappa};X_0)}\\
&\leq \frac{n}{2}\|u\|_{\MR^p(0,\tau_n,w_{\kappa})} + M \|f\|_{L^p(0,\tau_n,w_{\kappa};X_0)}.
\end{align*}
For the last inequality above, we use that $v=u$ on $[0,\tau_n]$, and by construction, 
\begin{equation*} 
  \|b\|_{L^q(0,\tau_{n})} =  \frac{n^{1/q}}{2 C_{0} M}\leq \frac{n}{2 C_{0} M}.
\end{equation*}

Now we estimate  $ \wt{\zz}$ and $B\wt{\zz}$ in terms of $v$. 
For the $\wt{\zz}$-term, we have 
\begin{align*}
\|\wt{\zz}\|_{\MR^p(\tau_n, \tau_{n+1},w_{\kappa})} & \leq \|v\|_{\MR^p(0, \tau_{n+1},w_{\kappa})},
\end{align*}
and for the $B\wt{\zz}$-term, by \eqref{eq:KeyestB} applied to $\one_{[\tau_n, \tau_{n+1}]}B$, we have \begin{align*}
 \|B\wt{\zz}\|_{L^p(\tau_n,\tau_{n+1},w_{\kappa};X_0)} & \leq \|\one_{[\tau_n, \tau_{n+1}]}Bv\|_{L^p(0,\tau_{n+1},w_{\kappa};X_0)}\\ 
  &\leq 
 C_{0} \|b\|_{L^q(\tau_{n},\tau_{n+1})} \|v\|_{\MR^p(0,\tau_{n+1},w_{\kappa})}\\
 &\leq (2M)^{-1}\|v\|_{\MR^p(0,\tau_{n+1},w_{\kappa})}.
  \end{align*}

Combining \eqref{eq: est u} with the estimates for $B\wt{\zz}$, $\wt{\zz}$  and $v$, we conclude that 
\begin{align}
\|u\|_{\MR^p(0, \tau_{n+1},w_{\kappa})}&\leq 
\|u\|_{\MR^p(0, \tau_n,w_{\kappa})}+
\|u\|_{\MR^p(\tau_n, \tau_{n+1},w_{\kappa})}\notag\\
&\leq \|u\|_{\MR^p(0, \tau_n,w_{\kappa})}+2\|v\|_{\MR^p(0,\tau_{n+1},w_{\kappa})} 
+ 2M\|f\|_{L^p(0,\tau_{n+1},w_{\kappa};X_0)}\notag\\
&\leq (1+n)\|u\|_{\MR^p(0,\tau_n,w_{\kappa})} + 4M\|f\|_{L^p(0,\tau_{n+1},w_{\kappa};X_0)}. \label{eq: recursive estimate}
\end{align}

Note that the extended $u$ is a strong solution on $[0, \tau_{n+1}]$. Moreover, $u$ is continuous at $\tau_n$, so by \cite[Prop.\ 2.5.9]{HNVWvolume1}, we conclude that $u\in \MR^p(0, \tau_{n+1},w_{\kappa})$. 
To complete the induction step, it remains to prove that $u$ is the  unique strong solution  within $\MR^p(0,\tau_{n+1},\wk)$. Let $u_1,u_2\in \MR^p(0,\tau_{n+1},\wk)$ both be strong solutions. By uniqueness of the solution $u$ on $[0,\tau_n]$, we find $u_1=u_2$ on $[0,\tau_n]$ and  $u_1(\tau_n)=u_2(\tau_n)=u(\tau_n)$ in $\Xtr$ by \eqref{eq:traceembedding0T}. Therefore, defining $w_i \ceqq (u_i -\widetilde{z})\one_{[\tau_n,\tau_{n+1}]}$ for $i=1,2$, we have $w_i  \in \MR^p(0,\tau_{n+1},\wk)$ (since $w_i(\tau_n)=0$) and $w_i=\Phi_n(w_i)$. By uniqueness of the fixed point of $\Phi_n$, we can conclude that $w_1=w_2$, hence $u_1=u_2$ on $[0,\tau_{n+1}]$. 

{\em Step 3:} Let $N$ be the least integer such that $\tau_N = T$. Then from Steps 1 and 2, we can conclude that there exists a unique strong solution $u\in \prescript{}{0}{\MR}^p(0,T,w_{\kappa})$ to $u' +A u +B u  = f$. 
Moreover, the recursive estimate \eqref{eq: recursive estimate} together with \eqref{eq: n=1 estimate} yield
\[
\|u\|_{\MR^p(0,T,w_{\kappa})} \leq 4M\textstyle{\sum_{j=1}^N} \frac{N!}{j!}\|f\|_{L^p(0,T,w_{\kappa};X_0)} \leq 7 N! M\|f\|_{L^p(0,T,w_{\kappa};X_0)}, 
\]
using that $ \sum_{j=1}^N \frac{1}{j!} \leq  e-1 \leq \frac74$. 

It remains to bound $N$ in terms of $\|b\|_{L^q(0,T)}$. To do so, set  $\tau_0 \ceqq 0$ and note that
\begin{align*}
\|b\|_{L^q(0,T)}^q \geq \sum_{n=1}^{N-1} \|b\|_{L^q(\tau_{n-1},\tau_n)}^q = \frac{N-1}{(2 C_{0} M)^q}. 
\end{align*}
Therefore, $N\leq 1+ (2 C_{0} M)^q \|b\|_{L^q(0,T)}^q\eqqc K$. Hence, using the $\Gamma$-function, the required bound holds with  constant  
\[C = 7\Gamma(K+1) M.\] 
\end{proof}

\subsection{Limiting case $p=q$}

The limiting case $q = p$ is significantly more straightforward and is known to experts, at least in the unweighted setting.  We will assume that each $B(t)$ is an element of $\calL(X_{1-\nicefrac{1}{p},p},X_0)$, which is more restrictive than the condition in Theorem \ref{thm:perturbLp}, since $q>p$. On the other hand, we allow for $b\in L^p(0,T)$. 
\begin{theorem}\label{thm:perturbGronwall}
Let $p\in (1, \infty)$ and $\kappa\in [0,p-1)$. Suppose that Assumption \ref{assump:XA} and \eqref{eq:bddcondLpA} hold and that $A$ has maximal $L^p(0,T,w_{\kappa})$-regularity. Let $B\col[0,T]\to \calL(X_{1-\nicefrac{1}{p},p},X_0)$ be strongly measurable in the strong operator topology and suppose that 
$\|B(t)\|_{\calL(X_{1-\nicefrac{1}{p},p},X_0)}\leq b(t)$, where $b\in L^p(0,T)$. Then $A+B$ has maximal $L^p(0,T,w_\kappa)$-regularity with $M_{p,\kappa,A+B}(0,T)\leq C$, where $C$ only depends on $p, \kappa, X_0, X_1,  M_{p,\kappa,A}(0,T)$ and $\|b\|_{L^p(0,T)}$.
\end{theorem}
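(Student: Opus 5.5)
The plan is to derive a Gronwall-type inequality from the fact that, for $q=p$, the trace embedding \eqref{eq:traceembeddingaT} controls $\|v(s)\|_{X_{1-\frac1p,p}}$ pointwise. By Lemma \ref{lem:restriction} we may work on every subinterval $(0,t)$ with constants uniform in $t$. This is also why the condition involves $X_{1-\nicefrac1p,p}$ rather than an intermediate space. For $v\in \prescript{}{0}{\MR}^p(0,t,w_\kappa)$ the weighted version of \eqref{eq:traceembeddingaT}, with a $t$-independent constant $C_1$, gives
\[
s^{\kappa/p}\|v(s)\|_{X_{1-\frac1p,p}}\leq C_1\|v\|_{\MR^p(0,s,w_\kappa)},\qquad s\in(0,t].
\]
Consequently
\[
\|Bv\|_{L^p(0,t,w_\kappa;X_0)}^p\leq \int_0^t b(s)^p s^\kappa\|v(s)\|^p_{X_{1-\frac1p,p}}\dd s\leq C_1^p\int_0^t b(s)^p\|v\|^p_{\MR^p(0,s,w_\kappa)}\dd s .
\]
This bound is what replaces \eqref{eq:KeyestB}. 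It also shows that $A+B$ satisfies \eqref{eq:bddcondLpA}, at least on the zero-trace subspace, which is all that is needed for maximal regularity.

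The first step is the a priori estimate. Suppose $u\in\prescript{}{0}{\MR}^p(0,\tau,w_\kappa)$ is a strong solution of $u'+Au+Bu=f$. Then for each $t\le\tau$ it solves $u'+Au=f-Bu$ on $(0,t)$, so maximal regularity of $A$ on $(0,t)$ (Definition \ref{def:MR}) gives, with $\phi(t)\ceqq\|u\|^p_{\MR^p(0,t,w_\kappa)}$,
\[
\phi(t)\leq 2^{p-1}M^p\|f\|^p_{L^p(0,\tau,w_\kappa;X_0)}+2^{p-1}M^pC_1^p\int_0^t b(s)^p\phi(s)\dd s .
\]
The function $\phi$ is nondecreasing, bounded and measurable. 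Gronwall's inequality with the integrable kernel $b^p$ then yields
\[
\phi(\tau)\leq 2^{p-1}M^p\exp\big(2^{p-1}M^pC_1^p\|b\|^p_{L^p(0,T)}\big)\|f\|^p_{L^p},
\]
which is the estimate \eqref{eq:MRest} with a constant depending only on the allowed quantities. The same inequality with $f=0$ gives uniqueness on every $(0,\tau)$.

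The second step is existence, for which I would reuse the gluing of Theorem \ref{thm:perturbLp}. Partition $[0,T]$ into at most $N\le 1+(2C_1M)^p\|b\|^p_{L^p}$ intervals on which $C_1M\|b\|_{L^p(\tau_n,\tau_{n+1})}\le \tfrac12$. On each interval, first solve the homogeneous problem with initial value $u(\tau_n)\in X_{1-\frac1p,p}$ via Lemma \ref{lem:initial}(2). Then solve for the correction by a contraction on $\prescript{}{0}{\MR}^p(0,\tau_{n+1},w_\kappa)$, using the pointwise estimate above restricted to $[\tau_n,\tau_{n+1}]$. This repeats Steps 1–2 of the previous proof verbatim, with \eqref{eq:KeyestB} replaced by $\|\one_{[\tau_n,\tau_{n+1}]}Bv\|_{L^p(w_\kappa)}\le C_1\|b\|_{L^p(\tau_n,\tau_{n+1})}\|v\|_{\MR^p}$. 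Alternatively, existence can be obtained directly by Picard iteration: the Volterra structure of the bound makes the $k$-th iterate difference bounded by $(C\|b\|_{L^p(0,t)}^p)^k/k!$, so the iteration converges on all of $[0,T]$ at once.

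The main obstacle is justifying the pointwise trace bound uniformly in $s$ for zero-initial-value functions in the weighted setting. I would handle it by extending $v|_{(0,s)}$ by reflection to a function in $\MR^p(0,\infty,w_\kappa)$ vanishing at $0$, with norm comparable uniformly in $s$; this is the $T$-independence remark after \eqref{eq:traceembeddingaT}. The Gronwall step itself is routine because $\phi$ is monotone and finite.
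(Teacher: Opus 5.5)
Your proposal is correct. The a priori estimate is essentially the paper's. The paper also bounds $\|Bu\|^p_{L^p(0,t,w_\kappa;X_0)}$ by $\int_0^t b(s)^p s^\kappa\|u(s)\|^p_{X_{1-\frac1p,p}}\dd s$ and closes with Gronwall. The only cosmetic difference is that the paper keeps $s^\kappa\|u(s)\|^p_{X_{1-\frac1p,p}}$ inside the Gronwall function, whereas you absorb it into $\|u\|^p_{\MR^p(0,s,w_\kappa)}$ via the zero-trace bound, which makes $\phi$ monotone.

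Where you genuinely differ is existence.
\begin{itemize}
\item \emph{The paper} runs the Gronwall argument for $u'+Au+\lambda Bu=f$. It observes that the constant is uniform in $\lambda\in[0,1]$, since $\|\lambda B(t)\|\le b(t)$. It then transfers surjectivity from $\partial_t+A$ to $\partial_t+A+B$ on ${}_0\MR^p(0,\tau,w_\kappa)$ by the method of continuity. Given the estimate this takes two lines, but it relies on an abstract lemma and on the $\lambda$-family.
\item \emph{Your gluing variant} works, but only its local contractions are needed. The recursive energy estimate of Theorem \ref{thm:perturbLp} is superseded by Gronwall, so this is the heaviest of the options.
\item \emph{Your Picard variant} is the most self-contained. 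The same Volterra inequality gives $\|u_{k+1}-u_k\|^p_{\MR^p(0,T,w_\kappa)}\le \|u_1\|^p_{\MR^p(0,T,w_\kappa)}\,(M^pC_1^p\|b\|^p_{L^p(0,T)})^k/k!$, so the iteration converges on all of $[0,T]$ at once. You only have to add that $\MR^p$-convergence of zero-trace functions yields $Au_k\to Au$ and $Bu_k\to Bu$ in $L^p(0,T,w_\kappa;X_0)$, by \eqref{eq:bddcondLpA} and your trace bound. Hence the limit is a strong solution.
\end{itemize}

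As a side remark, the $s$-uniform trace constant you single out as the main obstacle is already supplied by the remark after \eqref{eq:traceembeddingaT}. If you construct the extension yourself, reflect only the part of $v$ on $(s/2,s)$. Reflecting all of $(0,s)$ about $s$ does not respect the weight $t^\kappa$ near $0$. You then control the cutoff derivative by Hardy's inequality for zero-trace functions, which uses $\kappa<p-1$.
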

We include a short proof based on the method of continuity and Gronwall's lemma. 
\begin{proof}
Fix an arbitrary $\lambda \in [0,1]$. Suppose that $u\in \MR^p(0,\tau,w_{\kappa})$ is a solution to 
\begin{equation*}
\left\{
\begin{aligned}
    u' + A u + \lambda B u &= f, \\
    u(0) &= 0.
\end{aligned}
\right.
\end{equation*}
Observe that 
\begin{align}\label{eq:Bu in Lp}
  \|B u\|_{L^p(0,t,w_{\kappa};X_0)}^p \leq  \int_0^t b(s)^p s^{\kappa} \|u(s)\|_{X_{1-\nicefrac{1}{p},p}}^p \dd s<\infty,
\end{align}
where the finiteness holds by \eqref{eq:traceembeddingaT} and since $b\in L^p(0,T)$.  

By maximal $L^p(0,T,w_{\kappa})$-regularity of $A$ with constant $M$, and by applying \eqref{eq:traceembeddingaT} (with embedding constant uniform in $\tau$ for functions vanishing at zero), we thus have for any $t\in (0,T]$: 
\begin{align*}
\|u\|_{\MR^p(0,t,w_{\kappa})} + \|u\|_{C_{\nicefrac{\kappa}{p}}((0,t];X_{1-\frac{1}{p},p})} \leq C M \|B u\|_{L^p(0,t,w_{\kappa};X_0)} + C M\|f\|_{L^p(0,t,w_{\kappa};X_0)}.
\end{align*}
Taking $p$-th powers and using \eqref{eq:Bu in Lp}, it follows that 
\begin{align*}
& \|u\|_{\MR^p(0,t,w_{\kappa})}^p + \|u\|_{C_{\nicefrac{\kappa}{p}}((0,t];X_{1-\frac{1}{p},p})}^p\\  
& \leq 2^{p-1} C^p M^p \|f\|_{L^p(0,t,w_{\kappa};X_0)}^p + 2^{p-1} C^p M^p\int_0^t b(s)^p s^{\kappa} \|u(s)\|_{X_{1-\nicefrac{1}{p},p}}^p \dd s. 
\end{align*}
By Gronwall's lemma applied to $\phi(s) \ceqq \|u\|_{\MR^p(0,s,w_{\kappa})}^p + s^{\kappa} \|u(s)\|^p_{X_{1-\nicefrac{1}{p},p}}$, it follows that for all $t\in [0,T]$: 
\begin{align*}
\phi(t) \leq 2^{p-1} C^pM^p \|f\|_{L^p(0,t,w_{\kappa};X_0)}^p \exp(2^{p-1} C^p M^p \|b\|_{L^p(0,t)}^p). 
\end{align*}
In particular, this shows that 
\begin{align}\label{eq:est Llambda}
\|u\|_{\MR^p(0,t,w_{\kappa})} \leq 2 C M \exp(p^{-1} 2^{p-1} C^pM^p \|b\|_{L^p(0,t)}^p) \|f\|_{L^p(0,t,w_{\kappa};X_0)}, 
\end{align}
which is independent of $\lambda$. 

It remains to apply the method of continuity (see \cite[Lem.\ 16.2.2]{HNVWvolume3}) to the mapping $L_{\lambda}:\prescript{}{0}{\MR}^{p}(0,\tau,w_{\kappa})\to L^p(0,\tau,w_{\kappa};X_0)$ given by $L_{\lambda} u = u'+Au +\lambda B u$, where $\tau\in (0,T]$ is fixed. This gives surjectivity of $L_1$, hence the existence of solutions. Uniqueness and the upper bound for $M_{p,\kappa,A+B}(0,T)$ follow from \eqref{eq:est Llambda}. 
\end{proof}

\section{Perturbations beyond maximal $L^p$-regularity}\label{sec:linear adm} 

Throughout this section, we assume that $\gamma_*\in (0,1]$ and $X_0$ and $X_{1+\gamma_*}$ are Banach spaces such that $X_{1+\gamma_*}\hookrightarrow X_0$ densely and continuously. 
We assume that  $\{X_\gamma:\gamma\in(0,1+\gamma_*)\}$ is a `reasonable intermediate scale' of Banach spaces, in the sense that
\begin{equation}\label{eq:Xgamma squeeze}
X_{\gamma,1}\into X_\gamma \into X_{\gamma,\infty} \qquad\text{ for all }\gamma\in(0,1+\gamma_*). 
\end{equation}
Here, for $\gamma\in(0,1+\gamma_*)$ and $q\in[1,\infty]$,  $X_{\gamma,q}$ denotes 
the following real interpolation space:
  \begin{equation}\label{eq:def Xgammaq}
      X_{\gamma,q}\ceqq (X_0,X_{1+\gamma_*})_{\frac{\gamma}{(1+\gamma_*)},q}. 
  \end{equation}
  
In particular, the theory in this section can be applied with $X_\gamma$ defined as the complex interpolation space $[X_0,X_{1+\gamma_*}]_{\gamma/(1+\gamma_*)}$ or as the real interpolation space $(X_0,X_{1+\gamma_*})_{\gamma/(1+\gamma_*),q}$ with $q\in[1,\infty]$ or even as the fractional domains of a sectorial operator.

\subsection{A mixed-scale problem} 

For $f\in L^1(0,T;X_0)$, we revisit the problem \eqref{eq:parabolic_problem_prel} and its perturbed version: 
\begin{equation}
\label{eq:parabolic_problem_intro}
\left\{
\begin{aligned}  
u' + (A+B) u= f,\\
u(0)=u_0.
\end{aligned}
\right.
\end{equation}
We consider {\em strong solutions}  as defined  in Subsection \ref{sub: prelims max Lp reg}.
 
In the classical theory of maximal $L^p$-regularity, one considers $f\in L^p(0,T,\wk;X_0)$ and seeks solutions $u$ in the maximal regularity space $\MR^p(0,T,\wk)$ as defined in \eqref{eq:def MRp}. 

In this section, we consider inhomogeneities $f$ belonging to a sum space larger than $L^p(0,T,\wk;X_0)$. Recall that a $k$-tuple of Banach spaces $(Y_1,\ldots,Y_k)$ each continuously embedded into a given Hausdorff topological vector space $\mathcal{H}$ (i.e.\ the {\em ambient space}), 
\[
Y_+ := \plus_{i=1}^k Y_i\ceqq \{y\in \mathcal{H}: y=\textstyle{\sum_{i=1}^{k}}y_i \text{ in }\mathcal{H},\,  y_i\in Y_i \text{ for }i=1,\ldots,k\},
\]
is equipped with the sum norm:
\begin{equation}\label{eq:def sum norm}
  \|y\|_{Y_+}\ceqq \inf\{\textstyle{\sum_{i=1}^{k}}\|y_i\|_{Y_i}: y=\textstyle{\sum_{i=1}^{k}}y_i \text{ in }\mathcal{H},\,  y_i\in Y_i \text{ for }i=1,\ldots,k \}. 
\end{equation} 

We define the sum space for inhomogeneities as follows: 
\begin{align}\label{eq:def sum sp f}  
\x(s,t)\ceqq L^p(s,t,\wk;X_0)+\,{\plus_{i=1}^k}L^{r_i}(s,t,w_{\nu_i};X_{\gamma_i}) 
\end{align} 
with ambient space $L^1(s,t;X_0)$, for suitable parameters $(r_i,\nu_i,\gamma_i)$, which will be specified below. Let us emphasize that we still  write $w_\mu(t)= |t|^\mu$, 
so the weights are not shifted throughout this section. 

We will  consider perturbations of the form $B=\sum_{i=1}^k B_i$, where $B_i\col [0,T]\to \calL(X_1,X_{\gamma_i})$. Note that there is a shift compared to Section \ref{sec:PS extension} (where $B$ maps into $X_0$). 
As noted in the introduction, this framework is motivated by the skeleton equations arising in the weak convergence approach to large deviations for stochastic evolution equations.

To handle such $B$ and $f\in \x(0,T)$, we introduce the mixed-scale maximal regularity space $\MR_+(0,T)$. For $0\leq s<t\leq T$, we define
\begin{align}\label{eq: def MR space}  
\begin{split}
  \MR_+(s,t) \ceqq & \Big(L^p(s,t,\wk; X_1)\cap W^{1,p}(s,t,\wk;X_0)\Big) \\
   &+\, \plus_{i=1}^k \Big( L^{r_i}(s,t,\wni; X_{1+\gamma_i})\cap W^{1,r_i}(s,t,\wni;X_{\gamma_i})\Big),  
\end{split}
\end{align} 
again with ambient space $L^1(s,t;X_0)$.

The aim is to prove well-posedness of \eqref{eq:parabolic_problem_intro} and regularity estimates for   strong solutions $u\in\MR_+(0,T)$, under suitable assumptions on $A$, $B$, and $f$, which will be discussed in Subsection \ref{sub:lin results}. 

We require the parameters $(r_i,\nu_i,\gamma_i)$ appearing in $\x(0,T)$ and $\MR_+(0,T)$ to be \emph{ $(p,\kappa)$-admissible}, which is defined next. In Corollary \ref{cor: emb}, it will be shown that $(p,\kappa)$-admissibility ensures that the solution space $\MR_+(0,T)$  embeds into $L^p(0,T,\wk;X_1)$ and into certain spaces of continuous functions.

\begin{definition}[$(p,\kappa)$-admissibility]\label{def:admissible}
Let $p\in (1,\infty)$ and $\kappa\geq 0$. A triple $(r,\nu,\gamma)$ is $(p,\kappa)$-admissible if $\gamma\in(0,1)\cap(0,\gamma_*]$, $\nu\geq 0$   and 
\begin{align*}
    \nu+1 <r \leq p, \qquad
    \frac{\kappa}{p}\leq \frac{\nu}{r}, \qquad 
  \frac{1+\kappa}{p}<\frac{1+\nu}{r}\leq \frac{1+\kappa}{p}+\gamma.    
\end{align*}
\end{definition}  

\begin{remark}\label{rem:gamma=0 separate}
In the definition above, we require $\gamma\in(0,1)\cap(0,\gamma_*]$, so we can use the spaces $X_{\gamma,q}$ and $X_{1+\gamma,q}$ defined in \eqref{eq:def Xgammaq} (which are e.g.\ essential in the proof of Proposition \ref{prop: emb Lr}). However,  elements of $\x(0,T)$ and $\MR_+(0,T)$ include an $X_0$-valued part (see \eqref{eq:def sum sp f} and \eqref{eq: def MR space}), which will be treated via classical maximal $L^p$-regularity techniques. The techniques for the $X_0$-valued part cannot be used for the $X_\gamma$-valued parts with $\gamma>0$ and vice versa. Consequently, we distinguish between these components in the definitions and proofs.
\end{remark}
\begin{remark}\label{rem:LDP}
In applications to large deviations,   $(p,\kappa)$ is given, and $\gamma=1/2$, and the parameters $(r,\nu)$ of Definition \ref{def:admissible} will be such that $\frac{\kappa}{p}= \frac{\nu}{r}$. Note that in that case the last condition of the definition becomes $\frac{1}{r}\leq \frac{1}{p}+\frac12$. 
\end{remark}

\subsection{Embeddings of the mixed-scale maximal regularity space}

Here, we will prove some embeddings for the space $\MR_+(0,T)$ (defined in \eqref{eq: def MR space}) that  will be essential for establishing our global well-posedness result  for the evolution equation \eqref{eq:parabolic_problem_intro} (Theorem \ref{th: well-posed linear}). 

To begin, we show that the $L^r\cap W^{1,r}$-parts of the space $\MR_+(0,T)$ embed into $L^p(0,T,\wk;X_1)$, under the $(p,\kappa)$-admissibility assumption.

\begin{proposition}\label{prop: emb Lr}
  Let $p\in (1,\infty)$ and let $\kappa\in[0,p-1)$. 
Let $(r, \nu, \gamma)$ be $(p,\kappa)$-admissible.  
Then,   
\begin{equation}\label{eq:emb to show}
\begin{split}
  L^r(0,T&,\wn;X_{1+\gamma})\cap W^{1,r}(0,T,\wn;X_{\gamma}) \into L^{p}(0,T,\wk; X_1).  
  \end{split}
\end{equation} 
The operator norm of the embedding depends on $p,\kappa,r,\nu,\gamma$, and $T$, and blows up if $T\downarrow 0$, unless we assume that the functions vanish at $t=0$. 
\end{proposition}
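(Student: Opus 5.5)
The plan is to realise \eqref{eq:emb to show} as a \emph{temporal Sobolev embedding} after trading some of the spatial regularity of $X_{1+\gamma}$ for time regularity. The parameters fit this: a function in $L^r(0,T,\wn;X_{1+\gamma})\cap W^{1,r}(0,T,\wn;X_\gamma)$ should have, by the mixed-derivative principle, roughly $\gamma$ derivatives in time with values in the level-$1$ space. Weighted Sobolev embedding from $(r,\nu)$ to $(p,\kappa)$ costs $\frac{1+\nu}{r}-\frac{1+\kappa}{p}\le \gamma$ derivatives, which is exactly the last condition of Definition~\ref{def:admissible}. The conditions $r\le p$ and $\frac{\kappa}{p}\le\frac{\nu}{r}$ are the usual ones for power-weighted embeddings to go in the right direction, and $\nu+1<r$ (an $A_r$-type condition) makes the weight usable.

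\textbf{Step 1 (reduction and extension).} By \eqref{eq:Xgamma squeeze} I may replace $X_{1+\gamma}$ and $X_\gamma$ by the larger spaces $X_{1+\gamma,\infty}$ and $X_{\gamma,\infty}$ on the left-hand side, and the target $X_1$ by the smaller space $X_{1,1}$. Thus everything lives in the real interpolation scale of the pair $(X_0,X_{1+\gamma_*})$, and reiteration (\cite[Th.\ L.3.1]{HNVWvolume3}) is available. First I treat functions vanishing at $t=0$, extend them by zero to $\R_+$, and work on $(0,\infty)$. This gives $T$-independent constants. For general $u$, I subtract a fixed cut-off near $0$ times an extension of $u(0)$, or reflect at $t=T$; this is where the constant becomes $T$-dependent and blows up as $T\downarrow 0$.

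\textbf{Step 2 (main estimate).} I would avoid the Besov route $B^{\gamma}_{r,r}(\wn;X_{1,r})$, since $X_{1,r}\not\hookrightarrow X_1$ in general. Instead I use a pointwise interpolation inequality combined with a weighted Hardy/trace bound. By reiteration,
\[
\|x\|_{X_{1,1}}\lesssim \|x\|_{X_{1+\gamma,\infty}}^{1-\theta}\,\|x\|_{Y}^{\theta},
\]
where $Y$ is a lower space at level $1+\gamma-\frac{1+\nu}{r}$, more precisely the weighted trace space $(X_{\gamma},X_{1+\gamma})_{1-\frac{1+\nu}{r},r}$ from \eqref{eq:traceembedding0T}. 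Here $\theta$ is fixed by the requirement that level $1$ is the convex combination: $1=(1-\theta)(1+\gamma)+\theta(1+\gamma-\frac{1+\nu}{r})$, i.e.\ $\theta=\gamma r/(1+\nu)$. The case $\gamma\ge \frac{1+\nu}{r}$ has to be handled separately; there I use the $C_{\nu/r}$-trace \eqref{eq:traceembeddingaT} instead.

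Next I use the sharper fact that the map $t\mapsto t^{\beta}\|u(t)\|_{Y_t}$ is controlled in a Lorentz/Hardy sense rather than only in $L^\infty$. Via the trace method, $u(t)$ is controlled in $(X_\gamma,X_{1+\gamma})$-interpolation spaces by averages of $u'$ and $u$ over $[t/2,t]$. The bound then follows by Hölder's inequality in $t$ with the weights $w_\kappa$ and $w_\nu$. Counting exponents, the Hölder pairing closes precisely when $\frac{1+\nu}{r}-\frac{1+\kappa}{p}\le\gamma$ and $\frac{\kappa}{p}\le\frac{\nu}{r}$.

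\textbf{Main obstacle.} The endpoint $\frac{1+\nu}{r}=\frac{1+\kappa}{p}+\gamma$ leaves no slack, and the target is $X_1$ (fine index) rather than $X_{1,p}$ or $X_{1,\infty}$. I would first try to absorb the loss using $r<p$ where possible. If this fails when $r=p$, I would exploit the Franke--Jawerth-type gain: $B^{s_0}_{r,p}\hookrightarrow F^{s_1}_{p,1}$ in the vector-valued weighted setting. This yields an $X_{1,1}$-valued $L^p$ function via the square-function-free characterisation $F^{0}_{p,1}(X)\hookrightarrow L^p(X_{1,1})$ for the appropriate scale. Verifying this weighted endpoint embedding carefully is where I expect most of the work.
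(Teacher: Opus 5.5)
Your Step 2 is not yet a proof. The case you defer at the end, $\frac{1+\nu}{r}=\frac{1+\kappa}{p}+\gamma$, carries the whole content of the proposition: when $\frac{1+\nu}{r}<\frac{1+\kappa}{p}+\gamma$ one simply lowers $\gamma$ to $\frac{1+\nu}{r}-\frac{1+\kappa}{p}$.

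\emph{Where your interpolation breaks.} At this endpoint, your choice $Y=X_{1+\gamma-\frac{1+\nu}{r},r}$, $\theta=\frac{\gamma r}{1+\nu}$ puts $\|u(t)\|_{X_{1+\gamma}}$ into $\int_0^T t^{\kappa}\|u(t)\|_{X_1}^p\dd t$ with power $(1-\theta)p=\frac{(1+\kappa)r}{1+\nu}$.
\begin{itemize}
\item For $\kappa=\nu$ this power equals $r$, and the uniform trace bound of Lemma \ref{lem:trace} finishes the argument.
\item For $\kappa>\nu$ it exceeds $r$, so no H\"older pairing with $\|u\|_{L^r(0,T,\wn;X_{1+\gamma})}$ is possible.
\item For $\kappa<\nu$, pairing with $\sup_t\|u(t)\|_Y$ leaves the divergent factor $\int_0^T t^{-1}\dd t$.
\end{itemize}
The case $\kappa\geq\nu$ can be repaired. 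Redo the computation with the weighted traces $\sup_t t^{\beta}\|u(t)\|_{X_{\ell,r}}\lesssim\|u\|$ obtained by interpolation in the proof of Lemma \ref{lem:trace}, taking $\beta=\frac{\kappa-\nu}{p-r}$ and $\ell=1-\frac{1+\kappa}{p}+\beta$. Then the powers become exactly $r$ and $p-r$, and the weights match. For $\kappa<\nu$, however, every member of that trace family again leaves $\int_0^T t^{-1}\dd t$. This case includes every admissible triple with $r=p$, and there you need a genuine integrability gain, for instance $\int_0^T\|u(t)\|_Y^m\,\frac{\dd t}{t}\lesssim\|u\|^m$. That is your unspecified ``Lorentz/Hardy'' step. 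It is false whenever $u(0)\neq 0$, and it is nowhere established. So the claim that the H\"older pairing closes precisely under admissibility is unsubstantiated.

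\emph{The missing idea.} You need to interpolate frequency by frequency rather than pointwise in $t$, which is exactly the Besov route you discarded. Your objection applies to real interpolation of $L^r(X_{1+\gamma})$ and $W^{1,r}(X_\gamma)$, which does produce $X_{1,r}$. The paper instead uses the Besov mixed-derivative embedding \cite[Th.\ 3.1]{MV14traces}:
$B^{s+1}_{\hat r\hat r}(\R,w_{\hat\nu};X_\gamma)\cap B^{s}_{\hat r\hat r}(\R,w_{\hat\nu};X_{1+\gamma})\into B^{s+\gamma}_{\hat r\hat r}(\R,w_{\hat\nu};X_1)$.
This needs only $(X_\gamma,X_{1+\gamma})_{1-\gamma,1}\into X_1$, so $X_{1,r}$ never appears. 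The loss of the fine index is recovered by weighted Sobolev embeddings with microscopic improvement \cite[Th.\ 1.2]{MV12sharp}, applied on both sides with $s<0$:
\begin{itemize}
\item before: $W^{1,r}(\R,\wn;X_\gamma)\into F^1_{r,\infty}(\R,\wn;X_\gamma)\into B^{s+1}_{\hat r\hat r}(\R,w_{\hat\nu};X_\gamma)$, and similarly at order $0$ for $X_{1+\gamma}$;
\item after: $B^{s+\gamma}_{\hat r\hat r}(\R,w_{\hat\nu};X_1)\into F^0_{p,1}(\R,\wk;X_1)\into L^p(\R,\wk;X_1)$.
\end{itemize}
Both steps have slack because the strict inequality $\frac{1+\kappa}{p}<\frac{1+\nu}{r}$ leaves room for an intermediate $(\hat r,\hat\nu)$ with $\frac{1+\kappa}{p}<\frac{1+\hat\nu}{\hat r}<\frac{1+\nu}{r}$. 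This works even at the endpoint and even when $r=p$. In the latter case the gain comes from the weights ($\kappa<\nu$), not from the integrability exponents. That is why the unweighted Franke--Jawerth embedding in your fallback cannot help there: for $r=p$ it would read $B^{s}_{p,p}\into F^{s}_{p,1}$, which is false. Finally, extending to $\R$ by reflection, as the paper does, avoids having to split off $u(0)$ using an extension already known to lie in $L^p(0,T,\wk;X_1)$.
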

\begin{proof}  
Firstly, we note that due to \cite[Prop.\ L.4.5]{HNVWvolume3} and by  reflection (see also \cite[Prop.\ 2.5]{AV22nonlinear1}), there exists a bounded extension operator 
\begin{align*}
\mathrm{E}\col L^r(0,T ,\wn;X_{1+\gamma})&\cap W^{1,r}(0,T,\wn;X_{\gamma})\to L^r(\R,\wn;X_{1+\gamma})\cap W^{1,r}(\R,\wn;X_{\gamma}) 
\end{align*}
satisfying $[\mathrm{E}f]|_{[0,T]}=f$. 
By composing with $\mathrm{E}$ and restricting to $[0,T]$ at the very end, it thus suffices to prove the following scheme of embeddings: 
\begin{align}
&\begin{cases}
&W^{1,r}(\R,\wn;X_{\gamma})\into B_{\hat{r}\hat{r}}^{s+1}(\R,w_{\hat{\nu}};X_{\gamma})\\
&L^{r}(\R,\wn;X_{1+\gamma})\into B_{\hat{r}\hat{r}}^{s}(\R,w_{\hat{\nu}};X_{1+\gamma})
\end{cases}\label{eq:emb1}\\[.25em]
&B_{\hat{r}\hat{r}}^{s+1}(\R,w_{\hat{\nu}};X_{\gamma})\cap B_{\hat{r}\hat{r}}^{s}(\R,w_{\hat{\nu}};X_{1+\gamma}) \into B_{\hat{r}\hat{r}}^{\hat{s}}(\R,w_{\hat{\nu}};X_{1})
\label{eq:emb2}\\[.25em]
&B_{\hat{r}\hat{r}}^{\hat{s}}(\R,w_{\hat{\nu}};X_{1})\into L^{p}(\R,\wk; X_1) 
\label{eq:emb3}
\end{align}
for some $s<0$. 
In this embedding scheme, we factorize through Besov spaces to utilize a mixed time-space derivative embedding \eqref{eq:emb2} with microscopic improvement. This allows us to eliminate the real interpolation parameter $\infty$.
Moreover, \eqref{eq:emb1} and \eqref{eq:emb3} will be proved using weighted Sobolev embeddings.
  
Combining \eqref{eq:emb1}--\eqref{eq:emb3} at once yields \eqref{eq:emb to show}, so it remains to show that there exist parameters $s,\hat{s},\hat{r},\hat{\nu}$  such that the embeddings are valid. 

We begin with \eqref{eq:emb1}. First, observe that due to \cite[Prop.\ 3.12]{MV12sharp}:
\begin{align*}
\begin{cases}
&W^{1,r}(\R,\wn;X_{\gamma})\into  F_{r\infty}^1(\R,\wn;X_{\gamma})\\
&L^{r}(\R,\wn;X_{1+\gamma})\into F_{r\infty}^0(\R,\wn;X_{1+\gamma}).
\end{cases}
\end{align*}
Also, we recall that $B_{rr}^s=F_{rr}^s$. 
Therefore, \eqref{eq:emb1} follows from the above combined with the weighted Sobolev embeddings with microscopic improvement for $F$-spaces from  \cite[Th.\ 1.2]{MV12sharp}, imposing for $i=0,1$: 
\begin{equation}\label{eq:emb1cond}
\hat{r}\geq r, \quad \frac{\hat{\nu}}{\hat{r}}\leq \frac{\nu}{r}, \quad \frac{1+\hat{\nu}}{\hat{r}}< \frac{1+\nu}{r}, \quad  -\frac{1+\nu}{r}\geq s-\frac{1+\hat{\nu}}{\hat{r}}.
\end{equation}
This in particular implies that the conditions in \cite[(1.6)]{MV12sharp} are satisfied for the following parameters for $i=0,1$ (the parameters with tilde referring to those in \cite{MV12sharp}): $\tilde{d}=1$, $\tilde{\gamma}_0=\nu>-1$, $\tilde{\gamma}_1=\hat{\nu}>-1$, $\tilde{s}_0=1-i$, $\tilde{s}_1=s+1-i$, $\tilde{p}_0=r$, $\tilde{p}_1=\hat{r}$, $\tilde{q}_0=\infty$, $\tilde{q}_1=\hat{r}$. Thus, if \eqref{eq:emb1cond} is satisfied, then \eqref{eq:emb1} is valid. At the end of this proof, we will show that the range of parameters is non-empty. Note that \eqref{eq:emb1cond} implies that $s\leq 0$. 

Next, for \eqref{eq:emb2} we apply \cite[Th.\ 3.1]{MV14traces} with $\tilde{\theta}=1-\gamma$ and spaces $\tilde{X}_0=X_{\gamma}$, $\tilde{X}_1=X_{1+\gamma}$, $\tilde{X}_{\tilde{\theta}}=X_1$. These 
 spaces satisfy \cite[(3.1)]{MV14traces}, since we have by virtue of \eqref{eq:Xgamma squeeze} and reiteration (see \cite[Th.\ L.3.1]{HNVWvolume3}):
\[
(X_{\gamma},X_{1+\gamma})_{\tilde{\theta},1}\into (X_{\gamma,\infty},X_{1+\gamma,\infty})_{\tilde{\theta},1}= X_{1,1}\into X_1. 
\] 
Moreover, we use the following  parameters for the application of \cite[Th.\ 3.1]{MV14traces}: $\tilde{\alpha}=1$, $\tilde{s}=s$, $\tilde{p}_0=\tilde{q}_0=\hat{r}$, $\tilde{p}_1=\tilde{q}_1=\hat{r}$, $\tilde{p}=\tilde{q}=\hat{r}$, $\tilde{w}_0=w_{\hat{\nu}}$, $\tilde{w}_1=w_{\hat{\nu}}$, $\tilde{w}_\theta=w_{\hat{\nu}}$. These parameters  satisfy the conditions of \cite[Th.\ 3.1]{MV14traces} for the embedding \eqref{eq:emb2} if and only if  
 \begin{equation}\label{eq:emb2cond}
s+\gamma=\hat{s}, 
\end{equation}
where we note that the correct condition for the weights is $\tilde{w}_0^{({1-\tilde{\theta}})/{\tilde{p}_0}}\tilde{w}_1^{{\tilde{\theta}}/{\tilde{p}_1}}
=\tilde{w}_\theta^{{1}/{\tilde{p}}}$, see also \cite[Th.\ 3.12]{ALV23}. 

Finally, we turn to \eqref{eq:emb3}. We use again that $B_{rr}^s=F_{rr}^s$ and apply \cite[Prop.\ 3.12: (3.10)]{MV12sharp} (used for $F_{p,1}^0\into L^p$) and \cite[Th.\ 1.2]{MV12sharp}, imposing that  
 \begin{equation}\label{eq:emb3cond}
p\geq \hat{r}, \quad
\frac{\kappa}{p}\leq \frac{\hat{\nu}}{\hat{r}},\quad \frac{1+\kappa}{p}<\frac{1+\hat{\nu}}{\hat{r}}, 
\quad \hat{s}-\frac{1+\hat{\nu}}{\hat{r}}\geq 0-\frac{1+\kappa}{p}.
\end{equation}

Now,   conditions \eqref{eq:emb1cond}--\eqref{eq:emb3cond} can be rewritten as 
\begin{equation}\label{eq:embconds} 
\begin{cases}
&p\geq \hat{r}\geq r, \\
& \frac{\kappa}{p}\leq \frac{\hat{\nu}}{\hat{r}}\leq \frac{\nu}{r}, \\
&\frac{1+\kappa}{p}<\frac{1+\hat{\nu}}{\hat{r}}<\frac{1+\nu}{r} \\
&s\leq \frac{1+\hat{\nu}}{\hat{r}}-\frac{1+\nu}{r}, \\ 
& s\geq-\gamma+\frac{1+\hat{\nu}}{\hat{r}}  -\frac{1+\kappa}{p},\\
&s+\gamma= \hat{s}. 
\end{cases}
\end{equation}
Combining the lower and upper bounds for $s$, we conclude that 
there   exist $s,\hat{s},\hat{r},\hat{\nu}$ satisfying \eqref{eq:embconds} if and only if:  
\begin{equation}\label{eq:adm for emb}
  p\geq r,\quad\frac{\kappa}{p}\leq \frac{\nu}{r},\qquad \frac{1+\kappa}{p}< \frac{1+\nu}{r},\qquad \frac{1+\nu}{r}\leq \gamma+\frac{1+\kappa}{p}.  
\end{equation}
This last condition is satisfied due to the $(p,\kappa)$-admissibility of $(r,\nu,\gamma)$.  
\end{proof}

Next, we will prove trace embeddings. The embedding into $C_{\nicefrac{\kappa}{p}}((0,T];\Xtrzero)$ plays a crucial role in the proof of our global well-posedness result, Theorem \ref{th: well-posed linear}.

\begin{lemma}\label{lem:trace}  
Let $(r,\nu, \gamma)$ be $(p,\kappa)$-admissible. Then 
\begin{align*}
Z_r& \coloneqq  L^r(0,T,w_{\nu};X_{1+\gamma})\cap W^{1,r}(0,T,w_{\nu};X_{\gamma})
\\ & \hookrightarrow C([0,T];X_{1-\frac{1+\kappa}{p},r})\cap C_{\nicefrac{\kappa}{p}}((0,T];X_{1-\frac{1}{p},r})
\\ & \hookrightarrow C([0,T];\Xtr)\cap C_{\nicefrac{\kappa}{p}}((0,T];\Xtrzero).
\end{align*}
\end{lemma}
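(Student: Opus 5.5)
The plan is to apply the weighted trace theory quoted as \eqref{eq:traceembedding0T}--\eqref{eq:traceembeddingaT} to the couple $(X_\gamma,X_{1+\gamma})$ instead of $(X_0,X_1)$, and then to pass from the resulting trace spaces to the ones in the statement. Applied with exponent $r$ and weight $w_\nu$ (note $\nu<r-1$ by admissibility), \cite[Cor.\ L.4.6]{HNVWvolume3} gives
\[
Z_r\hookrightarrow C([0,T];(X_\gamma,X_{1+\gamma})_{1-\frac{1+\nu}{r},r})\cap C_{\nicefrac{\nu}{r}}((0,T];(X_\gamma,X_{1+\gamma})_{1-\frac1r,r}).
\]
By \eqref{eq:Xgamma squeeze} and reiteration (\cite[Th.\ L.3.1]{HNVWvolume3}), as in the proof of Proposition \ref{prop: emb Lr}, we have $(X_\gamma,X_{1+\gamma})_{\theta,r}\hookrightarrow (X_{\gamma,\infty},X_{1+\gamma,\infty})_{\theta,r}=X_{\gamma+\theta,r}$, where $X_{\gamma+\theta,r}$ is the real interpolation space from \eqref{eq:def Xgammaq}. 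This holds for $\theta\in(0,1)$, and $\gamma+\theta<1+\gamma_*$ since $\gamma\le\gamma_*$. Hence $Z_r\hookrightarrow C([0,T];X_{1+\gamma-\frac{1+\nu}{r},r})\cap C_{\nicefrac{\nu}{r}}((0,T];X_{1+\gamma-\frac1r,r})$.

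Next I compare indices using admissibility. From $\frac{1+\nu}{r}\le\frac{1+\kappa}{p}+\gamma$ we get $1+\gamma-\frac{1+\nu}{r}\ge 1-\frac{1+\kappa}{p}$. Therefore $X_{1+\gamma-\frac{1+\nu}{r},r}\hookrightarrow X_{1-\frac{1+\kappa}{p},r}$: this is trivial for equal indices, and otherwise it is the standard monotone embedding $X_{\theta_1,r}\hookrightarrow X_{\theta_0,r}$ for $\theta_1>\theta_0$, valid on $[0,T]$ because $X_{1+\gamma_*}\hookrightarrow X_0$. This yields the first component.

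The weighted component is the step I expect to need care. I will work directly with the trace method rather than comparing $C_{\nu/r}$ with $C_{\kappa/p}$ abstractly. For $t\in(0,T]$, restrict $u$ to $[t/2,t]$ and reflect or extend. On this interval the weight $w_\nu$ is comparable to $t^\nu$, so the unweighted trace estimate on an interval of length $t/2$, after scaling, gives
\[
\|u(t)\|_{(X_\gamma,X_{1+\gamma})_{\theta,r}}\lesssim t^{-\frac{\nu}{r}}\,t^{\frac1r-\theta'}\|u\|_{Z_r}
\]
for an appropriate $\theta$, via the $K$-functional for $\theta$ slightly below $1-\frac1r$. Choose $\theta=1-\frac1p-\gamma+\epsilon$ so that $\gamma+\theta\ge 1-\frac1p$. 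Its scaling exponent is $\frac{1+\nu}{r}-\gamma-\frac1p\le\frac{\kappa}{p}$, so $t^{\kappa/p}\|u(t)\|$ stays bounded; at worst there is an extra factor $T^{\text{power}}$. Continuity on $(0,T]$ is inherited from the unweighted embedding on $[t/2,T]$.

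The last embedding uses $r\le p$, which gives $X_{\theta,r}\hookrightarrow X_{\theta,p}$.
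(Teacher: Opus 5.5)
Your first component and the final embedding are correct and match the paper. That covers the trace embedding for the couple $(X_\gamma,X_{1+\gamma})$ with weight $w_\nu$, reiteration, the index comparison $1+\gamma-\frac{1+\nu}{r}\ge 1-\frac{1+\kappa}{p}$, and $X_{\lambda,r}\hookrightarrow X_{\lambda,p}$ for $r\le p$.

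The weighted component has a genuine gap. Restricting $u$ to $[t/2,t]$ and rescaling cannot yield a bound $t^{-\mu}\|u\|_{Z_r}$ with $\mu\le\kappa/p$. The spaces $X_{\lambda,r}$ carry inhomogeneous norms, so rescaling time does not act on them by a power of $t$. The low-frequency part of $\|u(t)\|_{X_{\lambda,r}}$ (the part of $K(s,u(t))$ with $s\gtrsim t$, essentially $\|u(t)\|_{X_\gamma}$) is controlled by $u|_{[t/2,t]}$ only with an extra loss $t^{-1/r}$ on top of the weight factor $t^{-\nu/r}$. Constants show this loss is unavoidable. For $u\equiv x\in X_{1+\gamma}$ you have $u'=0$ and $\|u\|_{L^r(t/2,t,w_\nu;X_{1+\gamma})}\eqsim t^{\frac{1+\nu}{r}}\|x\|_{X_{1+\gamma}}$. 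On the other hand, $t^{\kappa/p}\|u(t)\|_{X_{1-\frac1p,r}}=t^{\kappa/p}\|x\|_{X_{1-\frac1p,r}}$, and $\frac{1+\nu}{r}>\frac{1+\kappa}{p}>\frac{\kappa}{p}$ by admissibility. So any estimate that only sees $u$ on $[t/2,t]$ degenerates like $t^{\kappa/p-\frac{1+\nu}{r}}$ as $t\downarrow 0$. Your exponent $\frac{1+\nu}{r}-\gamma-\frac1p$ is the homogeneous scaling heuristic. The corresponding inequality with the global norm is true, but only because of information from all of $[0,t]$, which your argument discards.

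Separately, the shift $+\epsilon$ fails in the borderline case $\frac{1+\nu}{r}=\frac{1+\kappa}{p}+\gamma$, which is the case of Remark \ref{rem:LDP}. There the exponent becomes $\frac{\kappa}{p}+\epsilon$, leaving a factor $t^{-\epsilon}$ rather than a power of $T$. If moreover $\frac{\kappa}{p}=\frac{\nu}{r}$, then $\gamma+\theta$ exceeds the trace smoothness $1+\gamma-\frac1r=1-\frac1p$, so $u(t)$ need not lie in $X_{\gamma+\theta,r}$ at all.

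The missing idea is that the two embeddings from your first paragraph already provide both kinds of control, and you only need to interpolate them pointwise in $t$. Set $a\ceqq 1+\gamma-\frac{1+\nu}{r}$ and $b\ceqq 1+\gamma-\frac1r$. The unweighted bound in $C([0,T];X_{a,r})$ gives the low-frequency control, and the weighted bound in $C_{\nicefrac{\nu}{r}}((0,T];X_{b,r})$ gives the high-frequency control. Take $\theta\ceqq\frac{\kappa/p}{\nu/r}\in[0,1]$, with the convention $0/0=0$. Reiteration gives $(X_{a,r},X_{b,r})_{\theta,r}=X_{a+\kappa/p,r}\hookrightarrow X_{1-\frac1p,r}$, since $a+\frac{\kappa}{p}\ge 1-\frac1p$ is exactly the last admissibility condition. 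Hence
\[
t^{\kappa/p}\|u(t)\|_{X_{1-\frac1p,r}}\lesssim \|u(t)\|_{X_{a,r}}^{1-\theta}\big(t^{\nu/r}\|u(t)\|_{X_{b,r}}\big)^{\theta}\lesssim\|u\|_{Z_r},\qquad t\in(0,T].
\]
This is the paper's argument. The endpoint cases $\theta\in\{0,1\}$ reduce to one of the two embeddings, since $a\ge 1-\frac1p$ when $\kappa=0$ and $b\ge 1-\frac1p$ always. Your continuity argument on $(0,T]$ is fine.
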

\begin{proof}
The last embedding is immediate, since $r\leq p$,
so $X_{\lambda,r}\hookrightarrow X_{\lambda,p}$ for any $\lambda\in (0,1)$.  It remains to
prove the first embedding.  

From \eqref{eq:Xgamma squeeze} and  \cite[Cor.\ L.4.6]{HNVWvolume3} we see that 
\begin{align*}
Z_r&\into L^r(0,T,w_{\nu};X_{1+\gamma,\infty})\cap W^{1,r}(0,T,w_{\nu};X_{\gamma,\infty})\\
&\into  C([0,T];(X_{\gamma,\infty},X_{1+\gamma,\infty} )_{1-\frac{1+\nu}{r},r}).
\end{align*}
This gives the first part of the first embedding, since by reiteration \cite[Th.\ L.3.1]{HNVWvolume3}:
\begin{align*}
(X_{\gamma,\infty},X_{1+\gamma,\infty} )_{1-\frac{1+\nu}{r},r} = X_{1+\gamma-\frac{1+\nu}{r},r} \into  X_{1-\frac{1+\kappa}{p},r}, 
\end{align*}
where we used $(p,\kappa)$-admissibility.  

For the second part of the first embedding, as before, we see that 
\begin{align*}
Z_r \hookrightarrow C_{\nicefrac{\nu}{r}}((0,T];(X_{\gamma,\infty},X_{1+\gamma,\infty} )_{1-\frac{1}{r},r}) = C_{\nicefrac{\nu}{r}}((0,T];X_{1+\gamma-\frac{1}{r},r}).
\end{align*}
This is not the correct spatial smoothness and weight yet. However, we can set  $\theta = \frac{\kappa/p}{\nu/r}$ (convention $0/0 = 0$), which is in $[0,1]$ by $(p,\kappa)$-admissibility, and by interpolation estimates (or reiteration) we find that for all $t\in (0,T]$: 
\begin{align*}
t^{\kappa/p} \|u(t)\|_{X_{1-\frac{1}{p},r}} & \lesssim  \|u(t)\|_{X_{1+\gamma-\frac{1+\nu}{r},r}}^{1-\theta} (t^{\nu/r}\|u(t)\|_{X_{1+\gamma-\frac{1}{r},r}})^{\theta}
\\ & \lesssim \|u\|_{Z_r}^{1-\theta} \|u\|_{Z_r}^{\theta} = \|u\|_{Z_r},
\end{align*}
where we used $(p,\kappa)$-admissibility again, ensuring that $1-\frac1p\leq s\ceqq 1+\gamma-\frac{1+\nu}{r}+\frac{\kappa}{p}$, so $ X_{s,r}=(X_{1+\gamma-\frac{1+\nu}{r},r},X_{1+\gamma-\frac{1}{r},r})_{\theta,r}\into X_{1-\frac1p,r}$ for the case $\theta\in(0,1)$.
\end{proof}

By adding the $L^p\cap W^{1,p}$-part of $\MR_+(0,T)$ to the results above, we obtain the following embedding of the mixed-scale maximal regularity space.

\begin{corollary} \label{cor: emb} 
Let $p\in (1,\infty)$,  $\kappa\in[0,p-1)$ and let $(r_i, \nu_i, \gamma_i)$ be $(p,\kappa)$-admissible for $1\leq i \leq k$. Let $\MR_+(0,T)$ be defined by \eqref{eq: def MR space}. 
Then 
\begin{align*}
  \MR_+(0,T)&\into L^{p}(0,T,\wk; X_1)\cap C([0,T];\Xtr)\cap C_{\nicefrac{\kappa}{p}}((0,T];\Xtrzero). 
  \end{align*}
The operator norm of the embedding depends on $p,\kappa,r_i,\nu_i,\gamma_i$ for $1\leq i \leq k$, and $T$, and blows up if $T\downarrow 0$, unless we assume that the functions vanish at $t=0$.
\end{corollary}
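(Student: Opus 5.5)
The plan is to reduce everything to the previously established embeddings for each summand, using the fact that the norm on $\MR_+(0,T)$ is the sum norm \eqref{eq:def sum norm}. Let $u\in \MR_+(0,T)$ and fix an arbitrary decomposition $u = u_0 + \sum_{i=1}^k u_i$ in the ambient space $L^1(0,T;X_0)$. Here $u_0\in \MR^p(0,T,\wk)$ and $u_i\in Z_{r_i}\ceqq L^{r_i}(0,T,\wni;X_{1+\gamma_i})\cap W^{1,r_i}(0,T,\wni;X_{\gamma_i})$. Write $E$ for the target space $L^{p}(0,T,\wk; X_1)\cap C([0,T];\Xtr)\cap C_{\nicefrac{\kappa}{p}}((0,T];\Xtrzero)$, with the maximum of the three norms. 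I will show that each summand belongs to $E$ with norm bounded by a constant times its own norm. Then $u\in E$ as well, since all these spaces embed into $L^1(0,T;X_0)$ and the decomposition holds there, so the sum in $E$ coincides with $u$ a.e. (For the continuous parts, the version of $u$ is the sum of the continuous versions.) Taking the infimum over decompositions then gives $\|u\|_E\le C\|u\|_{\MR_+(0,T)}$ with $C$ the maximum of the individual constants.

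For the $X_0$-part $u_0$, the $L^p(0,T,\wk;X_1)$-bound is trivial from the definition of $\MR^p(0,T,\wk)$. The two continuity statements are exactly the trace embeddings \eqref{eq:traceembedding0T} and \eqref{eq:traceembeddingaT}. For each $u_i$, Proposition \ref{prop: emb Lr} gives $u_i\in L^p(0,T,\wk;X_1)$, and Lemma \ref{lem:trace} gives $u_i\in C([0,T];\Xtr)\cap C_{\nicefrac{\kappa}{p}}((0,T];\Xtrzero)$. In both cases the constants depend only on $p,\kappa,r_i,\nu_i,\gamma_i$ and $T$.

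It remains to check the statement about dependence on $T$. The constants in \eqref{eq:traceembedding0T} and \eqref{eq:traceembeddingaT} are $T$-independent for functions vanishing at $0$. The same holds for Proposition \ref{prop: emb Lr}, by its stated remark. For Lemma \ref{lem:trace} I would check that its proof only uses \cite[Cor.\ L.4.6]{HNVWvolume3} together with pointwise interpolation inequalities, so it inherits the same $T$-uniformity.

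The one step needing care, and the main obstacle, is the vanishing-at-zero claim for sums. If $u(0)=0$, the individual summands need not vanish at $0$. I would handle this in one of two ways. The first option is to state the uniform bound for the restricted sum space in which each summand vanishes at $t=0$. The second option is to correct the decomposition: subtract from $u_0$ and add to the $u_i$ suitable extensions of the traces $u_i(0)$ (each of which lies in $X_{1-\frac{1+\kappa}{p},r_i}\hookrightarrow\Xtr$), using the trace method. I would go with the first interpretation, since that is how the remark is used later. Everything else is routine bookkeeping.
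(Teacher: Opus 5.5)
Your proposal is correct and follows essentially the same route as the paper: decompose $u$ according to the sum structure of $\MR_+(0,T)$, apply Proposition \ref{prop: emb Lr} and Lemma \ref{lem:trace} to each $L^{r_i}\cap W^{1,r_i}$-component and the trace embeddings \eqref{eq:traceembedding0T}--\eqref{eq:traceembeddingaT} to the $L^p\cap W^{1,p}$-component, and then take the infimum over decompositions. Your treatment of the $T$-uniformity is more careful than the paper's, which does not address the fact that the summands of a function vanishing at $t=0$ need not vanish themselves; your restricted reading (every summand vanishes at $0$) is exactly how the corollary is used later, e.g.\ in Lemma \ref{lem:Sf Lr MR transference}(ii) and Proposition \ref{prop: linearized well-posed}.
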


\begin{proof}
Thanks to Proposition \ref{prop: emb Lr} and Lemma \ref{lem:trace}, for each $i\in\{1,\ldots,k\}$, we have a continuous embedding of  
$L^{r_i}(0,T,w_{\nu_i};X_{1+\gamma_i})\cap W^{1,r_i}(0,T,w_{\nu_i};X_{\gamma_i})$ into the intersection of spaces stated in the corollary. 
Recalling \eqref{eq: def MR space}, it remains to show that the same is true for    $L^p(0,T,\wk; X_1)\cap W^{1,p}(0,T,\wk;X_0)$. 
Now, the latter follows  directly  from the trace embeddings in \cite[Cor.\ L.4.6]{HNVWvolume3}. 
\end{proof}

\subsection{Main result}\label{sub:lin results}

Recall that {\em strong solutions}  were defined   in Subsection \ref{sub: prelims max Lp reg}.  
The aim of this section is to prove global well-posedness and regularity estimates for   strong solutions to \eqref{eq:parabolic_problem_intro}. These properties are all contained in our main result, Theorem \ref{th: well-posed linear} below. 
We now present the assumptions on the coefficients $A$ and $B$. 

\begin{assumption}\label{ass:linear A transference}
$\gamma_*>0$, $T\in (0,\infty)$, $p\in(1, \infty)$, $\kappa\in[0,p-1)$, and the following holds:
\begin{enumerate}[\rm (1)]
  \item\label{it:A1} $X_0$ and $X_{1+\gamma_*}$ are Banach spaces such that $X_{1+\gamma_*}\hookrightarrow X_0$ densely and continuously.  
For $\gamma\in(0,{1+\gamma_*})$, $X_\gamma$ is a Banach space such that
\[
X_{\gamma,1}\into X_\gamma \into X_{\gamma,\infty},
\]
where $X_{\gamma,q}\ceqq (X_0,X_{1+\gamma_*})_{\frac{\gamma}{{1+\gamma_*}},q}$ for $q\in[1,\infty]$. 
  \item\label{it:A2} $A\col[0,T]\to \calL(X_1, X_0)$ is strongly measurable in the strong operator topology and $A$ has maximal $L^p(0,T,\wk;X_0)$-regularity (see Definition \ref{def:MR}). 
  \item\label{it:A3} For every $(p,\kappa)$-admissible triple $(r, \nu, \gamma)$ (see Definition \ref{def:admissible}), there exists a map  $A_{r,\nu,\gamma}\col [0,T]\to \calL(X_{1},X_{0})$ that is strongly measurable in the strong operator topology, and $A_{r,\nu,\gamma}|_{X_{\gamma+1}}\col [0,T]\to \calL(X_{\gamma+1},X_{\gamma})$ is well-defined and has maximal $L^r(0,T,w_{\nu};X_\gamma)$-regularity. 
\end{enumerate}
Moreover,   the following mapping property holds for $A$ and $A_{r,\nu,\gamma}$,  for every $(p,\kappa)$-admissible triple $(r, \nu, \gamma)$:  there exist constants $C_A$ and $C_{A_{r,\nu,\gamma}}$ such that for all $u\in L^p(0,T,\wk;X_1)\cap C([0,T];\Xtr)\cap C_{\nicefrac{\kappa}{p}}((0,T];\Xtrzero)$, it holds that 
\begin{equation}\label{eq:A mapping sum}
\begin{cases}
      \|Au\|_{L^p(0,T,\wk;X_0)} \leq C_A\Big(\|u\|_{L^p(0,T,\wk;X_1)}+\|u\|_{C([0,T];\Xtr)}+\|u\|_{C_{\nicefrac{\kappa}{p}}((0,T];\Xtrzero)} \Big) \\
      \|A_{r,\nu,\gamma}u\|_{L^p(0,T,\wk;X_0)} \leq 
      C_{A_{r,\nu,\gamma}}\Big(\|u\|_{L^p(0,T,\wk;X_1)}+\|u\|_{C([0,T];\Xtr)}+\|u\|_{C_{\nicefrac{\kappa}{p}}((0,T];\Xtrzero)} \Big).
\end{cases}
\end{equation}  
\end{assumption}

Since there are no derivatives in \eqref{eq:A mapping sum}, one can show that \eqref{eq:A mapping sum} extends to all subintervals $[s,t]$ with the same constant $C_A$ (keeping the same unshifted weights). Indeed, this can be shown by building an extension operator with norm $\leq 1+\varepsilon$ using compression, reflection and a smooth cut-off. Since $\varepsilon>0$ can be arbitrary small, this gives the required result. 

We note that the mapping property \eqref{eq:A mapping sum} is similar to that in  \eqref{eq:bddcondLpA} used in Section \ref{sec:PS extension}. The  version  \eqref{eq:A mapping sum} will be sufficient to obtain   uniqueness in the current setting. Other variants of this condition are also possible: see Remark \ref{rem:Amappingchoice}.   

\begin{remark}
In Assumption \ref{ass:linear A transference}\textit{(\ref{it:A1})},  $X_\gamma$ can always be chosen as a complex interpolation space $[X_0,X_{1+\gamma_*}]_{\frac{\gamma}{1+\gamma_*}}$ or a real interpolation space  $(X_0,X_{1+\gamma_*})_{\frac{\gamma}{1+\gamma_*},q}$ with $q\in[1,\infty]$. 

Since $X_{1,1}\into X_1\into X_{1,\infty}$, one has 
\[
X_{\theta,q}=(X_0,X_1)_{\theta,q}, \qquad \theta\in(0,1), q\in[1,\infty]
\]
with equivalent norms, by reiteration (see \cite[Th.\ L.3.1]{HNVWvolume3}). 

Finally, we note that the mappings $A_{r,\nu,\gamma}$ in Assumption \ref{ass:linear A transference}\textit{(\ref{it:A3})} may be chosen freely, and need not be related to $A$. For instance $A_{r,\nu,\gamma}$ could be chosen time-independent. One could also choose $A_{r,\nu,\gamma}\ceqq A|_{X_{\gamma+1}}$ (e.g.\ in the setting of Corollary \ref{cor:time-indep}). Note that  $A|_{X_{\gamma+1}}(\cdot)x$ is defined for $x\in X_{1+\gamma}$, since $X_{\gamma+1}\into X_{\gamma+1,\infty}\into X_{1,1}\into X_1$.  
\end{remark}

We   emphasize that for time-independent $A$, Assumption \ref{ass:linear A transference} can be weakened substantially. It suffices to require only maximal $L^p(0,T;X_0)$-regularity of $A$, and allowed choices for $X_\gamma$ (for $\gamma\in(0,1)$) are complex and real  interpolation spaces for the pair $(X_0,D(A))$  and fractional domains $D(A^\gamma)$. 
The details will be given in Corollary \ref{cor:time-indep}.

Next, we turn to $B$. In the assumption below,  we fix a finite family of $(p,\kappa)$-admissible triples  and require a suitable integrability condition. 

\begin{assumption}\label{ass: B linear}
$p\in(1,\infty)$, $\kappa\geq 0$, $k\in\N$ and  $(r_i,\nu_i,\gamma_i)$ is $(p,\kappa)$-admissible for $i=1,\ldots,k$. 
Moreover, $B\col [0,T]\to \calL(X_1,X_0)$ decomposes as $B=\sum_{i=1}^k B_i$, where each $B_i\col [0,T]\to \calL(X_1,X_{\gamma_i})$ is strongly measurable in the strong operator topology  and 
  \[
  \|B_i(\cdot)\|_{\calL(X_1,X_{\gamma_i})}\leq b_i(\cdot),\, \text{ and }\, 
  \begin{cases}b_i\in L^{\frac{pr_i}{p-r_i}}(0,T,w_{\frac{\nu_i p-\kappa r_i}{p-r_i}}), \,  &\text{if }p>r_i,\\
  b_i=0, &\text{if }p=r_i.  
  \end{cases}
  \]  
\end{assumption}

\begin{remark}
In Assumption \ref{ass: B linear}, $B_i=0$ whenever $p=r_i$. However, admissible triples $(r_i,\nu_i,\gamma_i)$ with $p=r_i$ are retained to accommodate additional inhomogeneities $f_i$ later. Also, note that if $p>r_i$, then $(p,\kappa)$-admissibility implies that $\frac{\nu_i p-\kappa r_i}{p-r_i} \geq 0$  for the weight of $b_i$. In applications to large deviations (see Remark \ref{rem:LDP}), the weight in Assumption \ref{ass: B linear} becomes one. In this case, $b_i\in L^{q_i}(0,T)$ with $\frac{1}{q_i} = \frac{1}{r_i}-\frac{1}{p}$. 
\end{remark} 

From now on, whenever Assumption \ref{ass: B linear} holds, we use the following notation for convenience: 
 \begin{equation}\label{eq: def b_T}
b_{\scriptscriptstyle[0,\tau]}\ceqq \sum_{1\leq i\leq k, b_i\neq 0}  \|b_i\|_{ L^{\frac{pr_i}{p-r_i}}(0,\tau,w_{\frac{\nu_i p-\kappa r_i}{p-r_i}})}.
 \end{equation}

Moreover, if  $A$ and $A_{r,\nu,\gamma}$ are as in Assumption \ref{ass:linear A transference}, then we will write their maximal regularity constants as $M_{p,\kappa,A}(0,T)$ and $M_{r,\nu,A_{r,\nu,\gamma}}(0,T)$, in line with Definition \ref{def:MR}. Under Assumption \ref{ass: B linear} (so that admissible triples are fixed), we let 
 \begin{equation}\label{eq: def M_T}
   M_T\ceqq M_{p,\kappa,A}(0,T) \vee \max_{1\leq i\leq k} M_{r_i,\nu_i,A_{r_i,\nu_i,\gamma_i}}(0,T).
 \end{equation}

The following is our main result. 

\begin{theorem}\label{th: well-posed linear}
Let Assumptions \ref{ass:linear A transference} and \ref{ass: B linear} hold. Let  $\x(0,T)$ and $\MR_+(0,T)$ be defined by \eqref{eq:def sum sp f} and \eqref{eq: def MR space}. 
Let $u_0\in (X_0, X_1)_{1-\frac{1+\kappa}{p},p}$  and $f\in \x(0,T)$. 

Then, \eqref{eq:parabolic_problem_intro} has a unique strong solution $u\in\MR_+(0,T)$ and
\begin{equation*}
\begin{split}
\|u\|_{\MR_+(0,T)} +&\|u\|_{L^{p}(0,T,\wk;X_{1})}+\|u\|_{C([0,T];X_{1-\frac{1+\kappa}{p},p})} +\|u\|_{C_{\nicefrac{\kappa}{p}}((0,T];X_{1-\frac1p,p})}\\
 & \leq C \big(\|u_0\|_{X_{1-\frac{1+\kappa}{p},p}} +  \|f\|_{\x(0,T)}\big),  
\end{split}
\end{equation*}
for a constant $C$ independent of $u_0$ and $f$. 
The constant $C$ depends only on  $p$, $\kappa$, and     $r_i,\nu_i,\gamma_i$ for $1\leq i\leq k$, and nondecreasingly on $b_{\scriptscriptstyle[0,T]}$, $M_T$ defined by \eqref{eq: def M_T}, and $C_A$ and  $C_{A_{r_i,\nu_i,\gamma_i}}$  from \eqref{eq:A mapping sum}. 
\end{theorem}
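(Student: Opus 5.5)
We plan to mirror the proof of Theorem \ref{thm:perturbLp}: first solve the unperturbed mixed-scale problem, then obtain the key estimate for $B$, and finally absorb $B$ by a contraction argument on short intervals, glued together as in Steps 1--3 of that proof.

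\emph{Step 1 (unperturbed problem, $B=0$).} Write $f=f_0+\sum_i f_i$ with $f_0\in L^p(0,T,\wk;X_0)$ and $f_i\in L^{r_i}(0,T,\wni;X_{\gamma_i})$. Let $u^{(0)}\in\MR^p(0,T,\wk)$ solve $u'+Au=f_0$ with $u(0)=u_0$; this exists by Lemma \ref{lem:initial}. For each $i$, let $v_i\in L^{r_i}(\wni;X_{1+\gamma_i})\cap W^{1,r_i}(\wni;X_{\gamma_i})$ solve $v_i'+A_iv_i=f_i$ with $v_i(0)=0$, where $A_i\ceqq A_{r_i,\nu_i,\gamma_i}$; this uses Assumption \ref{ass:linear A transference}(\ref{it:A3}). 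Since $A_i$ is not $A$, we correct by solving $w_i'+Aw_i=(A_i-A)v_i$ with $w_i(0)=0$ in $\MR^p(0,T,\wk)$. The right-hand side lies in $L^p(\wk;X_0)$ by \eqref{eq:A mapping sum}, because Proposition \ref{prop: emb Lr} and Lemma \ref{lem:trace} place $v_i$ in $L^p(\wk;X_1)\cap C(\Xtr)\cap C_{\kappa/p}(\Xtrzero)$. Then $u=u^{(0)}+\sum_i(v_i+w_i)\in\MR_+(0,T)$ solves the equation, with the stated bound (Corollary \ref{cor: emb} supplies the extra norms).

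Uniqueness: let $u\in\MR_+$ solve the homogeneous problem. By Corollary \ref{cor: emb}, $u$ lies in the space on which \eqref{eq:A mapping sum} holds, so $Au\in L^p(\wk;X_0)$. Hence $u'=-Au\in L^p(\wk;X_0)$, so $u\in\MR^p(0,T,\wk)$, and maximal regularity of $A$ gives $u=0$. We take the infimum over decompositions of $f$ to obtain the estimate in the $\x$-norm. For functions vanishing at $0$, we want the constants to be uniform over subintervals $(0,\tau)$; we get this from the $\tau$-flexibility in Definition \ref{def:MR} together with the vanishing-at-zero versions of the embeddings.

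\emph{Step 2 (key estimate for $B$).} By H\"older's inequality with $\frac1{r_i}=\frac1p+\frac{p-r_i}{pr_i}$ and the chosen weight exponents,
\[
\|B_iu\|_{L^{r_i}(0,t,\wni;X_{\gamma_i})}\le \|b_i\|_{L^{\frac{pr_i}{p-r_i}}(0,t,w_{\frac{\nu_ip-\kappa r_i}{p-r_i}})}\|u\|_{L^p(0,t,\wk;X_1)} .
\]
Combining this with Corollary \ref{cor: emb} (with constants uniform in $t$ for functions vanishing at $0$) gives $\|Bu\|_{\x(0,t)}\le C_0\, b_{[0,t]}\|u\|_{\MR_+(0,t)}$. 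This is exactly the analogue of \eqref{eq:KeyestB}, and no interpolation is needed.

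\emph{Step 3 (gluing).} Choose $\tau_n$ with $b_{[\tau_{n-1},\tau_n]}$ small; absolute continuity of the integrals makes this possible. We repeat Steps 1--3 of the proof of Theorem \ref{thm:perturbLp} with $\MR^p$ replaced by $\MR_+$ and $L^p$ by $\x$. At each $\tau_n$ we restart from the value $u(\tau_n)\in\Xtrzero$, solving with $A$ via Lemma \ref{lem:initial}(2). The number of steps is bounded in terms of $b_{[0,T]}$ because $\sum_n b_{[\tau_{n-1},\tau_n]}^{q_{\min}}\lesssim$ a power of $b_{[0,T]}$. This yields the monotone dependence of $C$.

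The main obstacle is the restriction and gluing in the sum space $\MR_+$. We must make sure that restricting, extending by zero, and concatenating at $\tau_n$ stay within $\MR_+$ with controlled norms. Since the decomposition into summands is not canonical, one cannot just glue the pieces separately. We plan to handle this by always solving for zero-initial-value pieces on $(0,\tau_{n+1})$ with the forcing multiplied by $\one_{[\tau_n,\tau_{n+1}]}$, exactly as with $\Phi_n$ earlier. Then every constructed piece is a genuine element of $\MR_+(0,\tau_{n+1})$, and only $L^1$-ambient sums are formed.
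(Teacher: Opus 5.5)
Your plan has a genuine gap. It sits exactly at the obstacle you identify, and the remedy you propose does not close it.

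The norm of $\MR_+$ is an infimum over decompositions on the whole interval, so it is not local in time. Decompositions on $[0,\tau_n]$ and on $[\tau_n,\tau_{n+1}]$ only have matching sums at $\tau_n$, not matching summands. Hence replacing $\MR^p$ by $\MR_+$ in the proof of Theorem~\ref{thm:perturbLp} leaves three steps unjustified:
\begin{itemize}
\item the recursive estimate \eqref{eq: recursive estimate}, which would need $\|u\|_{\MR_+(0,\tau_{n+1})}\le\|u\|_{\MR_+(0,\tau_n)}+\|u\|_{\MR_+(\tau_n,\tau_{n+1})}$;
\item the membership in $\MR_+(0,\tau_{n+1})$ of the concatenation $u\one_{[0,\tau_n]}+\tilde z\one_{[\tau_n,\tau_{n+1}]}$, which you need both for the extended solution and for the auxiliary function that controls $\tilde z$ and $B\tilde z$;
\item the zero extension $(u_i-\tilde z)\one_{[\tau_n,\tau_{n+1}]}$ in the uniqueness step.
\end{itemize}
Posing the zero-initial-value problems on $(0,\tau_{n+1})$ only takes care of $\Phi_n$. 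The homogeneous continuation $\tilde z$ from Lemma~\ref{lem:initial}(2) lives on $[\tau_n,\tau_{n+1}]$ and still has to be glued to $u$.

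A subtler, related issue affects your Step~2. The $t$-uniform constants in Corollary~\ref{cor: emb} come from applying zero-trace embeddings summand by summand. They are therefore justified when every summand vanishes at $0$, not merely when $u(0)=0$ for $u\in\MR_+(0,t)$. So the $\tau$-independence of your contraction constant in the $\MR_+$-norm is not established. This matters because $\tau$, and with it the number of steps $N$, is chosen in terms of that constant.

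The fix, which is the paper's route, is to keep the $\MR_+$-norm out of the iteration altogether.
\begin{itemize}
\item As you noticed, the bound for $B$ is plain H\"older and only involves $\|v\|_{L^p(0,\tau,\wk;X_1)}$ (Lemma~\ref{lem:SBLp gamma}). So run the contractions $\Phi,\Phi_n$ on $L^p(0,\tau,\wk;X_1)$. There Proposition~\ref{prop: linearized well-posed} gives $\tau$-uniform bounds for the linear solution operator, since that solution is built from pieces vanishing at $0$ together with Lemma~\ref{lem:initial mixed scale}.
\item Do all recursive bookkeeping in the additive $L^p(\wk;X_1)$-norm.
\item For gluing, observe that your Step~1 uniqueness argument in fact works among strong solutions in the local space $\MR_{\rm tr}(0,\tau)$ of \eqref{eq:MRtr def}. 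Concatenations that match at $\tau_n$, and zero extensions of functions vanishing at $\tau_n$, stay in $\MR_{\rm tr}$. So as soon as they solve a linear problem with right-hand side in $\x$, they coincide with the $\MR_+$-solution (Remark~\ref{rem:reg}).
\item Recover the $\MR_+$- and trace bounds at the very end by applying Proposition~\ref{prop: linearized well-posed} with $v=u$.
\end{itemize}
Also, for the initial-value part, use Lemma~\ref{lem:initial mixed scale} instead of Lemma~\ref{lem:initial}, since only \eqref{eq:A mapping sum} is assumed, not \eqref{eq:bddcondLpA}.
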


This result is of interest even in the unperturbed case ($B=0$). Indeed, we extend the space of inhomogeneities to $\x(0,T)\supset L^p(0,T,\wk;X_0)$, while preserving the regularity $u\in L^p(0,T,\wk;X_1)$.

 \begin{remark}\label{rem: wlog non decr in T}
 It readily follows from Theorem \ref{th: well-posed linear} that the claims therein remain true on subintervals $[0,\tau]$ with $\tau\in(0,T]$, with the same constant $C$ (i.e.\ for the case of the full interval $[0,T]$). 
 
 Existence and uniqueness of solutions on $[0,\tau]$ hold since  Assumptions \ref{ass:linear A transference} and \ref{ass: B linear} are also satisfied on $[0,\tau]$, due to Lemma \ref{lem:restriction} and since \eqref{eq:A mapping sum} holds on $[0,\tau]$ (with new constants) by applying a bounded  extension operator to $u$. 
 
 Concerning the final constant $C$, we can apply   
Theorem \ref{th: well-posed linear} to $f\one_{[0,\tau]}\in \x(0,T)$, restrict the corresponding solution (on $[0,T]$) to $[0,\tau]$,  and use uniqueness on $[0,\tau]$.  
 \end{remark} 
 
 When $A$ is a (time-independent) linear operator, the main theorem simplifies significantly, rendering several assumptions redundant.
 \begin{corollary}[Autonomous case] \label{cor:time-indep}
   Let $A$ be a densely defined linear operator on $X_0$, let $X_1\ceqq D(A)$ and suppose that $A$ has maximal $L^p(0,T;X_0)$-regularity.    Suppose that we have one of the following definitions for $X_\gamma$:
   \begin{align} 
     &X_\gamma\ceqq\begin{cases}
       [X_0,X_1]_{\gamma},\qquad & \gamma\in (0,1),  \\
       [X_1,D(A^2)]_{\gamma-1}, &\gamma\in (1,2), 
     \end{cases}\label{eq:complex}\\[.3em]
     &X_\gamma\ceqq\begin{cases}
       (X_0,X_1)_{\gamma,q},\qquad & \gamma\in (0,1),  \\
       (X_0,D(A^2))_{\gamma/2,q}, &\gamma\in (1,2), 
     \end{cases}\label{eq:real}\\[.5em]
     &X_\gamma\ceqq 
       D((\mu+A)^\gamma), \qquad\, \gamma\in(0,1)\cup(1,2), 
       \label{eq:fractional}
   \end{align}
   where $q\in[1,\infty]$ in \eqref{eq:real} and $\mu\in \R$ is fixed and large enough.  Let Assumption \ref{ass: B linear} hold. 

Then the conclusions of Theorem \ref{th: well-posed linear} are valid.
 \end{corollary}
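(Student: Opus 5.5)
The plan is to verify Assumption \ref{ass:linear A transference} for the time-independent operator $A$ with $\kappa=0$, and for each admissible triple choose $A_{r,\nu,\gamma}\ceqq A|_{X_{1+\gamma}}$ (shifted by $\mu$ where convenient). Then Theorem \ref{th: well-posed linear} applies directly. Three things must be checked: the scale condition (1), the maximal regularity condition (3) in the scaled spaces with weights, and the mapping bounds \eqref{eq:A mapping sum}.

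For (1), I take $\gamma_*\in(0,1)$ small with all $\gamma_i\le\gamma_*$ and set $X_{1+\gamma_*}$ to be the space from \eqref{eq:complex}, \eqref{eq:real} or \eqref{eq:fractional}. Since $A$ has maximal $L^p$-regularity, $\mu+A$ is sectorial and invertible for large $\mu$. Hence $D(A^2)$ and $D(A)$ fit in the scale, and reiteration (\cite[Th.\ L.3.1]{HNVWvolume3}, together with the usual sandwich $X_{\theta,1}\into [\cdot,\cdot]_\theta,\, D((\mu+A)^\theta)\into X_{\theta,\infty}$) gives $X_{\gamma,1}\into X_\gamma\into X_{\gamma,\infty}$, with $X_{\gamma,q}$ computed relative to $(X_0,X_{1+\gamma_*})$. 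The only nonstandard point is that $X_{1+\gamma_*}$ lies above $X_1$. There I use that $(\mu+A)^{-1}$ is an isomorphism $X_\theta\to X_{1+\theta}$ in each of the three scales. This reduces the statements for $\gamma\in(1,2)$ to those for $\gamma-1\in(0,1)$ and yields the reiteration identities.

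For (3), I use the fact that maximal $L^p(0,T;X_0)$-regularity for a time-independent $A$ is independent of $p\in(1,\infty)$ and extends to power weights $w_\nu$ with $\nu\in(-1,r-1)$, i.e.\ $A_p$-weights (\cite[\S17.2]{HNVWvolume3}, Pr\"uss--Simonett weighted results). It transfers to the part of $A$ in $X_\gamma$ for each of these scales because $A$ commutes with its resolvent and these interpolation functors (\cite[\S17.3]{HNVWvolume3}). Admissibility gives $\nu+1<r$, so each weight is admissible. Hence $A|_{X_{1+\gamma}}$ has maximal $L^r(0,T,w_\nu;X_\gamma)$-regularity. The definition additionally demands the estimate for arbitrary $\tau$ and uniqueness; in the autonomous case both follow from the variation-of-constants formula.

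For \eqref{eq:A mapping sum}, both bounds are trivial: $\|Au\|_{L^p(\wk;X_0)}\le\|A\|_{\calL(X_1,X_0)}\|u\|_{L^p(\wk;X_1)}$. I expect the main obstacle to be the weighted, scaled maximal regularity in step (3), in particular the passage from weights and exponents to other values for the restricted operators in the fractional-domain and complex scales. I would handle this through the sectorial calculus, since an isomorphism $(\mu+A)^{\theta}$ conjugates the problem on $X_\theta$ to that on $X_0$. The bookkeeping showing that the chosen $\gamma_*$ yields the same spaces $X_{\gamma_i}$ and $X_{1+\gamma_i}$ as prescribed is routine given reiteration.
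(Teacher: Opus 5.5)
Your overall architecture matches the paper's. You take $A_{r,\nu,\gamma}=A|_{X_{1+\gamma}}$, verify the scale condition by reiteration and the standard sandwiches, note that \eqref{eq:A mapping sum} is trivial because $X_1=D(A)$, and prove maximal regularity of the part of $A$ in $X_\gamma$ scale by scale. Your choice $\gamma_*<1$ instead of the paper's $\gamma_*=1$, $X_2\ceqq D(A^2)$, is harmless and only adds a reiteration step.

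The genuine gap is in step (3) for the real scale \eqref{eq:real}. The reason you give, that $A$ ``commutes with its resolvent and these interpolation functors'', yields only one of two things: that the part of $A$ in $X_\gamma$ is sectorial, or, if you interpolate the solution operator $f\mapsto u_f$ between the $X_0$- and $X_1$-levels, a bound from $(L^r(X_0),L^r(X_1))_{\gamma,q}$ into $(L^r(X_1),L^r(X_2))_{\gamma,q}$. For complex interpolation this suffices, because $[L^r(w_\nu;X_0),L^r(w_\nu;X_1)]_\gamma=L^r(w_\nu;[X_0,X_1]_\gamma)$; this is exactly how the paper treats \eqref{eq:complex}. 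For real interpolation it does not. Minkowski's inequality gives $L^r((X_0,X_1)_{\gamma,q})\into(L^r(X_0),L^r(X_1))_{\gamma,q}$ only for $q\geq r$, and $(L^r(X_1),L^r(X_2))_{\gamma,q}\into L^r((X_1,X_2)_{\gamma,q})$ only for $q\leq r$, so the argument closes only when $q=r$. For $q\in(1,\infty)$ you could repair this: interpolate at exponent $q$, then apply $p$-independence and weighted extrapolation (\cite[Th.\ 17.2.31, Prop.\ 17.2.36]{HNVWvolume3}) to the part of $A$ in $(X_0,X_1)_{\gamma,q}$. For $q\in\{1,\infty\}$, however, this route would need maximal $L^1$- or $L^\infty$-regularity on $X_0$, which is not available. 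The missing ingredient is the Da Prato--Grisvard theorem \cite[Cor.\ 17.3.20]{HNVWvolume3}, which the paper uses. It gives maximal $L^r$-regularity of the part of $A$ in $(X_0,X_1)_{\gamma,q}$ for every $q\in[1,\infty]$ from analyticity of the semigroup alone.

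Your fallback of conjugating by $(\mu+A)^{\theta}$ does not repair this either. It is an isomorphism of $X_\theta$ onto $X_0$ commuting with $A$ only for the fractional-domain scale \eqref{eq:fractional}, and there the paper uses exactly this argument. For \eqref{eq:complex} and \eqref{eq:real}, one has $(\mu+A)^\theta X_\theta=X_0$ only if $X_\theta=D((\mu+A)^\theta)$. That is a BIP-type identity which maximal regularity does not imply, so this route fails for the complex scale as well; there you must interpolate the solution operator as above.

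Finally, you announce that you verify Assumption \ref{ass:linear A transference} with $\kappa=0$. The corollary, however, covers every $\kappa\in[0,p-1)$ allowed by Assumption \ref{ass: B linear}. Assumption \ref{ass:linear A transference}\textit{(\ref{it:A2})} therefore requires maximal $L^p(0,T,w_\kappa;X_0)$-regularity of $A$. This follows from \cite[Prop.\ 17.2.36]{HNVWvolume3}, the same weighted extrapolation you invoke for $w_\nu$, but it must be included.
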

 Note that maximal $L^p$-regularity implies sectoriality of a shifted version of $A$ (see \cite[Th.\ 17.2.15]{HNVWvolume3}). In particular, $A$ is closed and generates an analytic $C_0$-semigroup. Moreover, we can find $\mu\in \R$ such that $\mu+A$ is sectorial and invertible.  
 
\begin{proof} 
By applying the invertible transformation $u\mapsto e^{-\lambda \cdot} u$ on $\MR_+(0,T)$, we can equivalently consider the problem with $A$ replaced by $\lambda+A$. Therefore, in the sequel, without loss of generality, we can assume $A$ is sectorial and invertible and $\mu=0$ in \eqref{eq:fractional}. Since $A$ is densely defined and sectorial, it holds that $X_2\into X_0$ densely (see \cite[Prop.\ 2.1.1(i), Prop.\ 2.1.4(i)]{Lun}).

Put $\gamma_*\ceqq 1$,  $X_2\ceqq D(A^2)$ and write $X_{\gamma,q}\ceqq (X_0,D(A^2))_{\gamma/2,q}$ for $\gamma\in(0,2)$ and $q\in[1,\infty]$. Define $A_{r,\nu,\gamma}\ceqq A|_{X_{\gamma+1}}$ whenever  $(r,\nu,\gamma)$ is a $(p,\kappa)$-admissible triple.  
 
To prove the corollary, it remains to prove that Assumption \ref{ass:linear A transference} is satisfied for all three choices of $X_\gamma$, and with $A_{r,\nu,\gamma}$ defined above.  
Note that  \eqref{eq:A mapping sum} holds since $X_1=D(A)$, so  $\|Au\|_{L^p(0,T,\wk;X_0)}\lesssim_{A,X_1,X_0} \|u\|_{L^p(0,T,\wk;X_1)}$ for any $u\in L^p(0,T,\wk;X_1)$. 
Moreover, we can validate Assumption \ref{ass:linear A transference}\textit{(\ref{it:A2})} at once: maximal $L^p(0,T;X_0)$-regularity implies maximal $L^p(0,T,\wk;X_0)$-regularity by \cite[Prop.\ 17.2.36]{HNVWvolume3} (note that $\kappa/p\in [0,1-1/p)$ by Assumption \ref{ass: B linear}). 
It remains to verify Assumption \ref{ass:linear A transference}\textit{(\ref{it:A1})}\textit{(\ref{it:A3})}. We   do this case by case below. One more general observation will be useful:  $(X_0, D(A^2))_{\frac12, 1} \hookrightarrow D(A)\hookrightarrow (X_0, D(A^2))_{\frac12, \infty}$ by \cite[\S1.15.2 Th.(d)]{Tri78}. Therefore, using reiteration,  we obtain 
\begin{align}\label{eq:D(A)}
  (X_0,D(A))_{\gamma,q}& =(X_0,D(A^2))_{\gamma/2,q}\eqqc X_{\gamma,q}, \qquad \gamma\in(0,1),\, q\in[1,\infty],
\\ \label{eq:D(A^2)}
  (D(A),D(A^2))_{\gamma-1,q}&=(X_0,D(A^2))_{\gamma/2,q}\eqqc X_{\gamma,q}, \qquad \gamma\in(1,2),\, q\in[1,\infty].
\end{align}
 
\textbf{Case of \eqref{eq:complex}: }   
We have by \eqref{eq:D(A)}, \eqref{eq:D(A^2)} and \cite[\S1.10.3 Th.\ 1]{Tri78}:  
\[
\begin{cases}
X_{\gamma,1}\into [X_0,D(A)]_\gamma \into X_{\gamma,\infty}, \qquad &\gamma\in(0,1),\\
X_{\gamma,1}\into  [D(A),D(A^2)]_{\gamma-1} \into  X_{\gamma,\infty}, & \gamma\in(1,2),
\end{cases}
\]
thus $X_\gamma$ defined by \eqref{eq:complex} satisfies Assumption \ref{ass:linear A transference}\textit{(\ref{it:A1})}.

To verify Assumption \ref{ass:linear A transference}\textit{(\ref{it:A3})}, it suffices to prove maximal $L^p(0,T;X_\gamma)$-regularity, since maximal $L^r(0,T,\wn;X_\gamma)$-regularity then follows by applying \cite[Th.\ 17.2.31, Prop.\ 17.2.36]{HNVWvolume3} (note that  $\nu/r\in [0,1-1/r)$ by Definition \ref{def:admissible}). 

We prove maximal $L^p(0,T;X_\gamma)$-regularity. 
Maximal $L^p(0,T;X_0)$-regularity implies maximal $L^p(0,T;X_1)$-regularity, using that  
$A\col X_{i+1}\to X_i$ is an isomorphism for $i=0,1$ and applying $A$ to the problem \eqref{eq:parabolic_problem_prel} with $u_0=0$. 

Hence, for $i=0,1$, the map $\Phi_i\col L^p(0,T;X_i)\to L^p(0,T;X_{i+1}) \col  f\mapsto u_f$ is well-defined, linear and bounded, where $u_f\in L^p(0,T;X_1)\cap W^{1,p}(0,T;X_0)$ is the unique strong solution to $u' + Au= f$, $u(0)=0$. 
Exactness of complex interpolation gives that $\Phi\col f\mapsto u_f\col L^p(0,T;[X_0,X_1]_\gamma)\to L^p(0,T;[X_1,X_2]_\gamma)$ is bounded for $\gamma\in(0,1)$.  

Now, a   consequence  is that  $f\mapsto u_f\col L^p(0,T;[X_0,X_1]_\gamma)\to W^{1,p}(0,T;[X_0,X_1]_\gamma)$ is also bounded,   since $u_f'=-Au_f+f$ a.e.\ (see \cite[Lem.\ 2.5.8]{HNVWvolume1}), combined with boundedness of $\Phi$ and the fact that 
\begin{equation}\label{eq:iso interpol}
  A\col [X_1,X_2]_\gamma\to [X_0,X_1]_\gamma \text{ is an isomorphism, }
\end{equation}
which follows from exactness of complex interpolation applied to $A$ and $A^{-1}$. 
Also, \eqref{eq:iso interpol}   yields $D(A|_{X_\gamma})=X_{1+\gamma}$. 
This proves the needed existence and maximal regularity estimate. Uniqueness follows from maximal $L^p(0,T;X_0)$-regularity since $X_{i+\gamma}\subset X_i$ for $i=0,1$,  $\gamma\in(0,1)$.  

\textbf{Case of \eqref{eq:real}: } 
Recalling \eqref{eq:real} and using \eqref{eq:D(A)}, we have $X_\gamma=X_{\gamma,q}$ for all $\gamma\in(0,2)$ and the embeddings of Assumption \ref{ass:linear A transference}\textit{(\ref{it:A1})} hold. 

Regarding Assumption \ref{ass:linear A transference}\textit{(\ref{it:A3})}, $A|_{X_{\gamma}}$ has maximal $L^r(0,T;X_{\gamma})$-regularity 
by \cite[Cor.\ 17.3.20]{HNVWvolume3} (the proof therein treats $q\in[1,\infty]$). Now  \cite[Prop.\ 17.2.36]{HNVWvolume3} yields  maximal $L^r(0,T,\wn;X_{\gamma})$-regularity of $A|_{X_{\gamma}}$. 
Here we use that $D(A|_{X_{\gamma}})=X_{1+\gamma,q}\eqqc X_{1+\gamma}$, which follows from \cite[\S1.15.2 Th.(e)]{Tri78} (or by using   exactness as in \eqref{eq:iso interpol}).  
   
\textbf{Case of \eqref{eq:fractional}: } 
For $\gamma\in(0,2)$, we have $X_{\gamma,1}\into D(A^\gamma)\into X_{\gamma,\infty}$  by \cite[\S1.15.2 Th.(d)]{Tri78}, and  $A^\gamma\col X_\gamma \to X_0$ is a Banach space isomorphism by \cite[\S1.15.2 Th.(e)]{Tri78}. 
 
Now, maximal $L^p(0,T;X_0)$-regularity implies maximal $L^r(0,T;X_0)$-regularity \cite[Th.\ 17.2.26(4)]{HNVWvolume3}, which implies maximal $L^r(0,T,\wn;X_0)$-regularity \cite[Prop.\ 17.2.36]{HNVWvolume3}. 
The latter implies maximal $L^r(0,T,\wn;X_\gamma)$-regularity, as can be seen from transforming the problem \eqref{eq:parabolic_problem_prel} using the isomorphism $A^\gamma$, and noting that $D(A|_{D(A^\gamma)})=D(A^{1+\gamma})$ \cite[\S1.15.2 Th.(e)]{Tri78}. 
 \end{proof}

\subsection{Proof of Theorem \ref{th: well-posed linear}}

We return to  the main theorem, Theorem \ref{th: well-posed linear}, in which $A$ is time-dependent.  Before proceeding to the proof, we need several preparations. 
The next  lemma establishes  estimates that  Assumption \ref{ass:linear A transference} gives for  
\begin{equation}\label{eq: S*f}
\left\{
\begin{aligned}
    u' + A u &= f, \\
    u(0) &= 0.
\end{aligned}
\right.
\end{equation} 
We use the notation of the maximal regularity constants from Definition \ref{def:MR}.  Note that uniqueness for  part (ii) of the lemma below will be proved more generally in Proposition \ref{prop: linearized well-posed}.  

\begin{lemma}\label{lem:Sf Lr MR transference} 
Let Assumption \ref{ass:linear A transference} hold and let $(r,\nu,\gamma)$ be $(p,\kappa)$-admissible.  
Then, for all $\tau\in(0,T]$, we have:
\begin{itemize}
\item[(i)] For every $f\in L^p(0,\tau,\wk;X_0)$, \eqref{eq: S*f} has a unique strong solution $u\in L^p(0,\tau,\wk;X_1)\cap W^{1,p}(0,\tau,\wk;X_0)$ on $[0,\tau]$, and  
\begin{equation*}
\|u\|_{L^p(0,\tau,\wk;X_1)}+\|u\|_{W^{1,p}(0,\tau,\wk;X_0)}\leq M_{p,\kappa,A}(0,T) \|f\|_{L^p(0,\tau,\wk ;X_0)}. 
\end{equation*} 
  \item[(ii)] For every $f\in L^r(0,\tau,\wn ;X_{\gamma})$, \eqref{eq: S*f} has a strong solution $u\in F(0,\tau)$ such that 
\begin{equation*}
\|u\|_{F(0,\tau)}\leq M_{r,\nu,A_{r,\nu,\gamma}}(0,T)(1+M_{p,\kappa,A}(0,T)L(C_{A_{r,\nu,\gamma}}+C_A )) \|f\|_{L^{r}(0,\tau,\wn ;X_{\gamma})}, 
\end{equation*}
where $L$ is a constant depending only on $p,\kappa,r,\nu,$ and $\gamma$, and $C_A$ and $C_{A_{r,\nu,\gamma}}$ are the constants from  \eqref{eq:A mapping sum},
and 
\[F(0,\tau) \ceqq \big(L^p(0,\tau,\wk;X_1)\cap W^{1,p}(0,\tau,\wk;X_0)\big)+\big(L^r(0,\tau,\wn;X_{\gamma+1})\cap W^{1,r}(0,\tau,\wn;X_{\gamma})\big).\] 
\end{itemize} 
\end{lemma}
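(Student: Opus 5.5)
Part (i) follows directly from Assumption \ref{ass:linear A transference}\textit{(\ref{it:A2})} together with Lemma \ref{lem:restriction}, applied with $a=0$ and $b=\tau$. That lemma gives existence and uniqueness in $\MR^p(0,\tau,\wk)$ and the estimate with constant $M_{p,\kappa,A}(0,T)$. Nothing further is needed here.

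For part (ii) I use a correction argument. Let $f\in L^r(0,\tau,\wn;X_\gamma)$.

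\emph{Step 1 (the $X_\gamma$-part).} By Assumption \ref{ass:linear A transference}\textit{(\ref{it:A3})} and Lemma \ref{lem:restriction}, applied to $A_{r,\nu,\gamma}$ on $X_\gamma$, there is a strong solution
\[
v\in Z_r(0,\tau)\ceqq L^r(0,\tau,\wn;X_{1+\gamma})\cap W^{1,r}(0,\tau,\wn;X_\gamma)
\]
of $v'+A_{r,\nu,\gamma}v=f$, $v(0)=0$. It satisfies $\|v\|_{Z_r}\le M_{r,\nu,A_{r,\nu,\gamma}}(0,T)\|f\|_{L^r(0,\tau,\wn;X_\gamma)}$.

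\emph{Step 2 (the correction).} Rewrite the equation as $v'+Av=f+(A-A_{r,\nu,\gamma})v$. The aim is to put $g\ceqq (A_{r,\nu,\gamma}-A)v$ into $L^p(0,\tau,\wk;X_0)$. Proposition \ref{prop: emb Lr} and Lemma \ref{lem:trace} give
\[
Z_r\hookrightarrow L^p(0,\tau,\wk;X_1)\cap C([0,\tau];\Xtr)\cap C_{\nicefrac{\kappa}{p}}((0,\tau];\Xtrzero).
\]
Since $v(0)=0$, the embedding constant $L$ can be taken independent of $\tau$. The mapping property \eqref{eq:A mapping sum}, which by the remark after Assumption \ref{ass:linear A transference} holds on subintervals with the same constants, then yields
\[
\|g\|_{L^p(0,\tau,\wk;X_0)}\le (C_A+C_{A_{r,\nu,\gamma}})\,L\,\|v\|_{Z_r}.
\]
Next, part (i) gives $w\in\MR^p(0,\tau,\wk)$ solving $w'+Aw=g$, $w(0)=0$, with $\|w\|\le M_{p,\kappa,A}(0,T)\|g\|$. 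Set $u\ceqq v+w\in F(0,\tau)$. Then $u'+Au=f+(A-A_{r,\nu,\gamma})v+g=f$ a.e., and the integrated form of a strong solution holds because both $Av$ and $A_{r,\nu,\gamma}v$ are in $L^1(0,\tau;X_0)$. This is where the regularity of $v$ in $X_1$ obtained from the embedding is used.

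\emph{Step 3 (the estimate).} The sum norm gives $\|u\|_{F}\le \|v\|_{Z_r}+\|w\|_{\MR^p}$. Inserting the bounds from Steps 1 and 2 gives exactly the stated constant
\[
M_{r,\nu,A_{r,\nu,\gamma}}(0,T)\bigl(1+M_{p,\kappa,A}(0,T)\,L\,(C_{A_{r,\nu,\gamma}}+C_A)\bigr).
\]

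The main obstacle is the $\tau$-uniformity of $L$. Proposition \ref{prop: emb Lr} and Corollary \ref{cor: emb} state that their constants blow up as $T\downarrow0$ unless the functions vanish at $0$. I will therefore restrict to the zero-trace subspace, extending $v$ by a reflection or extension operator that preserves the zero initial value, so that the constant depends only on $p,\kappa,r,\nu,\gamma$. A second point to check is that $A_{r,\nu,\gamma}v$ agrees with the restriction $A_{r,\nu,\gamma}|_{X_{1+\gamma}}v$ used in Step 1, which holds because $v(t)\in X_{1+\gamma}\subset X_1$ a.e.
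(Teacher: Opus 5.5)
Your proposal is correct and follows the paper's proof essentially step for step. You solve $v'+A_{r,\nu,\gamma}v=f$ on the $X_\gamma$-scale, bound $(A_{r,\nu,\gamma}-A)v$ in $L^p(0,\tau,\wk;X_0)$ using the zero-trace (hence $\tau$-uniform) embedding of Corollary~\ref{cor: emb} together with the mapping property \eqref{eq:A mapping sum}, and then correct with the maximal $L^p$-regularity solution $w$. The only difference is cosmetic: the paper extends $v$ to $(0,T)$ by a bounded extension operator $\mathrm{E}$ and applies \eqref{eq:A mapping sum} on the full interval, so its constant is $L=K\|\mathrm{E}\|$, whereas you invoke the subinterval version of \eqref{eq:A mapping sum} directly; both routes are valid.
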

\begin{proof}
(i): By the maximal $L^p(0,T,\wk;X_0)$-regularity from Assumption \ref{ass:linear A transference}\textit{(\ref{it:A2})} and by Lemma \ref{lem:restriction}, we have a unique strong solution in $L^p(0,\tau,\wk;X_1)\cap W^{1,p}(0,\tau,\wk;X_0)$ to \eqref{eq: S*f}. The claimed estimate follows from \eqref{eq:MRest}. 

(ii): By the maximal $L^r(0,T,\wn;X_\gamma)$-regularity in  Assumption \ref{ass:linear A transference}\textit{(\ref{it:A3})}, there exists   a unique strong solution $v\in L^r(0,\tau,\wn;X_{1+\gamma})\cap W^{1,r}(0,\tau,\wn;X_{\gamma})$  to $v'+A_{r,\nu,\gamma}v=f$, $v(0)=0$. Moreover, using a bounded extension operator $\mathrm{E}\col L^r(0,\tau,\wn;X_{\gamma+1})\cap W^{1,r}(0,\tau,\wn;X_{\gamma})\to L^r(0,\tau,\wn;X_{\gamma+1})\cap W^{1,r}(0,T,\wn;X_{\gamma})$ (see \cite[Prop.\ L.4.5]{HNVWvolume3}) and using  \eqref{eq:A mapping sum}, we have
\begin{align}\label{eq:trans est}
\|(A_{r,\nu,\gamma}-A)v\|_{L^p(0,\tau,\wk;X_0)}&\leq \|(A_{r,\nu,\gamma}-A)\mathrm{E}v\|_{L^p(0,T,\wk;X_0)}\notag\\&
\leq (C_{A_{r,\nu,\gamma}}+C_A )K\|\mathrm{E}\|\|v\|_{L^r(0,\tau,\wn;X_{\gamma+1})\cap W^{1,r}(0,\tau,\wn;X_{\gamma})},
\end{align}
where $K$ is the embedding constant of Corollary \ref{cor: emb}, which is $\tau$- and $T$-independent since $v(0)=0$, and $\|\mathrm{E}\|$ depends only on $p,\kappa,r,$ and $\nu$  (see \cite[Prop.\ L.4.5(4)]{HNVWvolume3}).  
Therefore, by Assumption \ref{ass:linear A transference}\textit{(\ref{it:A2})}, we have a unique strong solution $w\in L^p(0,\tau,\wk;X_1)\cap W^{1,p}(0,\tau,\wk;X_0)$ to $w'+Aw=(A_{r,\nu,\gamma}-A)v$, $w(0)=0$. 

Now put $u\ceqq v+w$. Then $u\in F(0,\tau)$ and $u$ is a strong solution to \eqref{eq: S*f}. Moreover, by maximal regularity,  Lemma \ref{lem:restriction}, and \eqref{eq:trans est},  
\begin{align*}
\|u\|_{F(0,\tau)}&\leq \|v\|_{L^r(0,\tau,\wn;X_{\gamma+1})\cap W^{1,r}(0,\tau,\wn;X_{\gamma})}+\|w\|_{L^p(0,\tau,\wk;X_1)\cap W^{1,p}(0,\tau,\wk;X_0)}\\
&\leq \big(1+M_{p,\kappa,A}(0,T)(C_{A_{r,\nu,\gamma}}+C_A )K\|\mathrm{E}\|\big)\|v\|_{L^r(0,\tau,\wn;X_{\gamma+1})\cap W^{1,r}(0,\tau,\wn;X_{\gamma})}\\
&\leq M_{r,\nu,A_{r,\nu,\gamma}}(0,T)\big(1+M_{p,\kappa,A}(0,T)(C_{A_{r,\nu,\gamma}}+C_A )K\|\mathrm{E}\|\big)\|f\|_{L^r(0,\tau,\wn;X_{\gamma})}.
\end{align*} 
\end{proof}

The following lemma is an extension of Lemma \ref{lem:initial}, in which we assume that $A$ has the mapping property \eqref{eq:A mapping sum} instead of \eqref{eq:bddcondLpA}. We consider the homogeneous problem \begin{equation}\label{eq: in value part}
\begin{cases}
{u}'+A{u}=0,\\
{u}(0)=u_0. 
\end{cases}
\end{equation}

\begin{lemma}\label{lem:initial mixed scale}
Let Assumption \ref{assump:XA}  be satisfied.
Let $p\in (1, \infty)$ and $\kappa\in [0,p-1)$.  Suppose that  the condition for $A$ in \eqref{eq:A mapping sum} holds  and $A$ has maximal $L^p(0,T,w_{\kappa};X_0)$-regularity. 

For every $\tau\in(0,T]$ and $u_0\in X_{1-\frac{1+\kappa}{p},p}$, there exists a unique strong solution $u\in L^p(0,\tau,\wk;X_1)\cap W^{1,p}(0,\tau,\wk;X_0)$ to \eqref{eq: in value part}, and  
\begin{align*}
&\|u\|_{L^p(0,\tau,\wk;X_1)\cap W^{1,p}(0,\tau,\wk;X_0)}+\|u\|_{C([0,\tau];\Xtr)} +\|u \|_{C_{\nicefrac{\kappa}{p}}((0,\tau];\Xtrzero)} \leq R_T \|u_0\|_{X_{1-\frac{1+\kappa}{p},p}},  
\end{align*}
where $R_T$ is a constant that depends only on $p,\kappa$, and nondecreasingly on $M_{p,\kappa,A}(0,T)$ and $C_A$. In particular, $R_T$ is independent of $\tau$ and $u_0$.  
\end{lemma}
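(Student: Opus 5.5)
We follow the proof of Lemma \ref{lem:initial}(1), with \eqref{eq:bddcondLpA} replaced by the first line of \eqref{eq:A mapping sum}. Fix $\tau\in(0,T]$ and $u_0\in X_{1-\frac{1+\kappa}{p},p}$.

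By the trace method for real interpolation \cite[Th.\ L.2.3]{HNVWvolume3}, there is $z\in \MR^p(0,\infty,\wk)$ with $z(0)=u_0$ and $\|z\|_{\MR^p(0,\infty,\wk)}\le c_{p,\kappa}\|u_0\|_{X_{1-\frac{1+\kappa}{p},p}}$. The trace embeddings \eqref{eq:traceembedding0T} and \eqref{eq:traceembeddingaT} are applied on the fixed interval $(0,T)$, where their constants depend only on $p,\kappa$ and $T$. They give
\[
\|z\|_{L^p(0,T,\wk;X_1)}+\|z\|_{C([0,T];\Xtr)}+\|z\|_{C_{\nicefrac{\kappa}{p}}((0,T];\Xtrzero)}\lesssim \|z\|_{\MR^p(0,T,\wk)}.
\]
Hence \eqref{eq:A mapping sum} yields $\|Az\|_{L^p(0,T,\wk;X_0)}\lesssim C_A\|u_0\|_{X_{1-\frac{1+\kappa}{p},p}}$, and also $z'\in L^p(0,T,\wk;X_0)$. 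By Lemma \ref{lem:restriction}, let $v\in\prescript{}{0}{\MR}^p(0,\tau,\wk)$ be the strong solution of $v'+Av=-z'-Az$ on $(0,\tau)$ with $v(0)=0$. It satisfies
\[
\|v\|_{\MR^p(0,\tau,\wk)}\le M_{p,\kappa,A}(0,T)\,(1+C\,C_A)\,\|z\|_{\MR^p(0,T,\wk)}.
\]
Set $u\ceqq v+z$ on $[0,\tau]$. Then $u$ is a strong solution of \eqref{eq: in value part}, since $Au\in L^p$ follows from $Au=-u'$ a.e.

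For the norm bound, the $\MR^p$-part is immediate from the two estimates above. For the $C([0,\tau];\Xtr)$ and $C_{\nicefrac{\kappa}{p}}((0,\tau];\Xtrzero)$ norms we treat $z$ and $v$ separately. For $z$ we use the embedding on $(0,T)$, restricted to $(0,\tau)$. For $v$ we use \eqref{eq:traceembedding0T} and \eqref{eq:traceembeddingaT} for functions vanishing at $0$, whose constants are independent of the interval length. This independence is essential: it is where the $\tau$-independence of $R_T$ comes from, and we avoid ever applying a trace embedding to $z$ on a short interval, which would blow up as $\tau\downarrow 0$. Collecting constants, $R_T$ depends only on $p,\kappa$ (and $T$ through the fixed-interval trace constant), and it is nondecreasing in $M_{p,\kappa,A}(0,T)$ and $C_A$, as required.

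Uniqueness follows from Definition \ref{def:MR}. If $u_1,u_2\in\MR^p(0,\tau,\wk)$ are two strong solutions, their difference is a strong solution with zero initial value and $f=0$. By \eqref{eq:MRest} on $(0,\tau)$, the difference vanishes.

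The main obstacle is the careful bookkeeping of the trace constants: $C_A$ must be applied only to a function on the full interval $[0,T]$, namely $z$, and the $\tau$-uniform embeddings must be used only for $v$, which vanishes at $t=0$.
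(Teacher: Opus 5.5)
Your construction is the paper's: extend $u_0$ by the trace method to $z\in\MR^p(0,\infty,\wk)$, solve for a correction $v$ with zero initial value on $(0,\tau)$, and control $v$ in the trace norms by the zero-trace embeddings, whose constants do not depend on the interval length. There is, however, a gap in the dependence of the constant. You apply \eqref{eq:traceembedding0T} and \eqref{eq:traceembeddingaT} to $z$ on the fixed interval $(0,T)$, and you accept that $R_T$ then depends on $T$ through this trace constant.

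That is not what the lemma asserts: $R_T$ must depend only on $p,\kappa$ and, nondecreasingly, on $M_{p,\kappa,A}(0,T)$ and $C_A$. The extra factor is not harmless either. The norm of $\MR^p(0,T,\wk)\hookrightarrow C([0,T];\Xtr)$ on functions not vanishing at $0$ blows up as $T\downarrow 0$. Testing with a constant function $t\mapsto x$, $x\in X_1$, gives a lower bound of order $T^{-(1+\kappa)/p}$. So $(0,T)$ is ``short'' in exactly the sense you wanted to avoid for $(0,\tau)$. This $T$-dependence would propagate into $C_0=R_T\vee L_T$ and into the constant of Theorem \ref{th: well-posed linear}. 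That constant is claimed to depend on $T$ only through $M_T$, $b_{\scriptscriptstyle[0,T]}$, $C_A$ and $C_{A_{r_i,\nu_i,\gamma_i}}$.

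The fix is immediate, and it is what the paper does. Since $z$ lives on the half-line, bound $\|z\|_{L^p(0,T,\wk;X_1)}$, $\|z\|_{C([0,T];\Xtr)}$ and $\|z\|_{C_{\nicefrac{\kappa}{p}}((0,T];\Xtrzero)}$ by the corresponding norms on $[0,\infty)$. Then apply the trace embeddings on $[0,\infty)$, whose constants depend only on $p$ and $\kappa$; these are $K_{\kappa,p}$ and $K_{0,p}$ in the paper, taken from \cite[Th.\ L.4.1]{HNVWvolume3}. You need this in two places:
\begin{enumerate}
\item inside \eqref{eq:A mapping sum}, to bound $\|Az\|_{L^p(0,T,\wk;X_0)}$ and hence $\|v\|_{\MR^p(0,\tau,\wk)}$;
\item for the $z$-part of the final trace-norm estimate of $u=v+z$.
\end{enumerate}
With that change your argument gives exactly the claimed dependence of $R_T$, independent of both $\tau$ and $T$ apart from $M_{p,\kappa,A}(0,T)$ and $C_A$.
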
 
\begin{proof}
Uniqueness follows from \eqref{eq:MRest}. For the existence and estimate, we exploit the trace method for real interpolation (see \cite[{\S}L.2]{HNVWvolume3} and \cite[\S 1.8]{Tri78}). Using the definition of the  $\MR^p$-space in \eqref{eq:def MRp}, let $z\in \MR^p(0,\infty,w_{\kappa})$ be such  that $z(0) = u_0$. 
Note that $z',Az\in L^p(0,\tau,\wk;X_0)$ thanks to \eqref{eq:A mapping sum}, \eqref{eq:traceembedding0T} and \eqref{eq:traceembeddingaT}. 
Let $v\in \MR^p(0,\tau,w_{\kappa})$ be the unique solution to the problem $v' + A v= - z'-Az$ with $v(0) = 0$. 
Then, $u \ceqq v+z$ satisfies \eqref{eq: in value part}. It remains to prove the estimates for $u$. 

Write $M\ceqq M_{p,\kappa,A}(0,T)$. 
By maximal regularity and by \eqref{eq:A mapping sum}, \eqref{eq:traceembedding0T} and \eqref{eq:traceembeddingaT}, we have
\begin{align}
 \|v\|_{\MR^p(0,\tau,w_{\kappa})} &\leq M \big(\|z'\|_{L^p(0,T,w_{\kappa};X_0)}+ \|A z\|_{L^p(0,T,w_{\kappa};X_0)}\big) \notag\\
&\leq M \|z'\|_{L^p(0,T,w_{\kappa};X_0)}\notag\\
&\quad+MC_A\big(\|z\|_{L^p(0,T,\wk;X_1)}+\|z\|_{C([0,T];\Xtr)}+\|z\|_{C_{\nicefrac{\kappa}{p}}((0,T];\Xtrzero)} \big) 
\notag\\ & \leq  M(1+ C_A) \|z\|_{\MR^p(0,\infty,w_{\kappa})}+MC_A\big(\|z\|_{C([0,\infty);\Xtr)}+\|z\|_{C_{\nicefrac{\kappa}{p}}((0,\infty);\Xtrzero)} \big)\notag\\
& \leq  \big(M(1+ C_A)+MC_A(K_{\kappa,p}+K_{0,p})\big) \|z\|_{\MR^p(0,\infty,w_{\kappa})}, \label{eq:v est}
\end{align} 
where  $K_{\kappa,p}\ceqq 64p(\frac{1}{p-1-\kappa}\vee\frac{1}{1+\kappa})$ and  $K_{0,p}\ceqq 64p(\frac{1}{p-1}\vee1)$ are constants independent of $\tau$ and $T$, since \eqref{eq:traceembedding0T} and \eqref{eq:traceembeddingaT} are applied on $[0,\infty)$ in the last line (see \cite[Th.\ L.4.1]{HNVWvolume3} for the constants). 

We conclude that
\begin{align*}
\|u\|_{\MR^p(0,\tau,w_{\kappa})} &\leq \|v\|_{\MR^p(0,\tau,w_{\kappa})}+ \|z\|_{\MR^p(0,\infty,w_{\kappa})} \\
& \leq  \big(M(1+ C_A)+1+MC_A(K_{\kappa,p}+K_{0,p})\big) \|z\|_{\MR^p(0,\infty,w_{\kappa})}.
\end{align*}
Taking the infimum over all $z\in \MR^p(0,\infty,w_{\kappa})$, the first stated  estimate  follows from the trace method for real interpolation (with an additional  constant $\frac{12p}{1+\kappa}$, see \cite[Th.\ L.2.3]{HNVWvolume3}). 

Moreover, by \eqref{eq:traceembedding0T},
\begin{align*}
\|u\|_{C([0,\tau];\Xtr)} &\leq \|v\|_{C([0,\tau];\Xtr)}+ \|z\|_{C([0,\infty);\Xtr)} \\
&\leq  K_{\kappa,p}C_{\kappa,p}\|v\|_{\MR^p(0,\tau,w_{\kappa})}+ K_{\kappa,p}\|z\|_{\MR^p(0,\infty,w_{\kappa})}, 
\end{align*} 
where $C_{\kappa,p}\ceqq 2+3(1-\frac{\kappa}{p-1})^{\frac{p-1}{p}}$ is a constant that is also independent of $\tau$ and $T$ since $v(0)=0$, see \cite[Th.\ L.4.1, Prop.\ L.4.5]{HNVWvolume3}. 

Similarly, by \eqref{eq:traceembeddingaT},
\begin{align*}
\|u \|_{C_{\nicefrac{\kappa}{p}}((0,\tau];\Xtrzero)} 
&\leq \|v\|_{C_{\nicefrac{\kappa}{p}}((0,\tau];\Xtrzero)}+ 
\|z\|_{C_{\nicefrac{\kappa}{p}}((0,\infty);\Xtrzero)} \\
&\leq  K_{0,p}C_{\kappa,p}\|v\|_{\MR^p(0,\tau,w_{\kappa})}+ K_{0,p}\|z\|_{\MR^p(0,\infty,w_{\kappa})}, 
\end{align*} 
see again \cite[Th.\ L.4.1, Prop.\ L.4.5]{HNVWvolume3} and recall that $v(0)=0$.  Combining these estimates with \eqref{eq:v est}, and taking the infimum over all $z\in \MR^p(0,\infty,w_{\kappa})$, the trace method for real interpolation completes the proof of the last two estimates (again with extra constant $\frac{12p}{1+\kappa}$, see \cite[Th.\ L.2.3]{HNVWvolume3}).
\end{proof}

In the sequel we will often use a weighted H\"older inequality, which we state explicitly for convenience. For $1<r< p,q<\infty$ such that $\frac{1}{r} = \frac{1}{p} + \frac1q$
and $\kappa,\nu\in\R$ one has that  
\begin{equation}\label{eq:weightedHolder}
 \|fg\|_{L^r(0,T,w_\nu)}\leq 
 \|f\|_{L^{q}(0,T,w_{\frac{\nu p-\kappa r}{p-r}})}
 \|g\|_{L^p(0,T,\wk)}, \ \ \ \ f,g\in L^0(0,T). 
\end{equation}

Combining earlier results, we obtain a useful estimate for the $B$-term in \eqref{eq:ABfu_0}. 
\begin{lemma}\label{lem:SBLp gamma} 
Let Assumptions \ref{ass:linear A transference} and \ref{ass: B linear} hold. 
Then for every $\tau\in(0,T]$ and $v\in L^p(0,\tau,\wk;X_1)$, one has   $B_iv \in  L^{r_i}(0,\tau,w_{\nu_i};X_{\gamma_i})$ for $1\leq i\leq k$, so $Bv \in\x(0,\tau)$. Moreover, 
\[
\|Bv\|_{\x(0,\tau)}
\leq b_{\scriptscriptstyle[0,\tau]} \|v\|_{L^p(0,\tau,\wk;X_1)}, 
\] 
 where $b_{\scriptscriptstyle[0,\tau]}$ was defined in  \eqref{eq: def b_T}. 
\end{lemma}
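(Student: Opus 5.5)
The plan is to verify the estimate term by term, using the decomposition $B=\sum_{i=1}^k B_i$ from Assumption \ref{ass: B linear} and the definition of the sum norm \eqref{eq:def sum norm}. Fix $\tau\in(0,T]$ and $v\in L^p(0,\tau,\wk;X_1)$. First, $B_i v$ is strongly measurable as an $X_{\gamma_i}$-valued function. Indeed, $v$ is a limit a.e.\ of simple $X_1$-valued functions, and $B_i(\cdot)x$ is strongly measurable for each fixed $x\in X_1$ by strong measurability in the strong operator topology, so $B_i(t)v(t)$ is an a.e.\ limit of strongly measurable functions. Pointwise we have
\[
\|B_i(t)v(t)\|_{X_{\gamma_i}}\leq b_i(t)\|v(t)\|_{X_1}.
\]

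For indices with $r_i=p$ we have $b_i=0$, so $B_i v=0$ contributes nothing, consistent with \eqref{eq: def b_T}. For $r_i<p$, I will apply the weighted H\"older inequality \eqref{eq:weightedHolder} with $r=r_i$, $\nu=\nu_i$, and $q=q_i$ determined by $\frac1{q_i}=\frac1{r_i}-\frac1p$, i.e.\ $q_i=\frac{pr_i}{p-r_i}$. The inputs are $f=b_i$ and $g=\|v(\cdot)\|_{X_1}$ on $(0,\tau)$. This gives
\[
\|B_iv\|_{L^{r_i}(0,\tau,w_{\nu_i};X_{\gamma_i})}\leq \|b_i\|_{L^{q_i}(0,\tau,w_{\frac{\nu_ip-\kappa r_i}{p-r_i}})}\|v\|_{L^p(0,\tau,\wk;X_1)},
\]
which is exactly the weighted norm appearing in Assumption \ref{ass: B linear}. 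The exponent condition $1<r_i<p,q_i<\infty$ of \eqref{eq:weightedHolder} holds because admissibility gives $r_i>\nu_i+1\geq 1$.

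Finally, $Bv=\sum_i B_iv$ holds in the ambient space $L^1(0,\tau;X_0)$. This uses the embeddings $X_{\gamma_i}\into X_0$ and $L^{r_i}(0,\tau,w_{\nu_i})\into L^1(0,\tau)$, the latter valid since $\nu_i<r_i-1$. Taking the $L^p(0,\tau,\wk;X_0)$-component equal to zero in the sum decomposition, the definition of the sum norm gives
\[
\|Bv\|_{\x(0,\tau)}\leq\sum_i\|B_iv\|_{L^{r_i}(0,\tau,w_{\nu_i};X_{\gamma_i})}\leq b_{\scriptscriptstyle[0,\tau]}\|v\|_{L^p(0,\tau,\wk;X_1)}.
\]
There is no real obstacle here. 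The only point needing care is the bookkeeping of the weight exponents in \eqref{eq:weightedHolder}: the identity $\frac{\nu_i}{r_i}=\frac{\kappa}{p}+\frac{1}{q_i}\cdot\frac{\nu_ip-\kappa r_i}{p-r_i}$ should be checked directly.
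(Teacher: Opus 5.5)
Your proposal is correct and follows the paper's proof, which consists of the pointwise bound $\|B_i(t)v(t)\|_{X_{\gamma_i}}\leq b_i(t)\|v(t)\|_{X_1}$ combined with the weighted H\"older inequality \eqref{eq:weightedHolder}. You also spell out details the paper leaves implicit: strong measurability of $B_iv$, the trivial case $r_i=p$, the weight-exponent identity, and the embedding into the ambient space $L^1(0,\tau;X_0)$ needed for the sum-norm bound.
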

\begin{proof}  By Assumption \ref{ass: B linear}, we have  $\|B_i(\cdot)\|_{\calL(X_1,X_{\gamma_i})}\leq b_i(\cdot)$. The claim thus follows from the weighted H\"older inequality \eqref{eq:weightedHolder}.  
\end{proof}

The above lemmas allow us to prove well-posedness of the problem \eqref{eq:ABfu_0}, along with a useful energy estimate. We obtain uniqueness in the space $\MR_{\rm tr}(0,\tau)$ defined in \eqref{eq:MRtr def}, which is larger than $\MR_+(0,\tau)$ due to Corollary \ref{cor: emb}.

\begin{proposition} \label{prop: linearized well-posed}
Let Assumptions \ref{ass:linear A transference} and \ref{ass: B linear} hold.  
Then for any $\tau\in (0,T]$, $u_0\in (X_0, X_1)_{1-\frac{1+\kappa}{p},p}$,  $f\in  \x(0,\tau)$ and $v\in L^p(0,\tau,\wk;X_1)$, there exists a unique strong solution $u\in \MR_+(0,\tau)$ to
  \begin{align}\label{eq:ABfu_0}
\begin{cases}
u' + A u + B v= f ,\\
u(0)=u_0 
\end{cases}
\end{align}
on $[0,\tau]$. Furthermore, the following estimate holds: 
\begin{align*} 
 &\|u\|_{\MR_+(0,\tau)}    
+\|u\|_{L^p(0,\tau,\wk;X_1)}+\|u\|_{C([0,\tau];\Xtr)}+\|u\|_{C_{\nicefrac{\kappa}{p}}((0,\tau];\Xtrzero)}\\
 &  \qquad  \leq 
R_T\|u_0\|_{X_{1-\frac{1+\kappa}{p},p}}     +L_T\big(  b_{\scriptscriptstyle[0,\tau]}\|v\|_{L^p(0,\tau,\wk;X_{1})} +  \|f\|_{\x(0,\tau)}\big), 
\end{align*} 
where $R_T$ is the constant from Lemma \ref{lem:initial mixed scale}, and $L_T$ is a constant depending only on $p,\kappa$, and  $r_i,\nu_i,\gamma_i$ for $1\leq i\leq k$, and nondecreasingly on $M_T$ defined by \eqref{eq: def M_T}, and $C_A$ and  $C_{A_{r_i,\nu_i,\gamma_i}}$  from \eqref{eq:A mapping sum}. In particular, $R_T$ and $L_T$ are independent of $\tau,  u_0, f$, and $v$. 

Uniqueness also holds amongst strong solutions in the larger space
\begin{equation}\label{eq:MRtr def}
\MR_{\rm tr}(0,\tau)\ceqq L^p(0,\tau,w_{\kappa};X_1)\cap C([0,\tau];X_{1-\frac{1+\kappa}{p},p})\cap C_{\nicefrac{\kappa}{p}}((0,\tau];X_{1-\frac{1}{p},p}).
\end{equation}
\end{proposition}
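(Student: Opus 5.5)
Since $v$ is given, $Bv$ is just a fixed inhomogeneity, so the equation is linear in $u$ with only $A$ on the left. By Lemma \ref{lem:SBLp gamma}, $Bv\in\x(0,\tau)$ with $\|Bv\|_{\x(0,\tau)}\le b_{\scriptscriptstyle[0,\tau]}\|v\|_{L^p(0,\tau,\wk;X_1)}$. Set $g\ceqq f-Bv\in\x(0,\tau)$; it then suffices to solve $u'+Au=g$, $u(0)=u_0$, and to bound $u$ by $R_T\|u_0\|+L_T\|g\|_{\x(0,\tau)}$.

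\emph{Existence.} Fix $\varepsilon>0$ and choose a decomposition $g=g_0+\sum_{i=1}^k g_i$ with $g_0\in L^p(0,\tau,\wk;X_0)$, $g_i\in L^{r_i}(0,\tau,w_{\nu_i};X_{\gamma_i})$ and $\sum_i\|g_i\|\le(1+\varepsilon)\|g\|_{\x(0,\tau)}$. Build $u$ from four pieces:
\begin{itemize}
\item $u^{\rm in}$, the solution from Lemma \ref{lem:initial mixed scale} with data $u_0$;
\item $u^0$, the solution from Lemma \ref{lem:Sf Lr MR transference}(i) with data $g_0$;
\item $u^i\in F_i(0,\tau)$, the solution from Lemma \ref{lem:Sf Lr MR transference}(ii) with data $g_i$, for $i=1,\dots,k$.
\end{itemize}
Each $F_i(0,\tau)$ embeds in $\MR_+(0,\tau)$ with norm at most one. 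Hence $u\ceqq u^{\rm in}+u^0+\sum_i u^i$ lies in $\MR_+(0,\tau)$, is a strong solution (linearity in the definition of strong solution), and its $\MR_+$-norm is bounded by $R_T\|u_0\|+M_T(1+M_TL\max_i(C_{A_{r_i,\nu_i,\gamma_i}}+C_A))(1+\varepsilon)\|g\|$. Letting $\varepsilon\downarrow0$ gives the $\MR_+$ estimate.

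\emph{Remaining norms.} The $L^p(0,\tau,\wk;X_1)$, $C([0,\tau];\Xtr)$ and $C_{\kappa/p}((0,\tau];\Xtrzero)$ norms need constants independent of $\tau$. I would not apply Corollary \ref{cor: emb} to $u$ as a whole, since that constant blows up as $\tau\downarrow0$. Instead, estimate piece by piece:
\begin{itemize}
\item $u^{\rm in}$ is controlled directly by Lemma \ref{lem:initial mixed scale}.
\item $u^0$ and $u^i$ vanish at $0$, so the trace embeddings \eqref{eq:traceembedding0T}, \eqref{eq:traceembeddingaT}, Proposition \ref{prop: emb Lr} and Lemma \ref{lem:trace} all hold with $\tau$-independent constants.
\end{itemize}
For the $u^i$ there is a subtlety: the decomposition of $u^i\in F_i$ into its $L^p$-part $w$ and its $L^r$-part $v$ must also consist of functions vanishing at $0$. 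This is true by construction in the proof of Lemma \ref{lem:Sf Lr MR transference}(ii). So I would use those explicit components rather than an arbitrary decomposition. This yields the constant $L_T$ with the claimed monotone dependence.

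\emph{Uniqueness.} This is the main obstacle, because it must hold in the larger space $\MR_{\rm tr}(0,\tau)$, where $u'$ has no a priori regularity beyond what the strong-solution definition gives. Let $u_1,u_2\in\MR_{\rm tr}(0,\tau)$ be strong solutions and set $w\ceqq u_1-u_2$. Then $w$ is a strong solution of $w'+Aw=0$, $w(0)=0$, with $w\in\MR_{\rm tr}$. The key point is that the first mapping property in \eqref{eq:A mapping sum}, which holds on subintervals, gives $Aw\in L^p(0,\tau,\wk;X_0)$. From the integral identity, $w'=-Aw\in L^p(0,\tau,\wk;X_0)$, so $w\in\MR^p(0,\tau,\wk)$ with $w(0)=0$. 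Maximal $L^p(0,T,\wk)$-regularity of $A$, through the estimate \eqref{eq:MRest} on $(0,\tau)$, then forces $w=0$. Because $\MR_+(0,\tau)\hookrightarrow\MR_{\rm tr}(0,\tau)$ by Corollary \ref{cor: emb}, this also gives uniqueness in $\MR_+$.
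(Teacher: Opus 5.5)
Your proposal is correct and follows the paper's proof essentially step by step. You superpose the solution of the homogeneous problem from Lemma \ref{lem:initial mixed scale} with the solutions from Lemma \ref{lem:Sf Lr MR transference} for each piece of a decomposition; you decompose $f-Bv$ directly instead of setting $g_i=f_i-B_iv$, which is equivalent via Lemma \ref{lem:SBLp gamma}. You then get uniqueness in $\MR_{\rm tr}(0,\tau)$ from \eqref{eq:A mapping sum} together with \eqref{eq:MRest}, and bound the trace-type norms piece by piece. Your point that the $\tau$-independent embedding constants should be applied to the explicit components $v,w$ of $u^i$, both vanishing at $0$, rather than to an arbitrary decomposition of $u^i\in\MR_+(0,\tau)$, is a worthwhile sharpening of the paper's brief appeal to Corollary \ref{cor: emb} with $u^i(0)=0$.
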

\begin{proof}
Let $\tau\in (0,T]$. 
First, by Lemma \ref{lem:initial mixed scale}, the problem 
$\tilde{u}'+A\tilde{u}=0, 
\tilde{u}(0)=u_0$ has a unique strong solution $\tilde{u}\in L^p(0,\tau,\wk;X_1)\cap W^{1,p}(0,\tau,\wk;X_0)$, and   
\begin{equation}\label{eq:Lp Su_0}
\|\tilde{u}\|_{L^p(0,\tau,\wk;X_1)\cap W^{1,p}(0,\tau,\wk;X_0)}+\|\tilde u\|_{C([0,\tau];\Xtr)} +\|\tilde u\|_{C_{\nicefrac{\kappa}{p}}((0,\tau];\Xtrzero)}\leq  R_T\|u_0\|_{X_{1-\frac{1+\kappa}{p},p}}, 
\end{equation}
where $R_T$ is a constant that depends only on $p,\kappa$, and nondecreasingly on $M_{p,\kappa,A}(0,T)$ and $C_A$. 

Next, let $f = f_0+ \sum_{i=1}^k f_i$ with $f_0\in L^p(0,\tau,\wk;X_0)$ and $f_i\in L^{r_i}(0,\tau,w_{\nu_i};X_{\gamma_i})$. Put $g_0=f_0$ and $g_i=f_i-B_iv$ for $1\leq i\leq  k$. For $i=0$, set $(r_0,\nu_0,\gamma_0)=(p,\kappa,0)$. By Lemma \ref{lem:SBLp gamma}, we have $g_i\in L^{r_i}(0,\tau,w_{\nu_i};X_{\gamma_i})$  for $1\leq i\leq  k$. 
By Lemma \ref{lem:Sf Lr MR transference},  there exists a   strong solution $u^i\in \MR_+(0,\tau)$ to 
\begin{equation}\label{eq: u_i}
\begin{cases}
(u^i)'+Au^i=g_i,\\
u^i(0)=0.
\end{cases}
\end{equation} 
It follows that $u\ceqq \tilde{u}+\sum_{i=0}^k u^i\in\MR_+(0,\tau)$ is a strong solution to \eqref{eq:ABfu_0}.

For uniqueness of strong solutions belonging to $\MR_{\rm tr}(0,\tau)$, suppose that $u_1,u_2\in \MR_{\rm tr}(0,\tau)$ are strong solutions to \eqref{eq:ABfu_0}. Then $w\ceqq u_1-u_2$ satisfies \eqref{eq: in value part} with $u_0=0$. 
In particular, $w'=-Aw$ a.e.\ on $[0,\tau]$ (see \cite[Lem.\ 2.5.8]{HNVWvolume1}). By \eqref{eq:A mapping sum} applied on $[0,\tau]$, we see that $Aw\in L^p(0,\tau,\wk;X_0)$, and thus $w\in L^p(0,\tau,\wk;X_1)\cap W^{1,p}(0,\tau,\wk;X_0)$. Now the maximal $L^p(0,T,\wk;X_0)$-regularity of $A$, specifically \eqref{eq:MRest}, implies that $w=0$, proving uniqueness. 

It remains to prove the energy estimate. Let $f\in \x(0,\tau)$ and 
let $f = f_0+ \sum_{i=1}^k f_i$ be an arbitrary decomposition of $f$  with $f_i\in L^{r_i}(0,\tau,w_{\nu_i};X_{\gamma_i})$. 
We can define $\tilde{u}$ and $u^i$ from Lemma \ref{lem:Sf Lr MR transference} as above (now with potentially different $f_i$) and obtain that $\tilde{u}+\sum_{i=0}^k u^i$ is a strong solution to \eqref{eq:ABfu_0} on $[0,\tau]$ that belongs to $\MR_+(0,\tau)$, 
thus uniqueness gives $u= \tilde{u}+\sum_{i=0}^k u^i$. 
Moreover, by \eqref{eq:Lp Su_0} and by Lemmas \ref{lem:Sf Lr MR transference} and \ref{lem:SBLp gamma}, putting $C_*\ceqq C_A+\max_{1\leq i\leq k}C_{A_{r_i,\nu_i,\gamma_i}}$, we have  
\begin{align}
 \|u\|_{\MR_+(0,\tau)} 
&\leq \|\tilde{u}\|_{\MR_+(0,\tau)} + \textstyle{\sum_{i=0}^k}\|u^i\|_{\MR_+(0,\tau)} \notag\\
&\leq R_T\|u_0\|_{X_{1-\frac{1+\kappa}{p},p}}+
M_T\|f_0\|_{L^p(0,\tau,\wk;X_0)}+ \textstyle{\sum_{i=1}^k}M_T(1+M_T LC_*)\|g_i\|_{L^{r_i}(0,\tau,\wni;X_{\gamma_i})}
 \notag\\ 
& \leq R_T\|u_0\|_{X_{1-\frac{1+\kappa}{p},p}} + M_T \|f_0\|_{L^p(0,\tau,\wk;X_0)} \notag\\
&\qquad\qquad  +M_T(1+M_T LC_*)\Big(b_{\scriptscriptstyle[0,\tau]}\| v\|_{L^p(0,\tau,\wk;X_{1})} +  \textstyle{\sum_{i=1}^k}\|f_i\|_{L^{r_i}(0,\tau,w_{\nu_i};X_{\gamma_i})}\Big), \label{eq:est by split}
\end{align}
where we recall \eqref{eq: def b_T} for the last line. 
Taking the infimum over all decompositions $f_0+\sum_{i=1}^k f_i$ of $f$, we obtain the $\MR_+$-estimate. 

Finally, we prove the remaining estimates. For $i=0,\ldots, k$,   Corollary \ref{cor: emb} and   $u^i(0)=0$ imply  
$$\|u^i\|_{L^p(0,\tau,\wk;X_1)}+\|u^i\|_{C([0,\tau];\Xtr)}+\|u^i\|_{C_{\nicefrac{\kappa}{p}}((0,\tau];\Xtrzero)}\leq K\|u^i\|_{\MR_+(0,\tau)}$$ 
for a  constant $K$ depending only on $p,\kappa,$ and $r_i,\nu_i,\gamma_i$ for $1\leq i\leq k$. 
Moreover, \eqref{eq:Lp Su_0} already provided the necessary estimates for $\tilde{u}$. Combining these estimates with those in \eqref{eq:est by split} completes the proof. 
\end{proof}

\begin{remark}\label{rem:reg}
  The final uniqueness statement of Proposition \ref{prop: linearized well-posed} implies the following regularization: if $\tilde u\in  \MR_{\rm tr}(0,\tau)$ and $\tilde u$ is a strong solution to the nonlinear equation \eqref{eq:parabolic_problem_intro}, then $\tilde u\in \MR_+(0,\tau)$. This follows from applying Proposition \ref{prop: linearized well-posed} with $v=\tilde u$. Indeed, noting that $\tilde u$ solves \eqref{eq:ABfu_0}, uniqueness implies that $\tilde u=u\in \MR_+(0,\tau)$.
\end{remark}

\begin{remark}
  In Theorem \ref{th: well-posed linear}, the arbitrary but fixed admissible triples $(r_i,\nu_i,\gamma_i)$ for $i=1,\ldots,k$ are the same for $f$ and $B$, but this is just for ease of notation. Indeed, $f_i$ or $B_i$ may be chosen as zero, so their admissible triples can just be united with each other.
\end{remark}

After these preparations, we are ready to prove the main result, Theorem \ref{th: well-posed linear}.  The proof follows a structure  similar to that of Theorem \ref{thm:perturbLp}, with the following  main differences: 
\begin{itemize}
\item The solution now belongs to   $\MR_+(0,T)$, which is larger than $\MR^p(0,T)$. In particular, we no longer have $u', Bu\in L^{p}(0,T,w_{\kappa};X_0)$.    
\item The perturbation $Bu$ can already be considered if $u\in L^p(0,T,w_{\kappa};X_1)$.  
 The fixed points of $\Phi$ and $\Phi_n$ are constructed in $L^p(0,T,w_{\kappa};X_1)$. 
\end{itemize}

\begin{proof}[Proof of Theorem \ref{th: well-posed linear}] 

{\em    Step 1 (local solution): } 

For  $\tau\in (0,T]$,  we define a map $\Phi\col L^p(0,\tau,\wk;X_1)\to L^p(0,\tau,\wk;X_1)$ by $\Phi(v)\ceqq u$, where $u\in \MR_+(0,\tau)\subset L^p(0,\tau,\wk;X_1)$ is the strong solution to \eqref{eq:ABfu_0} on $(0,\tau)$, which exists uniquely by Proposition \ref{prop: linearized well-posed}. 
Let $v_1,v_2\in L^p(0,\tau,\wk;X_{1})$. Then $u \ceqq \Phi(v_1) - \Phi(v_2)$ solves
\begin{align*}
\begin{cases}
u' + A u + B (v_1-v_2)= 0 \quad\text{ on }(0,\tau),\\
u(0)=0.
\end{cases}
\end{align*}
Therefore, Proposition \ref{prop: linearized well-posed} gives    
\begin{align}\label{eq: difference est}
\|u\|_{L^p(0,\tau,\wk;X_{1})}\leq  C_0 b_{\scriptscriptstyle[0,\tau]}\|v_1-v_2\|_{L^p(0,\tau,\wk;X_{1})}, 
\end{align}
where we let $C_0\ceqq R_T\vee L_T$, with $R_T$ and $L_T$ the constants from Proposition \ref{prop: linearized well-posed}, and we use that  $\tau\in(0, T]$. 
Note that for \eqref{eq: difference est}, the constant $L_T$ suffices, but we  define $C_0$ as the maximum for later use.    

Choose $\tau\in (0,T]$ such that $b_{\scriptscriptstyle[0,\tau]} = 1/(2C_0)$ (see \eqref{eq: def b_T}) if possible, and otherwise set $\tau = T$. 
It follows from the above that
\begin{align}\label{eq:L contraction}
\|\Phi(v_1) - \Phi(v_2)\|_{L^p(0,\tau,\wk;X_{1})} \leq \frac12 \|v_1- v_2\|_{L^p(0,\tau,\wk;X_{1})}.
\end{align}
By the Banach fixed-point theorem, there exists a unique $u\in L^p(0,\tau,\wk;X_1)$ such that  $\Phi(u) = u$. Recall that $\mathrm{Ran}(\Phi)\subset \MR_+(0,\tau)$ by definition of $\Phi$. Thus, within $\MR_+(0,\tau)$,  $u$ is the unique strong solution to \eqref{eq:parabolic_problem_intro} on $[0,\tau]$.  

{\em  Step 2 ($L^p$-estimate for local solution): } 

For $u$ from Step 1, we have 
\begin{align*}
\| {u}\|_{L^p(0,\tau,\wk;X_1)}
 & \leq \|\Phi( {u}) \|_{L^p(0,\tau,\wk;X_1)}\\
  & \leq \|\Phi( {u}) - \Phi(0)\|_{L^p(0,\tau,\wk;X_1)} + \|\Phi(0) \|_{L^p(0,\tau,\wk;X_1)}
\\ & \leq \frac12 \| {u}\|_{L^p(0,\tau,\wk;X_1)} + \|\Phi(0)\|_{L^p(0,\tau,\wk;X_1)}. 
\end{align*}
Combined with Proposition \ref{prop: linearized well-posed} applied to  $v=0$, we obtain 
\begin{align}\label{eq:aprioriLest} 
\| {u}\|_{L^p(0,\tau,\wk;X_1)}\leq 2\|\Phi(0) \|_{L^p(0,\tau,\wk;X_1)} \leq 2C_0\|u_0\|_{X_{1-\frac{1+\kappa}{p},p}} + 2C_0 \|f \|_{\x(0,\tau)}.
\end{align} 

{\em   Step 3 (iterative construction):} 

We iteratively extend the solution $u$ from Step 1 to intervals $[\tau_{n},\tau_{n+1}]$. Although the power weights for $u$, $f$, and $b_i$ behave like the unweighted case ($w_0=1$) on these intervals, we cannot simply apply Step 1 with $\kappa=0$ (to a shifted  equation), because our final constants should only depend on $T$ and $b_{\scriptscriptstyle[0,T]}$ and not on the lengths of the subintervals.

Instead, we use the following recursive procedure. Set $\tau_0\ceqq 0$ and $\tau_1 \ceqq \tau$ (from Step 1). Let ${C}_0$ be the same constant as in Step 1. 
Then, for $n\geq 2$, choose $\tau_n\in(\tau_{n-1},T)$  such that 
\begin{equation}\label{eq: b pieces}
\textstyle{\sum_{i=1}^k}  \|b_i\|_{L^{\frac{pr_i}{p-r_i}}(\tau_{n-1},\tau_{n},w_{\frac{\nu_i p-\kappa r_i}{p-r_i}})} = 1/(2{C}_0),
\end{equation}    
recalling that  
$b_i \in   L^{\frac{pr_i}{p-r_i}}(0,T,w_{\frac{\nu_i p-\kappa r_i}{p-r_i}})$ by Assumption \ref{ass: B linear}. 
If this is no longer possible, we set $N = n$ and $\tau_N = T$. 

Now fix $n\in \{1, \ldots, N-1\}$, and suppose that  a unique strong solution $u\in\MR_+(0,\tau_n)$ to \eqref{eq:parabolic_problem_intro}  has been constructed on $[0,\tau_n]$. (For $n=1$, this holds since $u=\Phi(u)\in \MR_+(0,\tau)$. ) 
Define $u_n \ceqq u(\tau_n)$. Note that $u_n\in \Xtrzero$ thanks to 
Corollary \ref{cor: emb}. We will now split the problem on $[\tau_n, \tau_{n+1}]$ into two subproblems. In the first problem, we take care of the initial value problem without inhomogeneity:
\begin{align} 
&\begin{cases}
\tilde{w}' + A \tilde{w}  = 0\;\,  \text{ on }(\tau_n,\tau_{n+1}),\\
\tilde{w}(\tau_n)=u_n,
\end{cases} \label{eq: in val tau_n}
\end{align}
The second problem treats the inhomogeneous part, and it is crucial to formulate it on $[0,\tau_{n+1}]$ in order to be able to apply the weighted form of maximal $L^p$-regularity with constants which do not depend on $\tau_{n+1} - \tau_{n}$:
\begin{align}
\begin{cases}
w' + A w + (B\one_{(\tau_n, \tau_{n+1})})v= f\one_{(\tau_n, \tau_{n+1})} -B\tilde{w}\one_{(\tau_n,  \tau_{n+1})}  \;\,\text{ on }(0,\tau_{n+1}),\\
w(0)=0, 
\end{cases} \label{eq: zero till tau_n}
\end{align} 
where $v\in L^p(0,\tau_{n+1},\wk;X_1)$ is given but arbitrary and $\tilde{w}$ is given by \eqref{eq: in val tau_n}. 

Firstly, \eqref{eq: in val tau_n} has a unique strong solution 
$$\tilde{w}\in L^p(\tau_n,\tau_{n+1},\wk;X_1)\cap W^{1,p}(\tau_n,\tau_{n+1},\wk;X_0),$$ 
due to maximal $L^p(0,T,\wk;X_0)$-regularity of $A$ and Lemma \ref{lem:initial}.   
To obtain an $L^p$-estimate for $\tilde{w}$ that does not depend on $ \tau_n$ and $\tau_{n+1} $, we consider $\tilde{v}\ceqq u\one_{[0,\tau_n)}+\tilde{w}\one_{[\tau_n,\tau_{n+1}]}$. 
Since $u(\tau_n)=\tilde{w}(\tau_n)$, we have $\tilde{v}\in \MR_{\rm tr}(0,\tau_{n+1})$ (defined in \eqref{eq:MRtr def}) and $\tilde{v}$ is a strong solution to   
\begin{align*}
\begin{cases}
\tilde{u}' + A\tilde{u} + (B\one_{(0, \tau_n)})\tilde{v}\one_{(0, \tau_n)}= f\one_{(0, \tau_n)}    \quad\text{ on }(0,\tau_{n+1}),\\
\tilde{u}(0)=u_0,  
\end{cases}  
\end{align*}
where $\tilde{B}\ceqq B\one_{(0, \tau_n)}$ satisfies Assumption \ref{ass: B linear} with $\tilde{b}_i\ceqq b_i\one_{(0, \tau_n)}$. 
Thus Proposition \ref{prop: linearized well-posed} and Remark \ref{rem:reg} yield
\begin{align}
\|\tilde{w}&\|_{L^p(\tau_n,\tau_{n+1},\wk;X_1)} \leq 
\|\tilde{v}\|_{L^p(0,\tau_{n+1},\wk;X_1)}\notag\\
&\qquad\leq C_0\Big(\|u_0\|_{X_{1-\frac{1+\kappa}{p},p}} +  \tilde{b}_{\scriptscriptstyle[0,\tau_{n+1}]}\|\tilde{v}\one_{(0, \tau_n)}\|_{L^p(0,\tau_{n+1},\wk;X_{1})} +  \|f\one_{(0, \tau_n)}\|_{\x(0,\tau_{n+1})}\Big)\notag\\
&\qquad= C_0\Big(\|u_0\|_{X_{1-\frac{1+\kappa}{p},p}} +  {b}_{\scriptscriptstyle[0,\tau_n]}\|u\|_{L^p(0, \tau_n,\wk;X_1)}+  \|f\|_{\x(0,\tau_n)}\Big). \label{eq: Lpest tilde w}
\end{align} 
Here, $C_0$ is the constant introduced in Step 1, which is allowed since $\tau_{n+1}\leq T$.

Now we turn to equation \eqref{eq: zero till tau_n}. Note that $\hat{B}\ceqq B\one_{(\tau_n, \tau_{n+1})}$ satisfies Assumption \ref{ass: B linear} with $T$ replaced by $\tau_{n+1}$, with the same admissible triples as for $B$, and with $b_i$ replaced by 
\[
 \hat{b}_i \ceqq b_i \one_{(\tau_n, \tau_{n+1})}\in   L^{\frac{pr_i}{p-r_i}}(0,\tau_{n+1},w_{\frac{\nu_i p-\kappa r_i}{p-r_i}}).
\]
Moreover, we have $f\one_{(\tau_n,\tau_{n+1})}-B\tilde{w}\one_{(\tau_n,\tau_{n+1})}\in  \x(0,T)$ by Lemma \ref{lem:SBLp gamma}. 
Therefore, for any $v\in L^p(0,\tau_{n+1},\wk;X_1)$, Proposition \ref{prop: linearized well-posed} yields a unique strong solution $w\eqqc \Phi_n(v)$ to \eqref{eq: zero till tau_n} with $w\in \MR_+(0,\tau_{n+1})\subset L^p(0,\tau_{n+1},\wk;X_1)$. Moreover,  by virtue of Proposition \ref{prop: linearized well-posed} and \eqref{eq: b pieces},  the analog of  \eqref{eq:L contraction} holds: 
\begin{align*}
\|\Phi_n(v_1)-\Phi_n(v_2)\|_{L^p(0,\tau_{n+1},\wk;X_{1})}&\leq C_0 \hat{b}_{\scriptscriptstyle[0,\tau_{n+1}]}\|v_1-v_2\|_{L^p(0,\tau_{n+1},\wk;X_{1})}\\
&\leq \frac12\|v_1-v_2\|_{L^p(0,\tau_{n+1},\wk;X_{1})}.
\end{align*}
Thus $\Phi_n$ has a unique fixed point $w \in L^p(0,\tau_{n+1},\wk;X_{1})$, and by the definition of $\Phi_n$,  
\[
w=\Phi_n(w)\in \MR_+(0,\tau_{n+1}). 
\]
Moreover (analogous to \eqref{eq:aprioriLest}, now with $u_0=0$): 
\begin{align}\label{eq: w^n est}
\begin{split} 
\|w\|_{L^p(0, \tau_{n+1},\wk;X_1)}&\leq 2\|\Phi_n(0)\|_{L^p(0,\tau_{n+1},\wk;X_1)} \\
&\leq  2C_0 \|f\one_{(\tau_n, \tau_{n+1})}-B\tilde{w}\one_{(\tau_n,\tau_{n+1})}\|_{\x(0,\tau_{n+1})} \\
&  \leq 2C_0 \|f \|_{\x(\tau_n, \tau_{n+1})}+2C_0\hat{b}_{\scriptscriptstyle[0,\tau_{n+1}]}\| \tilde{w} \|_{L^p(\tau_n,\tau_{n+1},\wk;X_1)} \\
&  \leq 2C_0 \|f \|_{\x(\tau_n, \tau_{n+1})}+ \| \tilde{w} \|_{L^p(\tau_n,\tau_{n+1},\wk;X_1)}. 
\end{split}
\end{align}

Now define 
$$
u^n\ceqq w\one_{[\tau_n,\tau_{n+1}]}+\tilde{w}\one_{[\tau_n,\tau_{n+1}]}\in \MR_+(\tau_n,\tau_{n+1}). 
$$ 
Then,   \eqref{eq: Lpest tilde w} and \eqref{eq: w^n est} yield  
\begin{align}
\|u^n&\|_{L^p(\tau_n,\tau_{n+1},\wk;X_1)}\notag\\
 &\,\leq 2C_0\|f\|_{\x(\tau_n,\tau_{n+1})} +  2C_0\Big(\|u_0\|_{\Xtr}+b_{\scriptscriptstyle[0,\tau_n]}\|u\|_{L^p(0,\tau_n,\wk;X_1)}+\|f\|_{\x(0,\tau_n)}\Big),  \notag \\
 &\qquad\leq 4C_0\|f\|_{\x(0,T)} +  2C_0 \|u_0\|_{\Xtr}+n\|u\|_{L^p(0,\tau_n,\wk;X_1)},  
\label{eq:apriori MR n step}\raisetag{-13pt}
\end{align}
where the last line holds by \eqref{eq: b pieces} and   
the fact that
\begin{align*}
 \|f\|_{\x(s,t)}\leq \|f \|_{\x(0,T)},\qquad 0\leq s< t\leq T, 
 \end{align*}
which is easily verified using the sum space definition \eqref{eq:def sum sp f} and \eqref{eq:def sum norm} and considering decompositions of $f\in \x(0,T)$.

Finally, we  extend $u$ to a mapping on $[0,\tau_{n+1}]$ by defining
\[
u=u^n \text{ on }[\tau_n,\tau_{n+1}]. 
\]
By Corollary \ref{cor: emb} and since $u^n(\tau_n)=u_n=u(\tau_n)$, $u\in \MR_{\rm tr}(0,\tau_{n+1})$. Moreover, $u$  is a strong solution to \eqref{eq:parabolic_problem_intro} on $[0,\tau_{n+1}]$, so Proposition \ref{prop: linearized well-posed} applied with $v=u$ yields   $u\in \MR_+(0,\tau_{n+1})$ (see Remark \ref{rem:reg}).
 
To complete the induction step, it remains to argue why $u$ is the  unique strong solution  within $\MR_+(0,\tau_{n+1})$. Let $u_1,u_2\in \MR_+(0,\tau_{n+1})$ be strong solutions to \eqref{eq:parabolic_problem_intro}. By uniqueness in the hypothesis, we find $u_1=u_2$ on $[0,\tau_n]$. In particular, $u_1(\tau_n)=u_2(\tau_n)$, using Corollary \ref{cor: emb}. 
Next, note that within $ \MR_+(\tau_n,\tau_{n+1})$, strong solutions to   
\begin{align*}
\begin{cases}
u' + A u + Bu= f \quad\text{ on }(\tau_n,\tau_{n+1}),\\
u(\tau_n)=u_n 
\end{cases}
\end{align*}
are unique. Indeed, if  $u_1,u_2\in \MR_+(\tau_n,\tau_{n+1})$ are strong solutions to the above equation, then we can define $w_i\ceqq (u_i -\tilde{w})\one_{[\tau_n,\tau_{n+1}]}$ for $i=1,2$, with $\tilde{w}\in  L^p(\tau_n,\tau_{n+1},\wk;X_1)\cap W^{1,p}(\tau_n,\tau_{n+1},\wk;X_0)$ the unique strong solution to \eqref{eq: in val tau_n}. 
Then $w_i\in \MR_{\rm tr}(0,\tau_{n+1})$ (since $w_i(\tau_n)=0$ and by Corollary \ref{cor: emb}) and $w_i$ solves \eqref{eq: zero till tau_n} with $v=w_i$, so Remark \ref{rem:reg} yields $w_i\in\MR_+(0,\tau_{n+1})$. Moreover, $w_1$ and $w_2$ are fixed points for the map $\Phi_n$ defined above. Since $\Phi_n$ has a unique fixed point, we obtain $u_1=u_2$ on $[\tau_n,\tau_{n+1}]$.  
We conclude that the required uniqueness holds on $[0,\tau_{n+1}]$.

We can now repeat the above procedure for $n+1$, until we reach $\tau_N=T$.  

{\em  Step 4 ($L^p$-estimate):} 

In Step 3, we have produced a strong solution $u\in \MR_+(0,T)$ to \eqref{eq:parabolic_problem_intro} on $[0,T]$. 
Next, we collect $L^p$-estimates for $u$ that we already proved during the iteration procedure.   
Indeed,   by  \eqref{eq:apriori MR n step} (for $n\geq 1$) and by \eqref{eq:aprioriLest} (for $n=0$), we obtain for all $n\in\{0,1,\ldots,N-1\}$: 
\begin{align*} 
 \|u \|_{L^p(0,\tau_{n+1},\wk;X_1)} &\leq  \|u\|_{L^p(0,\tau_{n},\wk;X_1)}+\|u^n\|_{L^p(\tau_n,\tau_{n+1},\wk;X_1)}\\ 
 &\leq (1+n)\|u\|_{L^p(0,\tau_n,\wk;X_1)}+ 4C_0\|f\|_{\x(0,T)} +  2C_0 \|u_0\|_{\Xtr}.
\end{align*}
Consequently, the direct formula for this recursion, and estimating $\sum_{j=1}^{N} \frac{1}{j!}\leq  e-1 \leq 7/4$, 
\begin{align}\label{eq: Lp est sum}
\begin{split}
\|u\|_{L^p(0,T,\wk;X_1)}&\leq 4C_0 \big(\|u_0\|_{\Xtr}+\|f \|_{\x(0,T)}\big)N!\textstyle{ \sum_{j=1}^{N}}\frac{1}{j!} \\
&   \leq7N!C_0\big(\|u_0\|_{\Xtr}+\|f \|_{\x(0,T)}\big). 
\end{split}
\end{align} 

Now we bound $N$ from above. If $p=r_i$ for all $i$, then $B=0$ by Assumption \ref{ass: B linear}  and the theorem is trivial. For $B\neq 0$, putting $Q\ceqq \min\{\frac{p-r_i}{pr_i}:1\leq i\leq k,p\neq r_i\}\in(0,1)$, we have for $N\geq 3$:
$$
b_{[0,T]} 
\geq   \textstyle{ \sum_{1\leq i\leq k,p\neq r_i}  (N-2)^{{Q}-1}\sum_{n=1}^{N-2} }\|b_i\|_{L^{\frac{pr_i}{p-r_i}}(\tau_n,\tau_{n+1},w_{\frac{\nu_i p-\kappa r_i}{p-r_i}})}  \geq (N-2)^{{Q}}\frac{1}{2{C}_0},  
$$ 
using \eqref{eq: b pieces} and \eqref{eq: def b_T}. It follows that $N \leq (2{C}_0b_{\scriptscriptstyle[0,T]})^{1/Q}+2\eqqc M$ (also for $N\leq 2$), and from \eqref{eq: Lp est sum},   
we can conclude that
\begin{equation}\label{eq: Lp est for fixed point}
\|u\|_{L^p(0,T,\wk;X_1)}\leq C \Big(\|u_0\|_{X_{1-\frac{1+\kappa}{p}, p}} + \|f\|_{\x(0,T)}\Big),
\end{equation}
where $C\ceqq 7\Gamma(M+1)C_0$ only depends on $C_{0}$, $b_{\scriptscriptstyle[0,T]}$,  $p$ and $r_1,\ldots,r_k$.   

{\em   Step 5 (regularity): } 

The estimate in the statement of Theorem \ref{th: well-posed linear} follows by combining  \eqref{eq: Lp est for fixed point} from Step 4 with an application of  Proposition \ref{prop: linearized well-posed} with $u=v$. 
\end{proof}

\begin{remark}\label{rem:Amappingchoice}
Regarding the mapping condition \eqref{eq:A mapping sum} in Assumption \ref{ass:linear A transference}, there is some flexibility in choosing the function space for $u$ without invalidating the results of Theorem \ref{th: well-posed linear}. Indeed, in  \eqref{eq:A mapping sum}, 
it suffices to estimate by $C_A\|u\|_{E(0,T)}$ for any normed space $E(0,T)$ such that $\MR_+(0,T)\into E(0,T)\into L^p(0,T,\wk;X_1)$, as  seen from the proof of Proposition \ref{prop: linearized well-posed}. However, choosing   the smallest space $E(0,T)=\MR_+(0,T)$ can lead to a constant $C_A$ that diverges as $T\downarrow 0$, which may be undesirable.  
Our current choice prevents this behavior and covers  virtually all cases of interest. 
\end{remark}

\begin{remark}\label{rem:kanook}
By Remark \ref{rem:reg},  the existence and uniqueness   in Theorem \ref{th: well-posed linear} also hold within the space  $\MR_{\rm tr}(0,T) = L^p(0,T,w_{\kappa};X_1)\cap C([0,T];X_{1-\frac{1+\kappa}{p},p})\cap C_{\nicefrac{\kappa}{p}}((0,T];X_{1-\frac{1}{p},p})$, 
for which we have $\MR_+(0,T)\hookrightarrow \MR_{\rm tr}(0,T)$ by Corollary \ref{cor: emb}.
Moreover, using a transference argument as in Section \ref{sec:borderline}, maximal $L^r$-regularity can be avoided  entirely, as long as there exists an operator  $A_0$ such that $-A_0$ generates an analytic $C_0$-semigroup on $X_0$ and $D(A_0) = X_1$. In that case, one can show that for $f\in L^{r}(0,T,\wn;X_{\gamma})$ with $(p,\kappa)$-admissible $(r,\nu,\gamma)$, 
\[\Big\|t\mapsto \textstyle{\int_0^t} e^{-(t-s)A_0} f(s)  \dd s\Big\|_{\MR_{\rm tr}(0,T)}\lesssim \|f\|_{L^{r}(0,T,\wn;X_{\gamma})}.\]
By the Da Prato--Grisvard theorem (see \cite[Corollary 17.3.20]{HNVWvolume3}), $A_{0}$ has maximal
$L^r$-regularity on $X_{\gamma,\infty}$. By weighted
extrapolation theorem for maximal regularity this gives that the solution is in 
$W^{1,r}(0,T,w_\nu;X_{\gamma,\infty})  \cap L^r(0,T,w_\nu;X_{1+\gamma,\infty})$. It remains to apply Proposition
\ref{prop: emb Lr} and the weighted trace embedding Lemma \ref{lem:trace}.
\end{remark}

\begin{remark}[Generalization to intermediate spaces]\label{rem:generalized_B}
Theorem \ref{th: well-posed linear} can be extended to perturbations $B_i\col [0,T]\to \calL(Y_{\beta_i},X_{\gamma_i})$ acting on intermediate spaces. For $i=1,\dots,k$, assume that $\beta_i \in (1-\frac{1+\kappa}{p}, 1]$ and $Y_{\beta_i}$ is a Banach space satisfying $X_{\beta_i, 1} \hookrightarrow Y_{\beta_i}$.  
We replace the integrability condition in Assumption \ref{ass: B linear} by  $\|B_i(\cdot)\|_{\calL(Y_{\beta_i}, X_{\gamma_i})} \leq b_i(\cdot)$, where $b_i \in L^{q_i}(0,T,w_{\mu_i})$ with parameters 
$$
q_i \ceqq \frac{pr_i}{p-r_i\theta_i}, \quad \mu_i \ceqq \frac{\nu_i p-\kappa r_i\theta_i}{p-r_i\theta_i}, \qquad \text{where }\, \theta_i \ceqq 1-\frac{(1-\beta_i)p}{1+\kappa},\quad p-r_i\theta_i>0.
$$ 
The well-posedness in $\MR_+(0,T)$ and estimates of Theorem \ref{th: well-posed linear} remain valid in this setting. 
The proof follows the same fixed-point strategy but relies on the auxiliary space $E_{p,\kappa}(0,\tau) \ceqq L^p(0,\tau,w_\kappa; X_1) \cap L^\infty(0,\tau; X_{1-\frac{1+\kappa}{p}, p})$. Crucially, using the reiteration theorem and Assumption \ref{ass:linear A transference}\textit{(\ref{it:A1})}, one has $X_{\beta_i, 1} = (X_{1-\frac{1+\kappa}{p}, p}, X_1)_{\theta_i, 1}$, which implies the following interpolation estimate:
$$\|B_i v\|_{L^{r_i}(0,\tau, w_{\nu_i}; X_{\gamma_i})} \leq C \|b_i\|_{L^{q_i}(0,\tau,w_{\mu_i})} \|v\|_{L^\infty(0,\tau; X_{1-\frac{1+\kappa}{p}, p})}^{1-\theta_i} \|v\|_{L^p(0,\tau, \wk; X_1)}^{\theta_i}, \quad v\in E_{p,\kappa}(0,\tau).$$  
This allows one to reprove  Proposition \ref{prop: linearized well-posed} for $v\in E_{p,\kappa}(0,\tau)$ (with $\|v\|_{E_{p,\kappa}(0,\tau)}$ in the estimate) and close the contraction arguments in the proof of Theorem \ref{th: well-posed linear} in the spaces $E_{p,\kappa}(0,\tau)$ (Step 1) and $E_{p,\kappa}(0,\tau_{n+1})$ (Step 3).
\end{remark} 

\subsection{Further embeddings of the mixed-scale maximal regularity space}

We recall that Theorem \ref{th: well-posed linear}  already provided continuity properties of solutions, which were derived from Corollary \ref{cor: emb}. In this subsection, we explore further regularity properties, in particular, fractional smoothness in time. 

We  establish useful embeddings for the space $\MR_+(0,T)$ (see \eqref{eq: def MR space}). Note that all embeddings in this subsection rely on an extension operator.
Consequently, the operator norm of each embedding depends on $T$ and tends to infinity as $T\downarrow 0$, unless the domain is restricted to functions in   $\MR_+(0,T)$ vanishing at $t=0$.

For $\delta\in(0,1)$ and a Banach space $X$, we define
\[
H^{\delta,p}(0,T,\wk;X)\ceqq   
  [L^{p}(0,T,\wk;X),W^{1,p}(0,T,\wk;X)]_\delta. 
\]

\begin{proposition}\label{prop: emb Lr Hdeltap}
  Let $p\in (1, \infty)$ and let $\kappa\in[0,p-1)$. 
Let $(r, \nu, \gamma)$ be $(p,\kappa)$-admissible. 
Then  for all $\delta\in[0,1-\gamma)$, it holds that
\begin{equation}\label{eq:emb to show Hdeltap}
\begin{split}
  L^r(0,T&,\wn;X_{1+\gamma})\cap W^{1,r}(0,T,\wn;X_{\gamma}) \into H^{\delta,p}(0,T,\wk;X_{1-\delta,1}). 
  \end{split}
\end{equation} 
\end{proposition}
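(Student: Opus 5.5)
The plan is to mimic the proof of Proposition \ref{prop: emb Lr}, now aiming at a target space with $\delta$ derivatives in time instead of zero. First, as there, compose with the extension operator $\mathrm{E}$ from \cite[Prop.\ L.4.5]{HNVWvolume3} (plus reflection), so that it suffices to work on $\R$ and restrict to $(0,T)$ at the end. The restriction step is justified by the fact that $H^{\delta,p}(\R,\wk;X)$ restricts boundedly onto $H^{\delta,p}(0,T,\wk;X)$: restriction is bounded from $L^p(\R,\wk;X)\to L^p(0,T,\wk;X)$ and from $W^{1,p}(\R,\wk;X)\to W^{1,p}(0,T,\wk;X)$, so by complex interpolation it is bounded between the $[\cdot,\cdot]_\delta$ spaces. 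On $\R$ (with $\kappa\in[0,p-1)$, an $A_p$-weight) the complex interpolation space coincides with the Bessel potential space, hence with $F^{\delta}_{p,2}(\R,\wk;X)$ in the UMD case. To avoid geometric assumptions on $X_{1-\delta,1}$, I would aim for the stronger target $F^{\delta}_{p,1}(\R,\wk;X_{1-\delta,1})\into H^{\delta,p}(\R,\wk;X_{1-\delta,1})$, which holds for general Banach spaces by the weighted analogue of \cite[Prop.\ 3.12]{MV12sharp}.

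The scheme of embeddings is then the same three steps as \eqref{eq:emb1}--\eqref{eq:emb3}. In the first step, \eqref{eq:emb1} is reused verbatim, giving $B^{s+1}_{\hat r\hat r}(\R,w_{\hat\nu};X_\gamma)\cap B^{s}_{\hat r\hat r}(\R,w_{\hat\nu};X_{1+\gamma})$ under \eqref{eq:emb1cond}. In the second step, the mixed-derivative embedding of \cite[Th.\ 3.1]{MV14traces} is applied with $\tilde\theta=1-\gamma-\delta$ instead of $1-\gamma$, so that the intermediate space is $(X_\gamma,X_{1+\gamma})_{\tilde\theta,1}=X_{1-\delta,1}$ by reiteration and \eqref{eq:Xgamma squeeze}. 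This gives an embedding into $B^{\hat s}_{\hat r\hat r}(\R,w_{\hat\nu};X_{1-\delta,1})$ with $\hat s=s+1-\tilde\theta=s+\gamma+\delta$. Here $\tilde\theta\in(0,1)$ needs exactly $\delta<1-\gamma$, which is the hypothesis; the case $\delta=0$ is Proposition \ref{prop: emb Lr} up to the harmless replacement $X_1$ by $X_{1,1}$, and it is covered by the same argument. In the third step, the weighted Sobolev embedding \cite[Th.\ 1.2]{MV12sharp} gives $B^{\hat s}_{\hat r\hat r}(w_{\hat\nu})\into F^{\delta}_{p,1}(\wk)$ under
\[
p\geq\hat r,\quad \tfrac{\kappa}{p}\leq\tfrac{\hat\nu}{\hat r},\quad \tfrac{1+\kappa}{p}<\tfrac{1+\hat\nu}{\hat r},\quad \hat s-\tfrac{1+\hat\nu}{\hat r}\geq \delta-\tfrac{1+\kappa}{p}.
\]

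The remaining task is the parameter bookkeeping, which is identical to \eqref{eq:embconds} except that the lower bound for $s$ becomes $s\geq -\gamma-\delta+\delta+\frac{1+\hat\nu}{\hat r}-\frac{1+\kappa}{p}$. The $\delta$ cancels, so the bounds on $s$ are unchanged. Solvability therefore again reduces to \eqref{eq:adm for emb}, which is exactly $(p,\kappa)$-admissibility. One chooses $\hat r,\hat\nu$ strictly between, with $\frac{1+\hat\nu}{\hat r}$ strictly inside $(\frac{1+\kappa}{p},\frac{1+\nu}{r})$, and then $s$ in the resulting nonempty interval.

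I expect the main obstacle to be the identification on the target side. One must check that the weighted $F^\delta_{p,1}$-space on $\R$ embeds into the complex interpolation space $[L^p,W^{1,p}]_\delta$ for arbitrary Banach $X$, without relying on UMD. I would try first to avoid the issue by a retraction/interpolation argument: show that $F^{\delta}_{p,1}$ sits in the real interpolation space $(L^p,W^{1,p})_{\delta,1}(\wk;X)$, which embeds into any exact interpolation space of order $\delta$, in particular the complex one. Secondary care is needed to verify the weight condition in \cite[Th.\ 3.1]{MV14traces} with equal weights, which is trivial here, and the restriction of $\mathrm{E}$ to the weighted setting, which was already used in Proposition \ref{prop: emb Lr}.
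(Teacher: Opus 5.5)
Your main line is the paper's proof. You extend to $\R$, reuse \eqref{eq:emb1} under \eqref{eq:emb1cond}, and apply \cite[Th.\ 3.1]{MV14traces} with $\tilde\theta=1-\gamma-\delta$ to reach $B^{s+\gamma+\delta}_{\hat r\hat r}(\R,w_{\hat\nu};X_{1-\delta,1})$. You finish with \cite[Th.\ 1.2, Prop.\ 3.12]{MV12sharp} via $F^\delta_{p,1}\hookrightarrow H^{\delta,p}$, and note that $\delta$ cancels in the lower bound for $s$, so solvability reduces again to \eqref{eq:adm for emb}.

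The one thing to correct is the fallback in your last paragraph. The paper does not discuss how the Bessel potential target of \cite[Prop.\ 3.12, (3.8)]{MV12sharp} relates to the complex-interpolation definition of $H^{\delta,p}(0,T,\wk;X)$; it simply cites (3.8). Your proposed workaround does not work as stated. $(L^p,W^{1,p})_{\delta,1}$ is the Besov space $B^\delta_{p,1}$, and for $p>1$ one only has $B^\delta_{p,1}\hookrightarrow F^\delta_{p,1}\hookrightarrow B^\delta_{p,p}$, so $F^\delta_{p,1}$ does not sit inside $(L^p,W^{1,p})_{\delta,1}$.

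You also cannot bypass this by overshooting to $B^{\delta+\varepsilon}_{p,\infty}\hookrightarrow B^\delta_{p,1}$. At the critical endpoint $\frac{1+\nu}{r}=\gamma+\frac{1+\kappa}{p}$, the case relevant for large deviations (cf.\ Remark \ref{rem:LDP}), the upper and lower bounds on $s$ coincide, so the chain has no slack.

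If you want to close this point for a non-UMD space $Y\ceqq X_{1-\delta,1}$, use the analytic family $F(z)=\sum_k 2^{k(\delta-z)}\varphi_k*f$ directly. The pointwise triangle inequality gives $\|F(it)\|_{L^p(\R,\wk;Y)}\le\|f\|_{F^\delta_{p,1}(\R,\wk;Y)}$. For the other endpoint, $\|F(1+it)\|_{W^{1,p}(\R,\wk;Y)}\lesssim\|f\|_{F^\delta_{p,1}(\R,\wk;Y)}$ follows from Peetre's maximal inequality for Banach-valued band-limited functions. Combine it with the weighted Fefferman--Stein inequality in $L^{p/a}(\ell^{1/a})$ for some $a<1$ close to $1$; note that $\wk\in A_p\subset A_{p/a}$.

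This gives $F^\delta_{p,1}(\R,\wk;Y)\hookrightarrow[L^p(\R,\wk;Y),W^{1,p}(\R,\wk;Y)]_\delta$ with no geometric assumption, and your restriction-by-interpolation argument then finishes. A minor slip: Bessel potential spaces coincide with $F^\delta_{p,2}$ only for Hilbert spaces, not for general UMD spaces.
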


\begin{proof}
To prove \eqref{eq:emb to show Hdeltap}, we use the embeddings of \eqref{eq:emb1}  for some $s\leq 0$, together with the following additional embeddings:
\begin{align}
&B_{\hat{r}\hat{r}}^{s+1}(\R,w_{\hat{\nu}};X_{\gamma})\cap B_{\hat{r}\hat{r}}^{s}(\R,w_{\hat{\nu}};X_{1+\gamma}) \into B_{\hat{r}\hat{r}}^{{{z}}}(\R,w_{\hat{\nu}};X_{1-\delta,1})
\label{eq:emb2 Hdeltap}\\[.25em]
&B_{\hat{r}\hat{r}}^{{{z}}}(\R,w_{\hat{\nu}};X_{1-\delta,1})\into  H^{\delta,p}(\R,\wk;X_{1-\delta,1})
\label{eq:emb3 Hdeltap}
\end{align}
Establishing these embeddings on $\R$ suffices, as we can apply a bounded extension operator $\mathrm{E}$ and restrict to $(0,T)$ subsequently, as in the proof of Proposition \ref{prop: emb Lr}. So it remains to show that there exist parameters $s,{{z}},\hat{r},\hat{\nu}$  such that the embeddings are valid.

To justify the use of \eqref{eq:emb1}, we impose the parameter conditions \eqref{eq:emb1cond} from the proof of Proposition \ref{prop: emb Lr}. Next, the embeddings of \eqref{eq:emb2 Hdeltap} and \eqref{eq:emb3 Hdeltap} will be proved analogously to those of \eqref{eq:emb2} and \eqref{eq:emb3}.   

For \eqref{eq:emb2 Hdeltap} we apply \cite[Th.\ 3.1]{MV14traces} with  $\tilde{\theta}=1-\gamma-\delta\in(0,1)$ 
and spaces  
$\tilde{X}_0=X_{\gamma}$, $\tilde{X}_1=X_{1+\gamma}$, $\tilde{X}_{\tilde{\theta}}=X_{1-\delta,1}$, 
and with all other parameters as indicated above \eqref{eq:emb2cond}.  These parameters satisfy the conditions of \cite[Th.\ 3.1]{MV14traces} if and only if 
 \begin{equation}\label{eq:emb2cond2}
s+\gamma+\delta={{z}}, 
\end{equation}  
see also the proof of Proposition \ref{prop: emb Lr}, and note that  the current choice of $\tilde{X}_{\tilde{\theta}}$ also satisfies \cite[(3.1)]{MV14traces}, since by \eqref{eq:Xgamma squeeze} and reiteration (see \cite[Th.\ L.3.1]{HNVWvolume3}):
\[
(X_{\gamma},X_{1+\gamma})_{\tilde{\theta},1}\into (X_{\gamma,\infty},X_{1+\gamma,\infty})_{\tilde{\theta},1}= X_{1-\delta,1}.
\] 

Finally, we turn to \eqref{eq:emb3 Hdeltap}. We use that $B_{rr}^s=F_{rr}^s$ and apply \cite[Prop.\ 3.12,  (3.8)]{MV12sharp} (for $F_{p,1}^\delta\into H^{\delta,p}$) and \cite[Th.\ 1.2]{MV12sharp}, imposing that  
 \begin{equation}\label{eq:emb3cond2}
p\geq \hat{r}, \quad
\frac{\kappa}{p}\leq \frac{\hat{\nu}}{\hat{r}},\quad \frac{1+\kappa}{p}<\frac{1+\hat{\nu}}{\hat{r}}, 
\quad {{z}}-\frac{1+\hat{\nu}}{\hat{r}}\geq \delta-\frac{1+\kappa}{p}.
\end{equation} 

Now,   conditions \eqref{eq:emb1cond}, \eqref{eq:emb2cond2} and \eqref{eq:emb3cond2} can be rewritten as (see also \eqref{eq:embconds}):
\begin{equation*} 
\begin{cases}
&p\geq \hat{r}\geq r, \\
& \frac{\kappa}{p}\leq \frac{\hat{\nu}}{\hat{r}}\leq \frac{\nu}{r}, \\
&\frac{1+\kappa}{p}<\frac{1+\hat{\nu}}{\hat{r}}<\frac{1+\nu}{r},  \\
&s\leq \frac{1+\hat{\nu}}{\hat{r}}-\frac{1+\nu}{r}, \\ 
& s\geq-\gamma+\frac{1+\hat{\nu}}{\hat{r}}  -\frac{1+\kappa}{p},\\
& s+\gamma+\delta={{z}}.
\end{cases}
\end{equation*}
By the reasoning at the end of the proof of Proposition \ref{prop: emb Lr}, we conclude that 
there exist $s,{{z}},\hat{r},\hat{\nu}$ satisfying the   conditions above if and only if 
\eqref{eq:adm for emb} holds, which corresponds to $(p,\kappa)$-admissibility of $(r,\nu,\gamma)$.  
\end{proof}

\begin{remark}\label{rem:Hdeltap alternative sp}
We note that $X_{1-\delta,1}$ embeds continuously into $X_{1-\delta}$ (by \eqref{eq:Xgamma squeeze}),  into $[X_0,X_{1+\gamma_*}]_{(1-\delta)/(1+\gamma_*)}$ (by \eqref{eq:def Xgammaq} and  Peetre's theorem \cite[Th.\ C.4.1]{HNVWvolume1}) and into $[X_0,X_1]_{1-\delta}$. In particular, in the right-hand side of the embedding \eqref{eq:emb to show Hdeltap}, we can replace $X_{1-\delta,1}$ by any of these spaces. 

For the embedding $X_{1-\delta,1}\into [X_0,X_1]_{1-\delta}$, if $\delta\in(0,1)$, we use subsequently \eqref{eq:def Xgammaq}, reiteration \cite[Th.\ L.3.1]{HNVWvolume3},  \eqref{eq:def Xgammaq}, \eqref{eq:Xgamma squeeze} and Peetre's theorem \cite[Th.\ C.4.1]{HNVWvolume1}:
   \begin{align*}
   X_{1-\delta,1}=(X_0,X_{1+\gamma_*})_{\frac{1-\delta}{1+\gamma_*},1}&=(X_0,(X_0,X_{1+\gamma_*})_{\frac{1}{{1+\gamma_*}},1})_{{1-\delta},1}
   \\&=(X_0,X_{1,1})_{1-\delta,1}\into (X_0,X_{1})_{1-\delta,1}\into [X_0,X_{1}]_{1-\delta},
   \end{align*} 
and if $\delta=0$, we use Assumption \ref{ass:linear A transference}\textit{(\ref{it:A1})}.
\end{remark}

Next, in what follows, we will be assuming that $X_0$ and $X_1$ are UMD spaces (see \cite[\S4.2]{HNVWvolume1}). 
Under the UMD assumption, we have thanks to \cite[Th.\ 3.18]{LV20} and \cite[Lem.\ 5.4]{LMV18} (for the domain $(0,T)$),  for all $\delta\in[0,1]$:
\begin{equation}\label{eq:Hdp interpol}
  H^{\delta,p}(0,T,\wk;[X_0,X_1]_{1-\delta})\cong   [L^{p}(0,T,\wk;X_1),W^{1,p}(0,T,\wk;X_0)]_\delta. 
\end{equation} 

\begin{corollary} \label{cor: emb UMD} 
Let $p\in (1, \infty)$ and let $\kappa\in[0,p-1)$. Suppose that $X_0$ and $X_1$ are UMD spaces. 
Let $(r_i, \nu_i, \gamma_i)$ be $(p,\kappa)$-admissible for $1\leq i \leq k$. 
Define $\delta_*=\min\{1-\gamma_i:1\leq i\leq k\}$. 

For all $\delta\in[0,\delta_*)$, it holds that
\begin{align*} 
  \MR_+(0,T)&\into  H^{\delta,p}(0,T,\wk;[X_0,X_1]_{1-\delta}).
  \end{align*}
\end{corollary}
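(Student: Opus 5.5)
Since $\MR_+(0,T)$ is a sum space equipped with the sum norm \eqref{eq:def sum norm}, it suffices to show that each summand embeds continuously into $H^{\delta,p}(0,T,\wk;[X_0,X_1]_{1-\delta})$. For $u = u_0 + \sum_{i=1}^k u_i$, with $u_0$ in the $L^p\cap W^{1,p}$-part and $u_i$ in the $i$-th $L^{r_i}\cap W^{1,r_i}$-part, we then get
\[
\|u\|_{H^{\delta,p}} \leq \sum_{i=0}^k \|u_i\|_{H^{\delta,p}} \lesssim \sum_{i=0}^k \|u_i\|,
\]
and taking the infimum over all decompositions yields the claim. The ambient spaces are compatible, since all summands and the target space sit inside $L^1(0,T;X_0)$.

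\emph{The $L^p\cap W^{1,p}$-part.} Under the UMD assumption, \eqref{eq:Hdp interpol} identifies $[L^p(0,T,\wk;X_1),W^{1,p}(0,T,\wk;X_0)]_\delta$ with $H^{\delta,p}(0,T,\wk;[X_0,X_1]_{1-\delta})$. The intersection $L^p(0,T,\wk;X_1)\cap W^{1,p}(0,T,\wk;X_0)$ embeds contractively into every complex interpolation space of the couple $(L^p(0,T,\wk;X_1),\,W^{1,p}(0,T,\wk;X_0))$. This is the standard inclusion $Y_0\cap Y_1\hookrightarrow [Y_0,Y_1]_\delta$, obtained from the constant function $F(z)=u$. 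Note that the case $\delta=0$ or $\delta=1$ is trivial.

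\emph{The $L^{r_i}\cap W^{1,r_i}$-parts.} For each $i$ and $\delta\in[0,\delta_*)$ we have $\delta<1-\gamma_i$. Proposition \ref{prop: emb Lr Hdeltap} therefore gives
\[
L^{r_i}(0,T,w_{\nu_i};X_{1+\gamma_i})\cap W^{1,r_i}(0,T,w_{\nu_i};X_{\gamma_i})\hookrightarrow H^{\delta,p}(0,T,\wk;X_{1-\delta,1}).
\]
Remark \ref{rem:Hdeltap alternative sp} shows that $X_{1-\delta,1}\hookrightarrow[X_0,X_1]_{1-\delta}$, including the case $\delta=0$. It remains to pass this space embedding through $H^{\delta,p}$. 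Here $H^{\delta,p}(0,T,\wk;X)$ is defined as the complex interpolation space $[L^p(0,T,\wk;X),W^{1,p}(0,T,\wk;X)]_\delta$. A bounded operator $J\colon X\to Y$ acts pointwise as a bounded map on both endpoint spaces, so by the interpolation property it induces a bounded map $H^{\delta,p}(0,T,\wk;X)\to H^{\delta,p}(0,T,\wk;Y)$. Hence each such part embeds into $H^{\delta,p}(0,T,\wk;[X_0,X_1]_{1-\delta})$.

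\emph{Expected obstacle.} The main point to check is the consistency of definitions. The target space from the second step is defined as $[L^p(\cdot;Y),W^{1,p}(\cdot;Y)]_\delta$ with $Y=[X_0,X_1]_{1-\delta}$. The first step instead uses the mixed-couple identification \eqref{eq:Hdp interpol}, which is exactly where UMD is needed. So I must read \eqref{eq:Hdp interpol} as the statement that the mixed interpolation space coincides with $H^{\delta,p}(0,T,\wk;Y)$ in the sense defined above. Only then do both steps land in the same space with equivalent norms, and the sum-space argument closes.
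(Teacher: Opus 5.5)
Your proposal is correct and follows the paper's proof. The $L^p\cap W^{1,p}$-summand is handled by \eqref{eq:Hdp interpol} together with $Y_0\cap Y_1\hookrightarrow[Y_0,Y_1]_\delta$, and the $L^{r_i}\cap W^{1,r_i}$-summands by Proposition \ref{prop: emb Lr Hdeltap} combined with Remark \ref{rem:Hdeltap alternative sp}. Your sum-norm argument, and your interpolation argument lifting $X_{1-\delta,1}\hookrightarrow[X_0,X_1]_{1-\delta}$ to the $H^{\delta,p}$-spaces, are exactly the details the paper leaves implicit.
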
 
\begin{proof} 
Observe that thanks to \eqref{eq:Hdp interpol}, for all $\delta\in[0,1]$, 
\begin{equation}\label{eq: interpol identity UMD}
  L^p(0,T,\wk; X_1)\cap W^{1,p}(0,T,\wk;X_0)\into  H^{\delta,p}(0,T,\wk;[X_0,X_1]_{1-\delta}). 
\end{equation}  
  Recalling \eqref{eq: def MR space}, Proposition \ref{prop: emb Lr Hdeltap} and Remark \ref{rem:Hdeltap alternative sp} conclude the proof.  
\end{proof}

\begin{remark}\label{rem:complex emb UMD} 
If  $X_\gamma$ is chosen as $[X_0,X_{1+\gamma_*}]_{\frac{\gamma}{1+\gamma_*}}$, then we can also include the borderline case $\delta=\delta_*$ in Corollary \ref{cor: emb UMD}. 
  
Note that \eqref{eq: interpol identity UMD} already held for all $\delta\in[0,1]$, and $X_\gamma=[X_0,X_1]_{\gamma}$ for $\gamma\in[0,1]$ by reiteration,  so it suffices to show  that   $L^r(0,T,w_{\nu};X_{1+\gamma})\cap W^{1,r}(0,T,w_{\nu};X_{\gamma}) \into H^{1-\gamma,p}(0,T,\wk;X_{\gamma})$, whenever $(r,\nu,\gamma)$ is $(p,\kappa)$-admissible. 
  Now, by \cite[Th.\ 1.2, Prop.\ 3.12]{MV12sharp}, the stronger embedding $W^{1,r}(0,T,w_{\nu};X_{\gamma}) \into H^{1-\gamma,p}(0,T,\wk;X_{\gamma})$ even holds, provided that $r\leq p$, $\frac{\kappa}{p}\leq \frac{\nu}{r}$, $ \frac{1+\kappa}{p}<\frac{1+\nu}{r}$ and $\gamma-\frac{1+\nu}{r}\geq 0-\frac{1+\kappa}{p}$, which are  satisfied thanks to the assumed $(p,\kappa)$-admissibility. 
\end{remark}

\begin{remark}
If $X_0$ and $X_{1+\gamma_*}$ in Assumption \ref{ass:linear A transference} are UMD spaces  and   $X_\gamma= [X_0,X_{1+\gamma_*}]_{\gamma/(1+\gamma_*)}$, then $X_\gamma$ and $X_{1+\gamma}$ are also UMD, and Proposition \ref{prop: emb Lr} admits a shorter proof via \eqref{eq:Hdp interpol}:
\begin{align*}
  L^r(0,T,w_{\nu};X_{1+\gamma})\cap W^{1,r}(0,T,w_{\nu};X_{\gamma}) &\into [L^r(0,T,w_{\nu};X_{1+\gamma}), W^{1,r}(0,T,w_{\nu};X_{\gamma})]_\gamma\\
  &=H^{\gamma,r}(0,T,\wn;X_{1})\\
  &\into L^p(0,T,\wk;X_{1}),
\end{align*}
where the last embedding follows from \cite[Th.\ 1.2, Prop.\ 3.12]{MV12sharp}, using $(p,\kappa)$-admissibility, and we use  reiteration for complex interpolation \cite[\S1.10.3 Th.\ 2]{Tri78} in the second line. 
\end{remark}

\section{Perturbations beyond maximal $L^p$-regularity: the  case $r=1$}\label{sec:borderline}

In this section, we present a version of Theorem \ref{th: well-posed linear} for the limiting case $r=1$. The latter necessitates setting $\nu=0$ (to ensure integrability of $f$), hence $\kappa=0$, due to the admissibility condition $\kappa/p\leq \nu/r$. 
Standard maximal $L^1$-regularity is often an unsuitable tool, notably failing on reflexive Banach spaces (see \cite[Cor.\ 17.4.5]{HNVWvolume3}). Consequently, the maximal $L^r$-regularity requirement in Assumption \ref{ass:linear A transference} becomes problematic, rendering  Section \ref{sec:linear adm} inapplicable. 

Instead, we establish estimates directly, using a modified solution space to replace $\MR_+(s,t)$:
\begin{align}
E_p(s,t)& \ceqq L^p(s,t;X_1)\cap C([s,t];X_{1-\nicefrac1p,p}), \label{eq:DefEp}
\\ F_p(s,t)&\ceqq L^p(s,t;X_0) + L^1(s,t;(X_0,X_1)_{1-\nicefrac1p,p}).\label{eq:DefFp}
\end{align}

In the current section we treat $p\in (1, \infty)$ and set $\gamma = 1-\nicefrac{1}{p}$ (cf.\ Definition \ref{def:admissible}). Furthermore, the perturbation $B(t)$ maps from $X_1$ to $(X_0,X_1)_{1-\nicefrac{1}{p},p}$. 

\subsection{Main result}

The following assumption will be sufficient for our main result. 

\begin{assumption}\label{assum:HScase}
Assumption \ref{assump:XA} holds and $p\in (1, \infty)$. Moreover: 
\begin{enumerate}[\rm (1)]
\item $A\col[0,T]\to \calL(X_1, X_0)$ has maximal $L^p(0,T;X_0)$-regularity and
\begin{equation}\label{eq:A mapping r=1}
  \|Au\|_{L^p(0,T;X_0)}\leq C_A\|u\|_{E_p(0,T)}, \qquad u\in E_p(0,T)  
\end{equation} 
for a constant $C_A$. 
\item\label{it2:HScase} There exists a closed operator $A_0$ on $X_0$ such that $-A_0$ generates an analytic $C_0$-semigroup on $X_0$ and $D(A_0) = X_1$.  
\item\label{it3:HScase}   $B\col [0,T]\to \calL(X_1,(X_0,X_1)_{1-\nicefrac1p,p})$ is   strongly measurable in the strong operator topology, and  
$\|B(\cdot)\|_{\calL(X_1,(X_0,X_1)_{1-\nicefrac1p,p})}\leq b(\cdot)$ for some $b\in L^{p'}(0,T)$. 
\end{enumerate}
\end{assumption}
As below Assumption \ref{ass:linear A transference} one can check that \eqref{eq:A mapping r=1} extends to all subintervals. 

As in Section \ref{sec:linear adm}, it would be possible to consider $\sum_{i=1}^k B_i$, where each  $B_i$ satisfies the condition of Assumption \ref{ass: B linear} or Assumption \ref{assum:HScase}\eqref{it3:HScase} (see Remark \ref{rem:kanook}).

When $X_0$ is a Hilbert space, Assumption \ref{assum:HScase} simplifies considerably:
\begin{remark}\label{rem:nonvar}
Suppose that $X_0$ is a Hilbert space. Then the following assertions hold: 
\begin{enumerate}
\item If $A$ is time-independent and $X_1=D(A)$, the assumption on $A$ is equivalent to $-A$ being the generator of an analytic $C_0$-semigroup by De Simon's theorem. (see \cite[Cor.\ 17.3.8]{HNVWvolume3}). In particular, we can take $A_0 = A$. 
\item Assumption \ref{assum:HScase}(\ref{it2:HScase}) holds automatically if $X_1$ is also a Hilbert space. 
    Indeed, the space
$X_0$ can be identified with $X_1^*$ using the duality pairing $\lb\cdot,\cdot\rb$ between $X_0$ and $X_1$  induced by $[X_0,X_1]_{1/2}$ \cite[\S5.5.2]{arendt02}. Consequently, we can define $A_0\in\mathcal{L}(X_1,X_0)$ by  $\lb A_0x,y\rb\ceqq (x,y)_{X_1}$, which in particular satisfies $\lb A_0 x,x\rb=\|x\|_{X_1}^2$.  Then $A_0$ is invertible, positive,  and self-adjoint. In particular, $-A_0$ generates an analytic $C_0$-semigroup. 
\item If $p=2$, then the real interpolation space $(X_0,X_1)_{1-\nicefrac1p,p}$ coincides with the complex one (see \cite[Corollary C.4.2]{HNVWvolume1}). Therefore, $E_2(s,t) = L^2(s,t;X_1)\cap C([s,t];[X_0, X_{1}]_{\nicefrac12})$.
\end{enumerate}
\end{remark}

Our main perturbation result for $r=1$ can now be stated as follows. 
\begin{theorem}\label{thm:mainHilbert}
Suppose that Assumption \ref{assum:HScase} holds and let $E_p(0,T)$ and $F_p(0,T)$ be defined as in \eqref{eq:DefEp}  and  \eqref{eq:DefFp}. Then for all $u_0\in (X_0,X_1)_{1-\nicefrac1p,p}$  and $f\in F_p(0,T)$,  the problem 
\begin{equation*}
\left\{
\begin{aligned}
    u' + (A+B)u &= f, \\
    u(0) &= u_0
\end{aligned}
\right.
\end{equation*}
 has a unique strong solution $u\in E_p(0,T)$ and
\begin{align*}
\|u\|_{E_p(0,T)}  \leq C \big(\|u_0\|_{(X_0,X_1)_{1-\nicefrac1p,p}} +  \|f\|_{F_p(0,T)}\big)   
\end{align*} 
where  $C$ is a   constant independent of $u_0$ and $f$, which depends only on $A_0,X_0,X_1,p$,   and nondecreasingly on $\|b\|_{L^{p'}(0,T)},T,C_A$, and $M_{p,0,A}(0,T)$.
\end{theorem}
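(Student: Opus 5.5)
The plan is to follow the scheme of the proof of Theorem \ref{th: well-posed linear}, with $\MR_+(s,t)$ replaced by $E_p(s,t)$ and $\x(s,t)$ replaced by $F_p(s,t)$. The only genuinely new ingredient is a substitute for Lemma \ref{lem:Sf Lr MR transference}(ii), since maximal $L^1$-regularity is unavailable. I would instead prove the following claim.

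\emph{Claim.} For $g\in L^1(0,\tau;X_{1-\nicefrac1p,p})$, the problem $u'+Au=g$, $u(0)=0$, has a strong solution $u\in E_p(0,\tau)$ with
\[
\|u\|_{E_p(0,\tau)}\le K\|g\|_{L^1(0,\tau;X_{1-\nicefrac1p,p})},
\]
where $K$ does not depend on $\tau\le T$.

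I would prove the claim by transference through $A_0$ (as in Remark \ref{rem:kanook}). Set $v(t)\ceqq\int_0^t e^{-(t-s)A_0}g(s)\dd s$.
\begin{enumerate}[(a)]
\item Since $-A_0$ generates an analytic semigroup with $D(A_0)=X_1$, the map $x\mapsto e^{-tA_0}x$ sends $X_{1-\nicefrac1p,p}$ into $L^p(0,\infty;X_1)\cap C_b([0,\infty);X_{1-\nicefrac1p,p})$, with norm bounded for finite $T$ after a shift. This is the trace characterization of the real interpolation space via analytic semigroups.
\item By Minkowski's integral inequality, $\|v\|_{E_p(0,\tau)}\le K_0\|g\|_{L^1(0,\tau;X_{1-\nicefrac1p,p})}$.
\item The function $v$ is a mild solution of $v'+A_0v=g$. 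Because it lies in $L^p(0,\tau;X_1)$, it is in fact a strong solution in $W^{1,1}(0,\tau;X_0)$.
\item Let $w$ be the maximal $L^p$-regularity solution of $w'+Aw=(A_0-A)v$, $w(0)=0$. The right-hand side lies in $L^p(0,\tau;X_0)$ with norm $\lesssim(\|A_0\|+C_A)\|v\|_{E_p}$, by \eqref{eq:A mapping r=1} on subintervals.
\item Put $u=v+w$. The trace embedding \eqref{eq:traceembedding0T} for $w(0)=0$ gives $\|w\|_{E_p(0,\tau)}\lesssim M_{p,0,A}(0,T)\,\|(A_0-A)v\|_{L^p}$ with $\tau$-independent constants.
\end{enumerate}

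With the claim in hand, I would prove an analogue of Proposition \ref{prop: linearized well-posed}. For $v\in L^p(0,\tau;X_1)$, the equation $u'+Au+Bv=f$, $u(0)=u_0$, has a unique strong solution $u\in E_p(0,\tau)$ satisfying
\[
\|u\|_{E_p}\le C_0\bigl(\|u_0\|+\|b\|_{L^{p'}(0,\tau)}\|v\|_{L^p(0,\tau;X_1)}+\|f\|_{F_p(0,\tau)}\bigr).
\]
The solution is built as a sum of four pieces:
\begin{itemize}
\item[--] the initial-value part, from Lemma \ref{lem:initial mixed scale} with $\kappa=0$, whose hypotheses follow from \eqref{eq:A mapping r=1};
\item[--] the $L^p(X_0)$-part of $f$, handled by maximal regularity and \eqref{eq:traceembedding0T};
\item[--] the $L^1(X_{1-\nicefrac1p,p})$-part of $f$;
\item[--] the term $-Bv$.
\end{itemize}
The last two pieces are handled by the claim. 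For $-Bv$ I use H\"older's inequality, $\|Bv\|_{L^1(X_{1-\nicefrac1p,p})}\le\|b\|_{L^{p'}}\|v\|_{L^p(X_1)}$. Taking the infimum over decompositions of $f$ yields the $F_p$-norm. Uniqueness in $E_p(0,\tau)$ goes as follows: the difference $w$ of two solutions solves $w'=-Aw\in L^p(X_0)$ by \eqref{eq:A mapping r=1}, so $w\in\MR^p$, and \eqref{eq:MRest} gives $w=0$.

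Existence and the global estimate then follow verbatim from Steps 1--5 of the proof of Theorem \ref{th: well-posed linear}.
\begin{itemize}
\item[--] Run a contraction in $L^p(0,\tau;X_1)$ on intervals where $\|b\|_{L^{p'}(\tau_{n-1},\tau_n)}=1/(2C_0)$.
\item[--] Glue the pieces using the $C([0,T];X_{1-\nicefrac1p,p})$-continuity to match initial values. On the interval $[\tau_n,\tau_{n+1}]$ the problem is formulated on $(0,\tau_{n+1})$ with zero initial value, so that the constants stay uniform.
\item[--] Obtain the recursive bound $(1+n)$ as before, giving a factor $N!$.
\item[--] Bound the number of pieces by $N\le 2+(2C_0\|b\|_{L^{p'}(0,T)})^{p'}$.
\end{itemize}

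The main obstacle is the claim, specifically steps (a) and (c). For (a) I need the $L^p(X_1)$ and uniform $X_{1-\nicefrac1p,p}$ bounds for the semigroup orbit from $X_{1-\nicefrac1p,p}$, and possibly a shift $A_0+\mu$ to make the constant $\tau$-independent. For (c) I need to verify that the convolution is a genuine strong solution, so that the identification with $A$ via maximal regularity is legitimate.
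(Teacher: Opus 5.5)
Your proposal is correct and is essentially the paper's argument. Your Claim together with the four-piece construction corresponds to Lemma \ref{lem:L2real} plus the transference through $A_0$ in Proposition \ref{prop:HS}, and your globalization is the paper's first proof, which reruns Steps 1--5 of Theorem \ref{th: well-posed linear} with $E_p$, $F_p$ in place of $\MR_+$, $\x$. The only cosmetic difference is that the paper sends $u_0$ together with $g-Bv$ through the $A_0$-problem in one go, whereas you treat $u_0$ separately via Lemma \ref{lem:initial mixed scale} with $\kappa=0$.
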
 

Even in the unperturbed case ($B=0$), it is noteworthy that the space of inhomogeneities can be enlarged to $F_p(0,T)$ while preserving the regularity $u\in L^p(0,T;X_1)$. 
Also, in case $X_0$ is a Hilbert space and $p=2$, Theorem \ref{thm:mainHilbert} extends \cite[Cor.\ 3.5]{TV24} to the nonvariational setting (see Remark \ref{rem:nonvar}). 

\subsection{Proof of Theorem \ref{thm:mainHilbert}}
We start with a general semigroup lemma. We thank Emiel Lorist for suggesting the elementary proof provided below.
\begin{lemma}\label{lem:L2real} 
Let $X_0$ and $X_1$ be Banach spaces and let $p\in (1, \infty)$. Let $A_0$ be a closed operator on $X_0$ such that $-A_0$ generates an analytic $C_0$-semigroup with $X_1 = D(A_0)$. Then 
for every $g\in L^1(0,T;X_{1-\nicefrac1p,p})$ and $u_0\in (X_0, X_1)_{1-\nicefrac1p,p}$ the problem
\begin{equation*}
\left\{
\begin{aligned}
    u' + A_0 u &= g, \\
    u(0) &= u_0.
\end{aligned}
\right.
\end{equation*}
has a unique strong solution $u\in E_p(0,T)$ and
\begin{align*}
\|u\|_{E_p(0,T)}\leq C\|u_0\|_{X_{1-\nicefrac1p,p}} + C \|g\|_{L^1(0,T;X_{1-\nicefrac1p,p})},
\end{align*}
where $C$ is a constant independent of $u_0$ and $g$, and depends only on $A_0,X_0,X_1,p$,   and nondecreasingly on $T$.  
\end{lemma}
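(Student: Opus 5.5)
The solution will be given by the variation-of-constants formula, and the proof reduces to two estimates for the semigroup $S(t)\ceqq e^{-tA_0}$. Put
\[
u(t) \ceqq S(t)u_0 + \int_0^t S(t-s)g(s)\dd s .
\]
Uniqueness is immediate: the difference $w$ of two strong solutions in $E_p(0,T)$ solves $w'+A_0w=0$ with $w(0)=0$ and $w\in L^1(0,T;X_1)\cap W^{1,1}(0,T;X_0)$. Pairing with the adjoint semigroup shows that $s\mapsto S(t-s)w(s)$ is constant, hence $w=0$. For the initial value part I will use the standard semigroup characterization of the real interpolation space: $x\in X_{1-\nicefrac1p,p}$ if and only if $t\mapsto A_0S(t)x$ belongs to $L^p(0,T;X_0)$, with equivalent norms (see \cite[\S1.14]{Tri78} or \cite{Lun}; on a finite interval no invertibility of $A_0$ is needed, and shifting $A_0$ by a constant is harmless). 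Together with the uniform boundedness of $S(t)$ on $X_{1-\nicefrac1p,p}$, which follows by interpolating boundedness on $X_0$ and on $X_1$, this gives
\[
\|S(\cdot)u_0\|_{L^p(0,T;X_1)} + \|S(\cdot)u_0\|_{C([0,T];X_{1-\nicefrac1p,p})} \le C\|u_0\|_{X_{1-\nicefrac1p,p}} .
\]
Continuity in time holds because $S$ is strongly continuous on the real interpolation space with $p<\infty$, as $X_1$ is dense in it.

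For the inhomogeneous part, Minkowski's integral inequality gives
\[
\Big\|\int_0^t S(t-s)g(s)\dd s\Big\|_{L^p(0,T;X_1)} \le \int_0^T \big\|\one_{(s,T)}(\cdot)\,S(\cdot-s)g(s)\big\|_{L^p(0,T;X_1)}\dd s \le C\int_0^T \|g(s)\|_{X_{1-\nicefrac1p,p}}\dd s .
\]
The last inequality applies the initial-value estimate to $x=g(s)$, after the shift $t\mapsto t-s$. This is the elementary trick: the $L^1$-in-time integrability is absorbed by Minkowski, and each time slice is a homogeneous problem. In the same way,
\[
\sup_t \Big\|\int_0^t S(t-s)g(s)\dd s\Big\|_{X_{1-\nicefrac1p,p}} \le C\|g\|_{L^1(0,T;X_{1-\nicefrac1p,p})} .
\]
Continuity in $t$ follows by dominated convergence, using strong continuity of $S$ on $X_{1-\nicefrac1p,p}$.

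It remains to check that $u$ is a strong solution. For $g$ in a dense class, say step functions with values in $X_1$, $u$ is a classical mild solution and the identity $u(t)=u_0-\int_0^tA_0u\dd s+\int_0^tg\dd s$ holds. By the estimates above, the map $g\mapsto u$ is continuous from $L^1(0,T;X_{1-\nicefrac1p,p})$ into $L^p(0,T;X_1)\hookrightarrow L^1(0,T;X_1)$. Hence $A_0u\in L^1(0,T;X_0)$, and the integral identity passes to the limit, giving $u\in W^{1,1}(0,T;X_0)$.

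The main point to verify carefully is that the constant in the semigroup characterization is uniform and nondecreasing in $T$. This is clear because $\int_0^T\|A_0S(t)x\|^p\dd t$ is monotone in $T$.
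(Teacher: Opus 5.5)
Your proposal is correct and follows essentially the same route as the paper. Both use the mild-solution formula, Minkowski's integral inequality to reduce the $L^1$-in-time inhomogeneity to slice-wise homogeneous estimates, the semigroup characterization of $X_{1-\nicefrac1p,p}$ (the paper cites \cite[Th.\ L.2.4]{HNVWvolume3}) for the $L^p(0,T;X_1)$ bound, and the restricted semigroup on $X_{1-\nicefrac1p,p}$ for continuity. The differences are only cosmetic: the paper shifts $A_0$ to be exponentially stable and works on $(0,\infty)$, and it cites uniqueness of weak solutions and \cite[Prop.\ 17.1.3]{HNVWvolume3} where you use the adjoint pairing and a density argument.
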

\begin{proof}
Uniqueness follows from the fact that weak solutions are unique \cite[Th.\ G.3.2]{HNVWvolume2}. 

Before we proceed with the existence and estimate, observe that by 
applying the invertible transformation $u\mapsto e^{-\lambda\cdot} u$, we have an equivalence of the problem above to the same problem with $A_0$  replaced by $\lambda+A_0$.
Therefore, without loss of generality, we may assume that $A_0$  is sectorial of angle $<\pi/2$ and invertible. In particular, $e^{-tA_0}$ is exponentially stable. 

Extend $g$ by zero to a function on $(0,\infty)$. 
By \cite[Th.\ G.3.2]{HNVWvolume2}, there exists a weak solution $u$ which can be represented as a mild solution:
\begin{align*}
u(t) = \exp(-tA_0)u_0 +  \int_0^t \exp(-(t-s)A_0) g(s) ds \eqqc \exp(-tA_0)u_0 + v(t), \ \ t\geq 0.
\end{align*}
It suffices to prove the bounds for $T=\infty$. 

The $C([0,\infty);X_{1-\nicefrac1p,p})$-regularity and the corresponding estimate follow from the fact that $t\mapsto e^{-t A_0}$ restricts to an exponentially stable analytic $C_0$-semigroup on $X_{1-\nicefrac1p,p}$. 
It is also standard that $\|t\mapsto \exp(-tA_0)u_0\|_{L^p(\R_+;X_1)}\leq C\|u_0\|_{X_{1-\nicefrac1p,p}}$ (see \cite[Th.\ L.2.4]{HNVWvolume3}).

Next, we prove $L^p(0,\infty;X_1)$-regularity of the mild solution $u$ given above. We have 
\begin{align*}
\|v\|_{L^p(0,\infty;X_1)} & \eqsim \Big\|t\mapsto \int_0^\infty A_0 \exp(-(t-s)A_0) \one_{(0,t)}(s) g(s) ds \Big\|_{L^p(0,\infty;X_0)}
\\& \lesssim \int_0^\infty \|t\mapsto A_0 \exp(-(t-s)A_0) \one_{(0,t)}(s) g(s)\|_{L^p(0,\infty;X_0)} ds 
\\& = \int_0^\infty \|t\mapsto A_0 \exp(-t A_0)  g(s)\|_{L^p(0,\infty;X_0)} ds 
\\& \eqsim \|g\|_{L^1(0,\infty;X_{1-\nicefrac{1}{p},p})}, 
\end{align*}
where in the last step we used \cite[Th.\ L.2.4]{HNVWvolume3}.  

Finally, note that by \cite[Prop.\ 17.1.3]{HNVWvolume3} and the previously obtained regularity, the mild solution $u$ is a strong solution as well. 
\end{proof}

Next, we extend this bound to the non-autonomous case. 

\begin{proposition}\label{prop:HS}
 Let Assumption \ref{assum:HScase} be satisfied.  
Then for every $\tau\in(0,T]$, $u_0\in (X_0,X_1)_{1-\nicefrac1p,p}$, $f\in F_p(0,\tau)$ and $v\in L^p(0,\tau;X_1)$, the problem 
\begin{equation*}
\left\{
\begin{aligned}
    u' + A u +Bv &= f, \\
    u(0) &= u_0
\end{aligned}
\right.
\end{equation*}
has a unique strong solution $u\in E_p(0,\tau)$ on $[0,\tau]$, and 
\[\|u\|_{E_p(0,\tau)} \leq C_0 \Big(\|u_0\|_{(X_0,X_1)_{1-\nicefrac1p,p}}+\|b\|_{L^{p'}(0,\tau)}\|v\|_{L^p(0,\tau;X_1)}+\|f\|_{F_p(0,\tau)}\Big),
\]
where  $C_0$ is a constant independent of $\tau, u_0, f$, and $v$, which  depends only on $A_0,X_0,X_1,p$,  and nondecreasingly on $T,C_A$, and $M_{p,0,A}(0,T)$.    
\end{proposition}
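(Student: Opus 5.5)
We mirror the proof of Proposition \ref{prop: linearized well-posed}, but use Lemma \ref{lem:L2real} with $A_0$ as a ``transference'' device in place of maximal $L^1$-regularity. Write $X_{1-\nicefrac1p,p}=(X_0,X_1)_{1-\nicefrac1p,p}$.

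\textbf{Splitting the data.} Fix $\tau\in(0,T]$ and an arbitrary decomposition $f=f_0+f_1$ with $f_0\in L^p(0,\tau;X_0)$ and $f_1\in L^1(0,\tau;X_{1-\nicefrac1p,p})$. Put $g\ceqq f_1-Bv$. By Assumption \ref{assum:HScase}(\ref{it3:HScase}) and H\"older's inequality,
\[
\|Bv\|_{L^1(0,\tau;X_{1-\nicefrac1p,p})}\leq \|b\|_{L^{p'}(0,\tau)}\|v\|_{L^p(0,\tau;X_1)},
\]
so $g\in L^1(0,\tau;X_{1-\nicefrac1p,p})$.

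\textbf{Building the solution.} The solution is assembled from three pieces.
\begin{enumerate}
\item Let $\tilde u$ solve $\tilde u'+A\tilde u=f_0$ with $\tilde u(0)=u_0$. It exists in $\MR^p(0,\tau)$ by Lemma \ref{lem:initial mixed scale} (with $\kappa=0$) together with Lemma \ref{lem:restriction}. The traces give $\MR^p(0,\tau)\hookrightarrow E_p(0,\tau)$ with a $\tau$-uniform constant on the part vanishing at $0$. This yields $\|\tilde u\|_{E_p}\lesssim \|u_0\|+\|f_0\|_{L^p}$.
\item Let $z\in E_p(0,\tau)$ solve $z'+A_0z=g$ with $z(0)=0$. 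It exists by Lemma \ref{lem:L2real}, with $\|z\|_{E_p(0,\tau)}\leq C\|g\|_{L^1(0,\tau;X_{1-\nicefrac1p,p})}$. The constant is nondecreasing in $T$, hence uniform in $\tau\le T$.
\item Correct the operator mismatch with $w\in\MR^p(0,\tau)$, $w(0)=0$, solving $w'+Aw=(A_0-A)z$. The right-hand side is controlled by
\[
\|(A_0-A)z\|_{L^p(0,\tau;X_0)}\leq (\|A_0\|_{\calL(X_1,X_0)}+C_A)\|z\|_{E_p(0,\tau)}.
\]
Here we use \eqref{eq:A mapping r=1} on subintervals (noted after Assumption \ref{assum:HScase}) and $D(A_0)=X_1$. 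Maximal regularity with constant $M_{p,0,A}(0,T)$ (Lemma \ref{lem:restriction}) and the $\tau$-uniform trace embedding for functions vanishing at $0$ then bound $\|w\|_{E_p}$ by a multiple of $\|z\|_{E_p}$.
\end{enumerate}
Set $u\ceqq\tilde u+z+w$. Then $u'+Au=f_0+g=f-Bv$, so $u$ is a strong solution: $z$ is a strong solution by Lemma \ref{lem:L2real}, and $Az\in L^p$ implies $A_0z-Az$ is integrable. Summing the three bounds and taking the infimum over decompositions of $f$ gives the estimate with $\|f\|_{F_p(0,\tau)}$.

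\textbf{Uniqueness.} If $u_1,u_2\in E_p(0,\tau)$ are two strong solutions, then $d=u_1-u_2$ satisfies $d'+Ad=0$ and $d(0)=0$. By \eqref{eq:A mapping r=1}, $Ad\in L^p(0,\tau;X_0)$, hence $d'\in L^p$ and $d\in\MR^p(0,\tau)$. Then \eqref{eq:MRest} forces $d=0$.

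\textbf{Main obstacle.} The delicate point is keeping $C_0$ independent of $\tau$. Concretely, this requires:
\begin{itemize}
\item the embedding $\MR^p(0,\tau)\hookrightarrow C([0,\tau];X_{1-\nicefrac1p,p})$ to be used only on functions vanishing at $0$ (that is, $w$), or on $(0,\infty)$ via the trace-method argument of Lemma \ref{lem:initial mixed scale};
\item the constant in Lemma \ref{lem:L2real} to be monotone in $T$, so restricting to $[0,\tau]$ costs nothing;
\item \eqref{eq:A mapping r=1} to hold on $[0,\tau]$ with the same constant $C_A$.
\end{itemize}
The last point should be checked via an extension of $z$ from $[0,\tau]$ to $[0,T]$ in $E_p$ with norm $\le 1+\varepsilon$. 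If that extension is awkward, the fallback is to extend $g$ by zero to $(0,T)$ and solve for $z$ on all of $[0,T]$ directly, so that \eqref{eq:A mapping r=1} is only ever applied on $[0,T]$. This is legitimate because solutions restricted to $[0,\tau]$ coincide by uniqueness.
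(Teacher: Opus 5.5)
Your proof is correct and follows essentially the paper's transference argument. The $L^1(0,\tau;X_{1-\nicefrac1p,p})$-part, including $-Bv$, is solved with $A_0$ via Lemma \ref{lem:L2real} and then corrected by maximal $L^p$-regularity of $A$ with right-hand side $(A_0-A)(\cdot)$. Your fallback of cutting the data off by $\one_{[0,\tau]}$ and working on $[0,T]$ is exactly how the paper uses \eqref{eq:A mapping r=1} only on $[0,T]$. The one harmless difference is that the paper routes $u_0$ and the $L^p$-part differently: $u_0$ goes into the $A_0$-problem (Lemma \ref{lem:L2real} already handles initial data) and the $L^p$-part into the correction equation. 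That gives two pieces instead of your three and avoids any appeal to Lemma \ref{lem:initial mixed scale}.
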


\begin{proof} 
For uniqueness, let $w\ceqq u_2-u_1$ be the difference of two strong solutions in $E_p(0,\tau)$. 
Then $w'=-Aw$ a.e.\ (see \cite[Lem.\ 2.5.8]{HNVWvolume1}).  
We conclude that $w'=-Aw\in L^p(0,\tau;X_0)$. Since \eqref{eq:A mapping r=1} extends to all subintervals, it follows that $w \in L^p(0,\tau;X_1)\cap W^{1,p}(0,\tau;X_0)$ and maximal $L^p(0,\tau;X_0)$-regularity from Lemma \ref{lem:restriction} yields $w=0$. 

To prove existence, we employ the transference method of \cite[Th.\ 3.13]{AV25survey}. 
Let $A_0$ be as in Assumption \ref{assum:HScase}. Let $f = g+h$ with $g\in L^1(0,\tau;(X_0, X_{1})_{1-\nicefrac1p,p})$ and $h\in L^p(0,\tau;X_0)$. By Lemma \ref{lem:L2real},  there exists a unique strong solution $\tilde{v}\in E_p(0,T)$  to $\tilde{v}'+A_0 \tilde{v} =-\one_{[0,\tau]}Bv+ \one_{[0,\tau]} g$,  $\tilde{v}(0) = u_0$. Moreover,   
\[\|\tilde{v}\|_{E_p(0,\tau)} \leq \|\tilde{v}\|_{E_p(0,T)}\leq C\big(\|u_0\|_{(X_0, X_{1})_{1-\nicefrac1p,p}}+
 \|b\|_{L^{p'}(0,\tau)}\|v\|_{L^p(0,\tau;X_1)}+ \|g\|_{L^1(0,\tau;(X_0, X_{1})_{1-\nicefrac1p,p})}\big),\] 
using that Assumption \ref{assum:HScase}(\ref{it3:HScase}) and H\"older's inequality yield 
\begin{equation}\label{eq: Bv est r=1}
  \|Bv\|_{L^1(0,\tau;(X_0, X_{1})_{1-\nicefrac1p,p})}\leq \|b\|_{L^{p'}(0,\tau)}\|v\|_{L^p(0,\tau;X_1)}. 
\end{equation}
By \eqref{eq:A mapping r=1}, maximal $L^p(0,T;X_0)$-regularity of $A$ and Lemmas \ref{lem:restriction} and \ref{lem:initial}, we can find a unique strong solution $z\in L^p(0,T;X_1)\cap W^{1,p}(0,T;X_0)$ to $z' + A z = (A_0-A)\tilde{v}+\one_{[0,\tau]} h$, $z(0) = 0$,  
and combined with the trace embedding \eqref{eq:traceembedding0T}  and \eqref{eq:A mapping r=1} applied to $\tilde{v}$, 
\begin{align*}
\|z\|_{E_p(0,\tau)}&\leq \|z\|_{E_p(0,T)} 
\\& \lesssim \|z\|_{L^p(0,T;X_1)\cap W^{1,p}(0,T;X_0)}
\\ & \leq M(\|A_0\|_{\calL(X_1,X_0)}+C_A) \|\tilde{v}\|_{E_p(0,T)}+M\|\one_{[0,\tau]}h\|_{L^p(0,T;X_0)}, 
\end{align*}
where $M\ceqq M_{p,0,A}(0,T)$, and the   constant in the second line  is $\tau$- and $T$-independent since $z(0)=0$. 
One can verify that $u \ceqq \tilde{v}+z\in E_p(0,\tau)$ is the required strong solution.
Moreover, $\|u\|_{E_p(0,\tau)}\leq \|\tilde{v}\|_{E_p(0,\tau)} + \|z\|_{E_p(0,\tau)}$, so the claimed estimate follows from the estimates  for $\tilde{v}$ and $z$ above, and by taking the infimum over all $g$ and $h$ as above. 
\end{proof}

By adapting the proof of Theorem \ref{th: well-posed linear}, we obtain the following result.
\begin{proof}[First proof of Theorem \ref{thm:mainHilbert}]
  We can repeat the proof of Theorem \ref{th: well-posed linear}, replacing $\MR_+(s,t)$ by $E_p(s,t)$ and $\x(s,t)$ by $F_p(s,t)$. As a fixed-point space, we use $L^p(s,t;X_1)$ again. 
  Note that \eqref{eq: Bv est r=1} replaces Lemma \ref{lem:SBLp gamma} and Proposition \ref{prop:HS} replaces Proposition \ref{prop: linearized well-posed}. 
  
  With the substitutions above, we have $\mathrm{Ran}(\Phi)\subset E_p(0,\tau)$ in Step 1 and $\mathrm{Ran}(\Phi_n)\subset E_p(0,\tau_{n+1})$ in Step 3, thus we can exploit the uniqueness within $E_p(0,\tau)$ (instead of $\MR_+(0,\tau)$) from Proposition \ref{prop:HS}. Moreover, we note that $E_p(0,t)$ has the property that $v_1\one_{[0,s]}+v_2\one_{(s,t]}\in E_p(0,t)$ when $v_1\in E_p(0,s)$, $v_2\in E_p(s,t)$ and $v_1(s)=v_2(s)$. This property, previously used in the proof of Theorem \ref{th: well-posed linear}, also holds for $E_p(s,t)$. 
\end{proof}
 Alternatively, the proof can be simplified due to the absence of weights in this setting.
  
  \begin{proof}[Second proof of Theorem \ref{thm:mainHilbert}]
  In Step 1, define $\Phi$ on $L^p(0,\tau;X_1)$. Note that \eqref{eq: difference est}--\eqref{eq:aprioriLest} remain valid. 
  
  In Step 3, again let $\tau_{n}$ be such that $\|b\|_{L^{p'}(\tau_{n-1},\tau_{n})} = 1/(2{C}_0)$ if possible, and otherwise set $\tau_n = T$. In the unweighted setting, we can iterate Step 1 using the shifted coefficients $\tilde{A}\ceqq A(\cdot+\tau_n)$, $\tilde{B}\ceqq B(\cdot+\tau_n)$, and $\tilde{f}\ceqq f(\cdot+\tau_n)$. Recall that \eqref{eq:A mapping r=1} is also satisfied on subintervals. Consider on $[0,\tau_{n+1}-\tau_n]$:  
  \[
  \left\{
\begin{aligned}
    \tilde{u}' + \tilde{A}\tilde{u}+\tilde{B}v &= \tilde{f}, \\
    \tilde{u}(0) &= u(\tau_n).
\end{aligned}
\right.
  \]  
Define $\Phi_n$ on $L^p(0,\tau_{n+1}-\tau_n;X_1)$ as $\Phi_n(v) = \tilde{u}$, where $\tilde{u}\in E_p(0,\tau_{n+1}-\tau_n)$ is the unique strong solution to the equation above, which exists by  Lemmas \ref{lem:restriction} and \ref{lem:initial} and Proposition \ref{prop:HS} (with $\tilde{b}\ceqq b(\cdot+\tau_n)$).   A unique fixed point $u^n=\Phi_n(u^n)$ is guaranteed since 
\begin{align*}
\|\Phi_n(v_1)-\Phi_n(v_2)\|_{E_p(0,\tau_{n+1}-\tau_n)}\leq C_0 \|b\|_{L^{p'}(\tau_n,\tau_{n+1})}\|v_1-v_2\|_{L^p(0,\tau_{n+1}-\tau_n;X_{1})}, 
\end{align*}
which holds with the same constant $C_0$ as in Step 1, thanks to Lemmas \ref{lem:restriction} and \ref{lem:initial}. The solution $u$ can  be extended by defining $u=u^n(\cdot-\tau_n)$ on $(\tau_n,\tau_{n+1}]$. 

Moreover, similarly to \eqref{eq:aprioriLest}, one derives
\begin{align*}
\|u\|_{E_p(0,\tau_{n+1})}&\leq \|u\|_{E_p(0,\tau_{n})}+
\|u^n\|_{E_p(0,\tau_{n+1}-\tau_n)} \\
&\leq \|u\|_{E_p(0,\tau_{n})}+2C_0\|u(\tau_n)\|_{(X_0, X_1)_{1-\nicefrac1p,p}}+2C_0\|f\|_{F_p(\tau_n,\tau_{n+1})}, \\
&\leq (1+2C_0)\|u\|_{E_p(0,\tau_n)}+2C_0\|f\|_{F_p(0,T)}. 
\end{align*}
This recursive estimate, together with \eqref{eq:aprioriLest} (now with $E_p(0,\tau)$-norm) yields an estimate ($\tau_N=T$)
\[
\|u\|_{E_p(0,T)} \leq 2C_0(1+2C_0)^{N-1}\|u_0\|_{(X_0,X_1)_{1-\nicefrac1p,p}} +\big[(1+2C_0)^N - 1\big] \|f\|_{F_p(0,T)},
\]
and $N$ can be bounded in terms of $\|b\|_{L^{p'}(0,T)}$ as was done at the end of Step 4.
\end{proof}

\end{document}